\newtheorem{tw}{Theorem}[subsection]
\newtheorem{lm}[tw]{Lemma}
\newtheorem{wn}[tw]{Corollary}
\newtheorem{stw}[tw]{Proposition}
\newenvironment{dow}{\it Proof.\rm}{\hfill $\Box$}
\theoremstyle{definition}
\newtheorem*{df}{Definition}
\newtheorem{uw}[tw]{Remark}
\newtheorem{prz}[tw]{Example}
\newcommand{\BN}{{\mathbb N}}
\newcommand{\BR}{{\mathbb R}}
\newcommand{\VV}{{\mathcal V}}
\newcommand{\TT}{{\mathcal T}}
\newcommand{\FF}{{\mathcal{F}}}
\newcommand{\BB}{{\mathcal{B}}}
\newcommand{\MM}{{\mathcal{M}}}
\newcommand{\BRD}{{\mathbb{R}^{d}}}
\newcommand{\essinf}{\mathop{\mathrm{ess\,inf}}}
\newcommand{\esssup}{\mathop{\mathrm{ess\,sup}}}
\newcommand{\nsubsection}{\setcounter{equation}{0}\subsection}
\begin{document}
\title {Reflected BSDEs on Filtered Probability Spaces}
\author {Tomasz Klimsiak \smallskip\\
{\small Faculty of Mathematics and Computer Science,
Nicolaus Copernicus University} \\
{\small  Chopina 12/18, 87--100 Toru\'n, Poland}\\
{\small e-mail: tomas@mat.uni.torun.pl}}
\date{}
\maketitle
\begin{abstract}
We study  the problem of existence and uniqueness of solutions of
backward stochastic differential equations  with two reflecting
irregular barriers, $L^p$ data and generators satisfying weak
integrability conditions. We deal with equations on general
filtered probability spaces. In case the generator does not depend
on the $z$ variable, we first consider the case  $p=1$ and we only
assume that the underlying filtration satisfies the usual
conditions of right-continuity and completeness. Additional
integrability properties of solutions are established if
$p\in(1,2]$ and the filtration is quasi-continuous. In case the
generator depends on $z$, we assume that $p=2$, the filtration
satisfies the usual conditions and additionally that it is
separable. Our results apply for instance to Markov-type reflected
backward equations driven by general Hunt processes.
\end{abstract}

\footnotetext{{\em MSC2010 subject classifications.} Primary
60H10; Secondary 60H20.}

\footnotetext{{\em Key words and phrases.} Reflected BSDE, general
filtration, L1 data.}

\footnotetext{Research supported by Polish NCN grant no.
2012/07/B/ST1/03508.}

\nsubsection{Introduction}

In the present paper we study  the problem of existence and
uniqueness of solutions of  backward stochastic differential
equations (BSDEs for short) with two reflecting barriers. There is
now an extensive literature on the subject, but unfortunately all
the available results concern equations with underlying filtration
generated by a Wiener process (Brownian filtration) or by a
Poisson random measure and an independent Wiener process
(Brownian-Poisson filtration). In the paper we deal with equations
on general filtered probability spaces. In the case where the
generator of the equation is independent of the $z$ variable we
only assume that the underlying filtration
$\FF=\{\FF_t;t\in[0,T]\}$ satisfies the usual conditions of
right-continuity and completeness. In the general case of
equations with generators depending on $z$ we assume that the
Hilbert space $L^2(\FF_T)$ is separable.  Another dominant feature
of the paper is that we study equations with irregular barriers,
$L^p$ data ($p\in[1,2]$ in case the generator is independent of
$z$ and $p=2$ in the general case) and with generators satisfying
weak regularity and growth assumptions.

In the case of Brownian filtration the theory of reflected BSDEs
with irregular barriers and weak assumptions on the data is quite
well developed. We refer the reader to
\cite{Hamadene,Lepeltier,PengXu} for existence and uniqueness
results for equations with irregular barriers. Reflected BSDEs
with monotone generator satisfying weak growth condition are
studied in \cite{Kl:BSM,Kl:EJP,LepeltierMatoussiXu,RS:SPA},
whereas equations with $L^p$-data and $p\in[1,2]$ in
\cite{HamadenePopier,Kl:BSM,Kl:EJP,RS:SPA}. In the case of the
Brownian-Poisson filtration the only known results concern
reflected BSDEs with c\`adl\`ag barriers, Lipschitz-continuous
generators and $L^2$ data (see
\cite{HamadeneHassani,HamadeneWang}).

Let $(\Omega,\FF=\{\FF_t;t\in [0,T]\},P)$ be a filtered
probability space satisfying the usual conditions. Suppose we are
given an $\FF_T$ measurable random variable $\xi$ (terminal time),
a measurable function $f:\Omega\times
[0,T]\times\mathbb{R}\rightarrow\BR$ (generator) such that
$f(\cdot,y)\in Prog([0,T]\times\Omega)$ and two progressively
measurable processes $L,U$ (barriers) such that $L_t\le U_t$ for
a.e. $t\in [0,T]$. By a solution of the reflected BSDE with data
$\xi,f$ and barriers $U,L$ (RBSDE$(\xi,f,L,U)$) for short) on
$(\Omega,\FF,P)$ we mean a triple $(Y,M,R)$ consisting of an
adapted c\`adl\`ag process $Y$ of Doob's class (D), a local
martingale $M$ with $M_0=0$ and a predictable finite variation
process $R$ with $R_0=0$ such that
\begin{align}
\label{eqi.1} Y_t=\xi+\int_t^T f(r,Y_r)\,dr+\int_t^T dR_r-\int_t^T
dM_r,\quad t\in [0,T],
\end{align}
\begin{equation}
\label{eq1.02} L_t\le Y_t\le U_t\quad \mbox{for a.e. }t\in[0,T]
\end{equation}
and the following minimality condition for $R$ is satisfied: for
every c\`adl\`ag  processes $\hat{L}, \hat{U}$ such that
$L_t\le\hat{L}_t\le Y_t\le\hat{U}_t\le U_t$ for a.e. $t\in [0,T]$
we have
\begin{equation}
\label{eqi.2} \int_0^T (Y_{t-}-\hat{L}_{t-})\,dR^{+}_t =\int_0^T
(\hat{U}_{t-}-Y_{t-})\,dR^{-}_t=0,
\end{equation}
where $R=R^+-R^-$ is the Jordan decomposition  of the measure
$dR$. Condition (\ref{eqi.2}) has been considered in \cite{PengXu}
in the case of reflected BSDEs with Brownian filtration. Note that
the above definition of a solution is similar in spirit to the
definition of a solution of nonreflected BSDEs on general filtered
spaces considered in \cite{KR:JFA,LLQ}. It is well suited for
studying by probabilistic methods partial differential equations
with irregular data (see \cite{KR:JFA,KR:SM}).

In the paper we assume that $f$ is continuous, monotone with
respect to $y$ and satisfies the following mild growth condition
\begin{equation}
\label{eqi.3} E\int_0^T|f(t,0)|\,dt<\infty,\quad
\forall\,{y\in\BR},\,\,\,[0,T]\ni t\mapsto f(t,y)\in L^1(0,T).
\end{equation}
Condition (\ref{eqi.3}) has appeared before in the papers on
nonreflected (see \cite{BDHPS}) and reflected (see
\cite{Kl:BSM,Kl:EJP}) BSDEs with $L^1$ data adapted to the
Brownian filtration. As for the barriers, we merely assume that
they are measurable and satisfy the following Mokobodski
condition: there exists a special semimaringale $X$ with
integrable finite variation part such that
\begin{equation}
\label{eqi.4} L_t\le X_t\le U_t\quad \mbox{for a.e. }t\in [0,T],
\quad E\int_0^T|f(t,X_t)|\,dt<\infty.
\end{equation}
We prove that if $f,L,U$ satisfy conditions (\ref{eqi.3}), (\ref{eqi.4}) and
the data are in $L^1$, i.e. $\xi\in L^1(\FF_T)$ and $\int_0^T|f(\cdot,0)|\,dt\in
L^1(\FF_T)$, then there exists a unique solution $(Y,M,R)$ of
RBSDE$(\xi,f,L,U)$. We also show that under the assumptions
ensuring the existence of a solution of nonreflected BSDE
condition (\ref{eqi.4}) is necessary   for the existence of a
solution of (\ref{eqi.1}) such that $E|R|_T<\infty$. Furthermore,
we show that if the jumps of the barriers are totally inaccessible
and $\FF$ is quasi-left continuous then $R$ is continuous
(reflected BSDEs with such barriers and Poisson-Brownian
filtration are considered in \cite{HamadeneHassani}). Finally, we
show that if the barriers satisfy the standard Mokobodski
condition, i.e. only the first condition in (\ref{eqi.4}) is
satisfied, then the solution still exists but in general $R$ is
not integrable (it may happen that $E|R|^q_T=\infty $ for every
$q>0$, see \cite{Kl:EJP}). In Section 5 we show that under the
additional assumption of quasi-left continuity of the filtration
$\FF$, if the data are $L^p$ integrable for some $p\in (1,2]$,
i.e. $\xi\in L^p(\FF_T)$, $\int_0^T|f(\cdot,0)|\,dt\in L^p(\FF_T)$,
$X\in\mathcal{H}^p$ and $\int_0^T|f(t,X_t)|\,dt\in L^p(\FF_T)$,
then  the solution $(Y,M)$ of (\ref{eqi.1})--(\ref{eqi.2}) belongs
to the space $\mathcal{S}^p\otimes\mathcal{M}^p$.

In the last section of the paper we study BSDEs with generators
possibly depending on the $z$ variable. To deal with such
equations we need some sort of the representation theorem for
square integrable martingales. In the paper we use the
representation by series of stochastic integrals,
 because it  applies to general
filtered spaces. In the context of BSDEs this  type of
representation of martingales has been used in
\cite{CohenElliott,NualartSchoutens}. It should be stressed,
however, that our methods also works for other type of
representations. For instance, one can employ the representation
of \cite{TangLi}, which is commonly used in the case of BSDEs with
the Brownian-Poisson filtration (see Remark \ref{uw5.1}).

To have the representation theorem, we assume additionally that
$L^2(\Omega,\FF_T,P)$ is a separable Hilbert space. It is known
that then there exists an orthogonal sequence $\{M^i\}$ of square
integrable martingales such that each locally square integrable
martingale $N$ admits the representation
\begin{equation}
\label{eqi.5} N_t=N_0+\sum_{i=1}^{\infty}\int_0^t
Z^i_r\,dM^i_r,\quad t\in [0,T]
\end{equation}
for some sequence $\{Z^i\} $ of predictable processes. By a
solution of RBSDE$(\xi,f,L,U)$ we mean a triple $(Y,Z,R)$
consisting of a c\`adl\`ag adapted process $Y\in \mathcal{S}^2$,
predictable process $Z=\{Z^i\}$ satisfying
$P(\sum_{i=1}^{\infty}\int_0^T|Z^i_t|^2\,d\langle
M^i\rangle_t<\infty)=1$ and a predictable finite variation process
$R$ with $R_0=0$, such that (\ref{eq1.02}), (\ref{eqi.2}) hold
true and
\[
Y_t=\xi+\int_t^T f(r,Y_r,Z_r)\,dr+\int_t^T dR_r
-\sum_{i=1}^{\infty}\int_t^T Z^i_r\,dM^i_r,\quad t\in [0,T].
\]
In our main result of the last section we assume that the data are
in $L^2$, i.e. $\xi\in L^2(\FF_T)$ and $\int_0^T|f(\cdot,0,0)|\, dt\in L^2(\FF_T)$,
and that the generator $f$ is monotone with respect to $y$ and
Lipschitz continuous with respect to $z$ (in the appropriate norm
generated by the sequence $\{M^i\}$), satisfies the growth
condition with respect to $y$ similar to (\ref{eqi.3}) and
\[
L_t\le X_t\le U_t\quad \mbox{for a.e. }t\in [0,T], \quad
E\big(\int_0^T|f(t,X_t,0)|\,dt\big)^2<\infty
\]
for some semimartingale $X\in \mathcal{H}^2$. We show that under
these assumptions there exists a unique solution $(Y,Z,R)\in
\mathcal{S}^2\otimes M^2\otimes\mathcal{V}^2$ of
RBSDE$(\xi,f,L,U)$.

We close the presentation of our main results with the following
general remarks.  In the existing literature mainly reflected
BSDEs on spaces equipped with  Brownian or Brownian-Poisson
filtration are considered. One of the reasons is that such a
framework is sufficient for applications of BSDEs to mathematical
finance. In the present paper we improve these  results on RBSDEs
by relaxing assumptions on the generator. Namely, we replace the
Lipschitz continuity of $f$ with respect to $y$ by monotonicity
and as a growth condition we only impose very weak integrability
condition (\ref{eqi.3}). Moreover, if the generator is independent
of $z$, we consider $L^p$ data for $p\in [1,2)$. Reflected BSDEs
on more general filtered probability spaces arise naturally in
applications to partial differential equations involving operators
generated by semi-Dirichlet forms or generalized Dirichlet forms.
The papers \cite{KR:JFA,KR:SM} show that BSDEs provide very
efficient tool for investigating  abstract elliptic equations of the form
\begin{equation}
\label{eq.ii} -Au=f(x,u)+\mu,
\end{equation}
where $\mu$ is a smooth measure and $A$ is a Dirichlet operator.
Similar to (\ref{eq.ii}) parabolic equations are investigated in
\cite{Kl:JFA}.
To study (\ref{eq.ii}) one needs to consider backward equations
with  forward driving process being a general special standard
Markov processes. Our main motivation for studying  BSDEs in an
abstract framework was to cover this class of processes. Also note
that in the whole paper we consider generalized reflected RBSDEs,
i.e. equations of the form (\ref{eqi.1}) perturbed by some finite
variation process $V$. In applications to PDEs we have in mind,
adding $V$ to (\ref{eqi.1}) allows one to study equations with
true measure data ($V$ is then the additive functional of the
forward process in the Revuz correspondence with the measure on
the right-hand side of the equation).

\nsubsection{BSDEs with one reflecting barrier}
\label{sec2}

Let us fix $T>0$ and a stochastic basis $(\Omega, \FF=\{\FF_{t},
t\in[0,T]\},P)$  satisfying the usual conditions of right
continuity and completeness. By $\mathcal{T}$ we denote the set of
all $\FF$ stopping times with values in $[0,T]$. For $s,t\in
[0,T]$ such that $s\le t$ we denote by $\mathcal{T}_{t}$ (resp.
$\mathcal{T}_{s,t}$) the set of all  $\tau\in\mathcal{T}$ such
that $P(\tau\in [t,T])=1$ (resp. $P(\tau\in [s,t])=1$).

By $\MM$ (resp. $\MM_{loc}$) we denote the space of $\FF$
martingales (resp. local $\FF$ martingales). $\MM_0$ (resp.
$\MM^p$ ) is the subspace of $M\in\MM$ consisting of $M$ such that
$M_0=0$ (resp. $E[M]^{p/2}_T<\infty$). By $\VV$ (resp. $\VV^{+}$)
we denote the space of $\FF$ progressively measurable processes of
finite variation (resp. increasing). $\VV_0$ (resp. $\VV^p$) is
the subspace of $\VV$ consisting of $V$ such that $V_0=0$ (resp.
$E|V|^p_T<\infty$). $^{p}\VV$ is the space of predictable
processes in $\VV$. By $\mathcal{S}^{p}$ we denote the space of
$\FF$ progressively measurable processes $Y$ such that
$E\sup_{t\le T}|Y_t|^{p}<\infty$. By $L^p(\FF)$ we denote the
space of $\FF$ progressively measurable processes $X$ such that
$E\int_{0}^T|X_t|^p\,dt<\infty$. $L^p(\FF_T)$ is the space of
$\FF_T$ measurable random variables $X$ such that
$E|X|^p_T<\infty$.

In the rest of the paper $\xi$ is an $\FF_{T}$ measurable random
variable, $L$ is an $\FF$-progressively measurable process,
$V\in\mathcal{V}_0$, $f:\Omega\times[0,T]\times\BR\rightarrow\BR$
is a measurable function such that  $f(\cdot,y)$ is $\FF$
progressively measurable for every $y\in\BR$. We also adopt the
convention that every c\`adl\`ag process $Y$ on $[0,T]$ extends to
$[0,\infty)$  by $Y_t=Y_T,\, t\ge T$.

\begin{df}
We say that a triple of processes $(Y, M, K)$ is a solution of
reflected backward stochastic differential equation with terminal
condition $\xi$, right-hand side $f+dV$ and lower barrier $L$
(RBSDE$(\xi, f+dV, L)$ for short) if
\begin{enumerate}
\item[(a)]$Y$ is $\FF$ adapted c\`adl\`ag process of Doob's class (D),
$M\in\MM_{0,loc}$, $K\in{ } ^p\mathcal{V}^{+}_0$,
\item[(b)]$Y_{t}\ge L_{t}$ for a.e. $t\in[0,T]$,
\item[(c)]for every c\`adl\`ag process $\hat{L}$ such that
$L_{t}\le \hat{L}_{t}\le Y_{t}$ for a.e. $t\in[0, T]$,
\[
\int_{0}^{T}(Y_{t-}-\hat{L}_{t-})\,dK_{t}=0,
\]
\item[(d)] $[0,T]\ni t\rightarrow f(t,Y_{t})\in L^{1}(0,T)$ and
\[
Y_{t}=\xi+\int_{t}^{T}f(r, Y_{r})\,dr+
\int_{t}^{T}dV_r+\int_{t}^{T}dK_{r}-
\int_{t}^{T}dM_{r}, \quad t\in[0, T].
\]
\end{enumerate}
\end{df}

We shall prove a comparison result (and consequently uniqueness)
for solutions of reflected equations 
under the following monotonicity condition.
\begin{enumerate}
\item[(H1)]There is $\mu\in\BR$ such that for a.e.
$t\in[0, T]$ and every $y,y'\in\BR$,
\[
(f(t, y)-f(t, y'))(y-y')\le\mu|y-y'|^{2}.
\]
\end{enumerate}

\begin{stw}
\label{stw1.1} Let $(Y^{i}, M^{i}, K^{i})$ be a solution of
$\mbox{\rm{RBSDE}}(\xi^{i}, f^{i}+dV^{i}, L^{i})$, $i=1, 2$.
Assume that $\xi^{1}\le\xi^{2}$, $L_{t}^{1}\le L_{t}^{2}$ for a.e.
$t\in[0, T]$, $dV^{1}\le dV^{2}$, and either $f^{1}$ satisfies
$\mbox{\rm{(H1)}}$ and $f^{1}(t, Y^{2}_{t})\le f^{2}(t,
Y^{2}_{t})$ for a.e. $t\in[0, T]$ or $f^{2}$ satisfies
$\mbox{\rm{(H1)}}$ and $f^{1}(t, Y^{1}_{t})\le f^{2}(t,
Y^{1}_{t})$ for a.e. $t\in[0, T]$. Then $ Y_{t}^{1}\le Y_{t}^{2},\, t\in[0, T]$.
\end{stw}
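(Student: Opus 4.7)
\emph{Proof plan.} Set $\delta Y=Y^1-Y^2$, $\delta M=M^1-M^2$, $\delta K=K^1-K^2$, $\delta V=V^1-V^2$, $\delta\xi=\xi^1-\xi^2\le 0$. Then $\delta Y$ is a c\`adl\`ag semimartingale of class~(D) satisfying
\[
\delta Y_t=\delta\xi+\int_t^T\big(f^1(r,Y^1_r)-f^2(r,Y^2_r)\big)\,dr+\int_t^T d\delta V_r+\int_t^T d\delta K_r-\int_t^T d\delta M_r,
\]
and the aim is to show $E(\delta Y_t)^+=0$; c\`adl\`ag regularity of $\delta Y$ then promotes this to $Y^1\le Y^2$ pointwise on $[0,T]$.

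The strategy is to apply the Meyer--It\^o (Tanaka) formula for $x\mapsto x^+$ to $\delta Y$ on $[t\wedge\tau_n,\tau_n]$, where $(\tau_n)$ reduces $\delta M$. The resulting local-time contribution $\tfrac12(L^0_{\tau_n}-L^0_{t\wedge\tau_n})$ and the jump-defect term $\sum_{s}\{(\delta Y_s)^+-(\delta Y_{s-})^+-\mathbf{1}_{\{\delta Y_{s-}>0\}}\Delta\delta Y_s\}$ are both nonnegative, so after substituting the equation for $d\delta Y$ one is left with
\begin{align*}
(\delta Y_{t\wedge\tau_n})^+ &\le (\delta Y_{\tau_n})^+ +\int_{t\wedge\tau_n}^{\tau_n}\mathbf{1}_{\{\delta Y_{r-}>0\}}\big[f^1(r,Y^1_r)-f^2(r,Y^2_r)\big]\,dr\\
&\quad+\int_{t\wedge\tau_n}^{\tau_n}\mathbf{1}_{\{\delta Y_{r-}>0\}}\,d\delta V_r+\int_{t\wedge\tau_n}^{\tau_n}\mathbf{1}_{\{\delta Y_{r-}>0\}}\,d\delta K_r-\int_{t\wedge\tau_n}^{\tau_n}\mathbf{1}_{\{\delta Y_{r-}>0\}}\,d\delta M_r.
\end{align*}
The $d\delta V$-integral is nonpositive since $dV^1\le dV^2$. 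For the reflection term, the key observation is that $\hat L^1:=Y^1\wedge Y^2$ is c\`adl\`ag and, using $L^1\le L^2\le Y^2$ together with $L^1\le Y^1$ a.e., satisfies $L^1\le\hat L^1\le Y^1$ a.e., so condition~(c) applied to $K^1$ with this $\hat L^1$ yields
\[
\int_0^T(\delta Y_{r-})^+\,dK^1_r=\int_0^T(Y^1_{r-}-\hat L^1_{r-})\,dK^1_r=0;
\]
hence $\mathbf{1}_{\{\delta Y_{r-}>0\}}\,dK^1_r\equiv0$, and combined with $dK^2\ge0$ the $d\delta K$-integral is nonpositive. For the generator, in the first case one splits $f^1(r,Y^1_r)-f^2(r,Y^2_r)=[f^1(r,Y^1_r)-f^1(r,Y^2_r)]+[f^1(r,Y^2_r)-f^2(r,Y^2_r)]$: the second bracket is nonpositive by hypothesis, while (H1) bounds the first by $\mu(\delta Y_r)^+$ on $\{\delta Y_r>0\}$. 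The second case is symmetric.

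Taking expectations eliminates the martingale term and yields $E(\delta Y_{t\wedge\tau_n})^+\le E(\delta Y_{\tau_n})^+ +\mu\int_t^T E(\delta Y_r)^+\,dr$. The class-(D) property of $Y^1,Y^2$ together with $(\delta Y_T)^+=(\delta\xi)^+=0$ allows one to send $n\to\infty$, and the backward Gronwall lemma then gives $E(\delta Y_t)^+=0$. The genuinely delicate point is the reflection step: since the barriers $L^i$ are only progressively measurable (in particular not assumed c\`adl\`ag), they cannot be fed directly into condition~(c); the trick is to choose a test process built exclusively from the c\`adl\`ag processes $Y^1,Y^2$, namely $\hat L^1=Y^1\wedge Y^2$, and the admissibility inequality $L^1\le\hat L^1$ relies crucially on the standing hypothesis $L^1\le L^2$. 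Everything else is a routine adaptation of standard reflected-BSDE arguments.
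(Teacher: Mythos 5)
Your proposal is correct and follows essentially the same route as the paper's proof: the Tanaka--Meyer formula applied to $(Y^1-Y^2)^+$, the choice of the test process $\hat L=Y^1\wedge Y^2$ (admissible precisely because $L^1\le L^2\le Y^2$) to kill the $dK^1$ contribution on $\{Y^1_{-}>Y^2_{-}\}$ via condition (c), the monotonicity (H1) to control the generator difference, and localization plus the class (D) property followed by Gronwall's lemma. No substantive differences to report.
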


\begin{dow}
Set $Y=Y^{1}-Y^{2}$, $M=M^{1}-M^{2}$, $K=K^{1}-K^{2}$. By the
assumptions and the Tanaka-Meyer formula, for every
$\tau\in\mathcal{T}$ we have
\begin{align}
\label{eq1.1} \nonumber Y_{\tau}^{+}&\le Y_{\tau}^{+}
+\int_{t}^{\tau}\mathbf{1}_{\{Y_{r-}^{1}>Y_{r-}^{2}\}}\,
(f^1(r,Y_r^1)-f^2(r, Y^2_r))\,dr
+\int_{t}^{\tau}\mathbf{1}_{\{Y_{r-}^1>Y^2_{r-}\}}d(V^{1}-V^{2})_{r}\\
&\quad+\int_{t}^{\tau}\mathbf{1}_{\{Y_{r-}^{1}>Y_{r-}^{2}\}}\,dK_r
\nonumber
-\int_{t}^{\tau}\mathbf{1}_{\{Y_{r-}^{1}>Y_{r-}^{2}\}}\,dM_r\\
&\le Y_{\tau}^{+}+\mu\int_{t}^{\tau}Y_{r}^{+}\,dr
+\int_{t}^{\tau}\mathbf{1}_{\{Y_{r-}^{1}>Y_{r-}^{2}\}}\,dK_r^1
-\int_{t}^{\tau}\mathbf{1}_{\{Y_{r-}^{1}>Y_{r-}^{2}\}}\,dM_r.
\end{align}
Observe that $L_{t}^{1}\le Y_{t}^{1}\wedge Y_{t}^{2}\le Y_{t}^{1}$
for a.e. $t\in[0, T]$ and
\[
\int_{t}^{\tau}\mathbf{1}_{\{Y_{r-}^{1}>Y_{r-}^{2}\}}\,dK_r^1=
\int_{t}^{\tau}\mathbf{1}_{\{Y_{r-}^{1}>Y_{r-}^{2}\}}
|Y_{r-}^{1}-Y_{r-}^{2}|^{-1}(Y_{r-}^{1}-Y_{r-}^{1}\wedge Y_{r-}^{2})\,dK_r^1.
\]
Hence
\[
\int_{t}^{\tau}\mathbf{1}_{\{Y_{r-}^{1}>Y_{r-}^{2}\}}\,dK_r^1\le
0, \quad t\in [0, \tau]
\]
by condition (c) of the definition of the solution of
RBSDE$(\xi^{1},f^{1}+dV^{1}, L^{1})$. From (\ref{eq1.1}) it
therefore follows that
\[
Y_{t}^{+}\le Y_{\tau}^{+}+\mu\int_{t}^{\tau} Y_{r}^{+}\,dr-
\int_{t}^{\tau}\mathbf{1}_{\{Y_{r-}>0\}}\,dM_r, \quad t\in[0, \tau].
\]
Let $\{\tau_{n}\}\subset\mathcal{T}$ be a fundamental sequence for
the martingale $M$. Then by the above inequality,
\[
E Y_{t\wedge\tau_{n}}^{+}\le E Y_{\tau_{n}}^{+}+ \mu
E\int_{t\wedge\tau_{n}}^{\tau_n}Y_{r}^{+}\,dr, \quad t\in[0, T].
\]
Since $Y$ is of class (D), letting $n\rightarrow\infty$ gives
\[
EY_{t}^{+}\le \mu\int_{t}^{T}EY_{r}^{+}\,dr, \quad t\in[0,T],
\]
so applying Gronwall's lemma yields the desired result.
\end{dow}

\begin{wn}
\label{wn1.1} Let assumption $\mbox{\rm{(H1)}}$ hold. Then there
exists at most one solution of $\mbox{\rm{RBSDE}}(\xi,f+dV,L)$.
\end{wn}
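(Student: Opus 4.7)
The statement follows quickly from Proposition \ref{stw1.1}. The plan is to take two solutions $(Y^{1}, M^{1}, K^{1})$ and $(Y^{2}, M^{2}, K^{2})$ of the \emph{same} RBSDE$(\xi, f+dV, L)$, and apply the comparison proposition twice, swapping the roles of the two triples. Since we are dealing with one and the same data $(\xi, f, V, L)$, the hypotheses $\xi^{1}\le\xi^{2}$, $L^{1}\le L^{2}$, $dV^{1}\le dV^{2}$, and $f^{1}(t,Y^{2}_{t})\le f^{2}(t,Y^{2}_{t})$ are trivially satisfied as equalities, while the monotonicity condition $\mbox{\rm(H1)}$ is assumed. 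The first application of Proposition \ref{stw1.1} therefore yields $Y^{1}_{t}\le Y^{2}_{t}$ for every $t\in[0,T]$; exchanging the labels and applying the proposition again yields the reverse inequality. Hence $Y^{1}=Y^{2}$.

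It remains to recover equality of the martingale and reflecting parts. Subtracting the two BSDE relations in condition (d) of the definition and using $Y^{1}=Y^{2}$ together with the equality of $\xi$, $f(\cdot,Y_{\cdot})$ and $V$, I obtain
\[
(K^{1}_{T}-K^{1}_{t})-(K^{2}_{T}-K^{2}_{t})=(M^{1}_{T}-M^{1}_{t})-(M^{2}_{T}-M^{2}_{t}),\quad t\in[0,T].
\]
Setting $N:=M^{1}-M^{2}$ and $A:=K^{1}-K^{2}$, this says $A=N$ (both start at $0$). Now $N\in\MM_{0,loc}$ while $A\in{}^{p}\VV_{0}$, so $N$ is a local martingale which is at the same time a predictable process of finite variation starting from zero. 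By the standard uniqueness-of-decomposition result for special semimartingales, any such process is indistinguishable from $0$; hence $M^{1}=M^{2}$ and $K^{1}=K^{2}$.

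The argument is almost entirely a bookkeeping exercise on top of Proposition \ref{stw1.1}, and no substantial obstacle arises. The only point worth verifying carefully is that the comparison proposition really does apply in both directions with identical data: this is where the assumption $\mbox{\rm(H1)}$ on the common generator $f$ is used, and where one must observe that condition (c) in the definition of a solution — which is invoked in the proof of Proposition \ref{stw1.1} only for the first barrier — is available for either solution we designate as ``number one.''
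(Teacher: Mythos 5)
Your proof is correct and is exactly the derivation the paper intends: the corollary is stated without proof as an immediate consequence of Proposition \ref{stw1.1}, obtained by applying the comparison twice with the roles of the two solutions exchanged, and your completion of the argument (identifying $K^1-K^2$ with the local martingale $M^1-M^2$ and invoking the fact that a predictable finite-variation local martingale null at $0$ vanishes) is the standard and correct way to recover uniqueness of $M$ and $K$ from uniqueness of $Y$.
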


Let us recall that a filtration $\FF$ is called quasi-left
continuous if  for every sequence $\{\tau_{n}\}\subset\mathcal{T}$
and $\tau\in\mathcal{T}$, if $\tau_{n}\nearrow\tau$ then
$\bigvee_{n\in\BN}\,\FF_{\tau_{n}}=\FF_{\tau}$.

\begin{stw}
\label{stw1.2} Assume that $\FF$ is quasi-left continuous,
$\xi\in L^1 (\FF_{T})$, $V\in\mathcal{V}^{1}_0$ and $L$ is a
c\`adl\`ag process of class $\mbox{\rm{(D)}}$. Set
\begin{equation}
\label{eq2.03} Y_{t}=\esssup_{\tau\in \mathcal{T}_{t}}
E\left(\int_{t}^{\tau}dV_{r}+L_{\tau}\mathbf{1}_{\{\tau<T\}}
+\xi\mathbf{1}_{\{\tau=T\}}|\FF_{t}\right).
\end{equation}
Then for every predictable $\tau\in\mathcal{T}$ such that
$\tau>0$,
\begin{equation}
\label{eq2.0B} Y_{\tau-}=L_{\tau-} \vee (Y_{\tau}+\Delta V_{\tau}).
\end{equation}
\end{stw}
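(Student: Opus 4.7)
The plan is to realise $U_t:=Y_t+V_t$ as the Snell envelope of a c\`adl\`ag, class-(D) reward process and then to establish (\ref{eq2.0B}) by a direct dynamic-programming argument along an announcing sequence of $\tau$, making essential use of the quasi-left continuity of $\FF$. Since $V_t$ is $\FF_t$-measurable, (\ref{eq2.03}) rewrites as
\[
U_t=\esssup_{\sigma\in\TT_t}E\big(H_\sigma\big|\FF_t\big), \qquad H_\sigma:=V_\sigma+L_\sigma\mathbf{1}_{\{\sigma<T\}}+\xi\mathbf{1}_{\{\sigma=T\}}.
\]
Because $L$ is c\`adl\`ag of class (D), $V\in\VV^1_0$ and $\xi\in L^1(\FF_T)$, $H$ extends to a c\`adl\`ag process of class (D) and $U$ admits a c\`adl\`ag regularisation which is the smallest c\`adl\`ag supermartingale dominating $H$; in particular $U\ge H$ pointwise.

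Fix an announcing sequence $\tau_n\uparrow\tau$ with $\tau_n<\tau$, so that $\bigvee_n\FF_{\tau_n}=\FF_\tau$ by quasi-left continuity of $\FF$, and hence $\FF_{\tau-}=\FF_\tau$. The easy bound $U_{\tau-}\ge U_\tau\vee H_{\tau-}$ follows from (a) $E(U_\tau|\FF_{\tau_n})\le U_{\tau_n}$ (supermartingale property), which upon letting $n\to\infty$ yields $U_\tau\le U_{\tau-}$, and (b) $U_{\tau_n}\ge H_{\tau_n}$ combined with c\`adl\`ag regularity of $H$, which yields $U_{\tau-}\ge H_{\tau-}$.

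The main obstacle is the reverse inequality. For any $\sigma\in\TT_{\tau_n}$ I split at $\tau$: on the $\FF_\tau$-measurable event $\{\sigma\ge\tau\}$ we have $\sigma\vee\tau=\sigma$, whence $E(\mathbf{1}_{\{\sigma\ge\tau\}}H_\sigma|\FF_\tau)=\mathbf{1}_{\{\sigma\ge\tau\}}E(H_{\sigma\vee\tau}|\FF_\tau)\le\mathbf{1}_{\{\sigma\ge\tau\}}U_\tau$; on $\{\sigma<\tau\}$, c\`adl\`ag regularity of $H$ gives the $\sigma$-uniform bound $H_\sigma\le H_{\tau-}+S_n$ with $S_n:=\sup_{s\in[\tau_n,\tau)}|H_s-H_{\tau-}|$. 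Since $\mathbf{1}_{\{\sigma\ge\tau\}}U_\tau+\mathbf{1}_{\{\sigma<\tau\}}H_{\tau-}\le U_\tau\vee H_{\tau-}$ pointwise, adding, taking conditional expectations and then essential suprema over $\sigma$ gives
\[
U_{\tau_n}\le E\big((U_\tau\vee H_{\tau-})\big|\FF_{\tau_n}\big)+E(S_n|\FF_{\tau_n}).
\]
Now $S_n\to0$ a.s.~and is dominated by $2\sup_{t\le T}|H_t|\in L^1$ (class (D)), so $E(S_n|\FF_{\tau_n})\to0$ in $L^1$ and hence a.s.~along a subsequence; the upward martingale convergence theorem gives $E(U_\tau\vee H_{\tau-}|\FF_{\tau_n})\to U_\tau\vee H_{\tau-}$ a.s., and therefore $U_{\tau-}\le U_\tau\vee H_{\tau-}$.

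Combining, $U_{\tau-}=U_\tau\vee H_{\tau-}=U_\tau\vee(L_{\tau-}+V_{\tau-})$; subtracting $V_{\tau-}$ and using $V_\tau-V_{\tau-}=\Delta V_\tau$ gives
\[
Y_{\tau-}=L_{\tau-}\vee(Y_\tau+\Delta V_\tau),
\]
which is (\ref{eq2.0B}). The technical crux is the uniform-in-$\sigma$ splitting estimate above, which is what allows the essential supremum to be interchanged with the limit $n\to\infty$ without invoking the Doob--Meyer decomposition of $U$.
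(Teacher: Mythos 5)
Your overall architecture is essentially the paper's: absorb $V$ into the reward, identify $U=Y+V$ with the Snell envelope of a c\`adl\`ag class-(D) process $H$, obtain $U_{\tau-}\ge U_\tau\vee H_{\tau-}$ from the supermartingale property together with quasi-left continuity, and prove the reverse inequality by a dynamic-programming split of each $\sigma\in\TT_{\tau_n}$ at $\tau$. (The paper performs the same reduction to $V=0$, derives the easy inequality via the Doob--Meyer decomposition and the absence of martingale jumps at predictable times, and then runs the same splitting argument at a deterministic time, noting that the predictable case is analogous.) There is, however, a genuine gap at exactly the point you label the technical crux. You justify $E(S_n\mid\FF_{\tau_n})\to0$ in $L^1$ by dominating $S_n=\sup_{s\in[\tau_n,\tau)}|H_s-H_{\tau-}|$ by $2\sup_{t\le T}|H_t|$ and asserting that this supremum is integrable ``by class (D)''. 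That is false: class (D) means the family $\{H_\sigma:\sigma\in\TT\}$ is uniformly integrable and does \emph{not} imply $E\sup_{t\le T}|H_t|<\infty$. For instance, a uniformly integrable martingale $L_t=E(\eta\mid\FF_t)$ with $\eta\in L^1$ but $\eta\notin L\log L$ is c\`adl\`ag of class (D) with non-integrable running maximum (Doob's $L\log L$ theorem), and nothing in the hypotheses excludes such an $L$. Since the $S_n$ need not even be integrable, $S_n\downarrow 0$ a.s.\ gives you nothing about $ES_n$, and the dominated-convergence step collapses.

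The gap is repairable, but the repair is precisely the device the paper deploys and your write-up bypasses. Fix $\varepsilon>0$ and set $B_n^\varepsilon=\{S_n\le\varepsilon\}$, so that $P((B_n^\varepsilon)^c)\to0$. On $B_n^\varepsilon\cap\{\sigma<\tau\}$ you have $H_\sigma\le H_{\tau-}+\varepsilon$, while the contribution of $(B_n^\varepsilon)^c$ is controlled not by an integrable envelope but by the uniform integrability of $\{H_\sigma\}_{\sigma\in\TT}$: one bounds the error term by $E\bigl(\esssup_{\sigma\in\TT_{\tau_n}}(|H_\sigma|+|H_{\tau-}|)\mathbf{1}_{(B_n^\varepsilon)^c}\mid\FF_{\tau_n}\bigr)$, whose expectation is at most $\sup_{\sigma\in\TT}E(|H_\sigma|+|H_{\tau-}|)\mathbf{1}_{(B_n^\varepsilon)^c}\to0$; this gives $L^1$ convergence of the error and hence a.s.\ convergence along a subsequence, after which your limit passage goes through and yields $U_{\tau-}\le U_\tau\vee H_{\tau-}+\varepsilon$ for every $\varepsilon>0$. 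This is exactly the role of the sets $B^s_\varepsilon$ and of display (2.8) in the paper's proof; class (D) enters only through this uniform-integrability-on-vanishing-sets mechanism, never through integrability of the running supremum. The remainder of your argument (the measurability of $\{\sigma\ge\tau\}$ with respect to $\FF_\tau$, the bound on $\{\sigma\ge\tau\}$ via $\sigma\vee\tau\in\TT_\tau$, and the use of $\bigvee_n\FF_{\tau_n}=\FF_\tau$ in both directions) is correct.
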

\begin{dow}
For simplicity we assume that $\tau$ is constant. The proof in the
general case goes through as in case $\tau\equiv\mbox{const}$,
with some obvious   changes.

First observe that the process $\bar{Y}_{t}\equiv Y_t
+\int_{0}^{t}dV_{r}$, $t\in[0,T]$, is a supermartingale and if we
put
\[
\bar{L}_t =L_{t}+\int_{0}^{t}dV_{r}, \quad
\bar{\xi}=\xi+\int_{0}^{T}dV_r
\]
then
\[
\bar{Y}_{t-}=\bar{L}_{t-}\vee\bar{Y}_{t} \quad\mbox{iff}\quad
Y_{t-}=L_{t-}\vee(Y_{t}+\Delta V_{t}).
\]
Therefore without loss of generality we may and will assume that
$V=0$. Then $Y$ is a supermartingale. By the assumptions of the
proposition, $Y$ is of class (D). Therefore there exist
$K\in\mathcal{V}^{1,+}_0$ and $M\in\MM_0$ such that
\[
Y_{t}=Y_{0}-K_{t}+M_{t}, \quad t\in[0, T].
\]
Since $\FF$ is quasi-left continuous, $\Delta M_{t}=0$, $P$-a.s.
for every $t\in[0,T]$. Hence  $\Delta Y_{t}\le 0$ for every
$t\in(0,T]$, which implies that for every $t\in(0,T]$,
\begin{equation}
\label{eq1.2} Y_{t}\le Y_{t-}\,.
\end{equation}
On the other hand, $Y_{t}\ge L_{t}$, $t\in[0, T]$. Therefore
$L_{t-}\le Y_{t-}$\,, $t\in(0,T]$. From this and (\ref{eq1.2}) it
follows that for every $t\in(0,T]$,
\[
Y_{t}\vee L_{t-}\le Y_{t-}\,.
\]
We are going to show the opposite inequality. To this end, let us
fix $t\in(0, T]$ and $s\in [0, t)$. By known properties of Snell's
envelope (see \cite{ElKaroui}),
\begin{align*}
Y_{s}&=\esssup_{\tau\in\mathcal{T}_{s, t}} E(L_\tau
\mathbf{1}_{\{\tau<t\}}+Y_{t}\mathbf{1}_{\{\tau=t\}}|\FF_{s})\\
&= E(\esssup_{\tau\in\mathcal{T}_{s, t}} (L_\tau
\mathbf{1}_{\{\tau<t\}}+Y_{t}\mathbf{1}_{\{\tau=t\}})|\FF_{s})\\&
\le E(\esssup_{\tau\in\mathcal{T}_{s, t}} (L_\tau
\mathbf{1}_{\{\tau<t\}}+(Y_{t}\vee
L_{t-})\mathbf{1}_{\{\tau=t\}})|\FF_{s}) \equiv U_{s}.
\end{align*}
Observe that
\begin{equation}
\label{eq1.3} Y_{s}\le U_{s}, \quad s\in(0, t)
\end{equation}
and
\[
U_{s}=E(\esssup_{\tau\in\mathcal{T}_{s, t}} (L_\tau
\mathbf{1}_{\{\tau<t\}}+(Y_{t}\vee L_{t-})
\mathbf{1}_{\{\tau=t\}})|\FF_{s})\ge E(Y_{t}\vee L_{t-}|\FF_{s}),
\]
which implies that
\[
E(Y_{t}\vee L_{t-}|\FF_{t-})\le\liminf_{s\rightarrow t^{-}}U_{s}.
\]
Put
\[
\hat{L}_{r}=L_{r}\mathbf{1}_{\{r<t\}}+(Y_{r}\vee L_{r-})\mathbf{1}_{\{r=t\}},
\quad r\in[0, t].
\]
With this notation,
\[
U_{s}=E(\esssup_{\tau\in\mathcal{T}_{s, t}}\hat{L}_{\tau}|\FF_{s}).
\]
For $\varepsilon>0$ set
\[
B_{\varepsilon}^{s}=\{\sup_{s\le r\le t} \hat{L}_{r}\le\hat{L}_t
+\varepsilon\}.
\]
Since $\hat{L}$ is c\`adl\`ag and has nonnegative jump at $r=t$,
\begin{equation}
\label{eq1.4}
\lim_{s\rightarrow t^{-}}B_{\varepsilon}^{s}=\Omega.
\end{equation}
We have
\begin{align}
\label{eq2.06} E(\esssup_{\tau\in\mathcal{T}_{s,
t}}\hat{L}_{\tau}|\FF_s) &=E(\esssup_{\tau\in\mathcal{T}_{s,
t}}\hat{L}_{\tau} \mathbf{1}_{B^{s}_{\varepsilon}}|\FF_s)
+E(\esssup_{\tau\in\mathcal{T}_{s, t}}\hat{L}_{\tau}
\mathbf{1}_{(B^{s}_{\varepsilon})^c}|\FF_s)\nonumber\\
&\le E((\hat{L}_{t}+\varepsilon)
\mathbf{1}_{B^{s}_{\varepsilon}}|\FF_s)
+E(\esssup_{\tau\in\mathcal{T}_{s, t}}\hat{L}_{\tau}
\mathbf{1}_{(B^{s}_{\varepsilon})^c}|\FF_s).
\end{align}
Since $\hat{L}$ is of class (D), it follows from (\ref{eq1.4})
that
\begin{equation}
\label{eq2.08} E(E(\esssup_{\tau\in\mathcal{T}}|\hat{L}_{\tau}|
\mathbf{1}_{(B^{s}_{\varepsilon})^{\varepsilon}}|\FF_s))=
\sup_{\tau\in\mathcal{T}}E|\hat{L}_{\tau}|
\mathbf{1}_{(B^{s}_{\varepsilon})^c}\rightarrow 0
\end{equation}
as $s\rightarrow t^{-}$. Letting $s\rightarrow t^{-}$ in
(\ref{eq2.06}) and using (\ref{eq1.4}), (\ref{eq2.08}) yields
\[
\limsup_{s\rightarrow t^{-}}U_{s}\le\varepsilon+
E(\hat{L}_{t}|\FF_{t-}).
\]
Since the above inequality holds for  every $\varepsilon>0$ and
the filtration $\FF$ is quasi-left continuous, it follows that
$\limsup_{s\rightarrow t^{-}}U_{s}\le\hat{L}_t$, which when
combined with (\ref{eq1.3}) gives
\[
L_{t-}\vee Y_{t}=\lim_{s\rightarrow t^{-}}U_{s}.
\]
By this and (\ref{eq1.3}),
\[
Y_{t-}=\lim_{s\rightarrow t^{-}}Y_{s}\le\lim_{s\rightarrow
t^{-}}U_{s} =L_{t-}\vee Y_{t},
\]
and the proof is complete.
\end{dow}
\medskip

\begin{prz}
The conclusion of Proposition \ref{stw1.2} does not hold if the
quasi-continuity of filtration is omitted from the hypotheses. To
see this let us consider a random variable $\xi$ such that
$P(\xi=5)=P(\xi=1)=1/2$. Let $T>1$, $\mathcal{G}=\sigma(\xi)$.
Define $\FF=\{\FF_t,\,t\in[0,T]\}$ as follows: $\FF_t=\{\emptyset,
\Omega\}$ for $t\in[0,1)$ and $\FF_t=\mathcal{G}$ for $t\in[1,T]$.
Then the filtration $\FF$ is right-continuous but is not
quasi-left continuous. Let $V=0$ and  $L_t=2$ for $t\in[0, 1)$,
$L_t=0$ for $t\in[1,T]$. Since $Y^n_t\ge Y_t^0\ge L_t$ for
$t\in[0, T]$, it follows that
\[
Y_t^n=Y_t, \quad t\in[0, T],
\]
where $Y$ is defined by (\ref{eq2.03}) and $Y_t^n$ is a solution
of (\ref{eq1.7}) with $\FF,\xi,L$ defined above and $f=V^n=0$.
Moreover,
\[
Y_t= \left\{
\begin{array}{l}E\xi, \quad t\in[0, 1),\smallskip \\
\xi, \quad t\in[1,T],
\end{array}
\right.
\]
from which it follows that $Y_{1-}\neq L_{1-}\vee Y_1$.
\end{prz}

\begin{uw}
\label{uw2.A}  That Proposition \ref{stw1.2} is not true if we
drop the assumption of quasi-left continuity of filtration stems
from the fact that the jumps of $Y$ in predictable times can be
produced by its martingale part.
\end{uw}

Our Proposition \ref{stw1.2} follows from a more general
Proposition \ref{stw2.EK}  proved in \cite{ElKaroui}. We have
decided to provide Proposition \ref{stw1.2} here to make our
presentation self-contained in the important case of quasi-left
continuous filtration. The second reason is that the proof of
Proposition \ref{stw1.2} is much simpler than that of Proposition
\ref{stw2.EK}.

\begin{stw}
\label{stw2.EK} Under the assumptions of Proposition \ref{stw1.2}
but with $\FF$ only satisfying the usual conditions,
\begin{equation}
\label{eq2.B} Y_{t-}=L_{t-}\vee({}^pY_t+{}^pV_t-V_{t-}), \quad
t\in[0,T],
\end{equation}
where ${}^pY$ (resp. ${}^pV$) denotes the predictable projection
of  the process $Y$ (resp. $V$).
\end{stw}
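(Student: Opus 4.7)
The approach is to follow the scheme of the proof of Proposition \ref{stw1.2}, modifying only those steps in which quasi-left continuity played a role; the net effect is that $Y_\tau$ gets replaced by its predictable projection ${}^pY_\tau$ at predictable times. As a preliminary reduction, define $\bar Y_t=Y_t+V_t$, $\bar L_t=L_t+V_t$ and $\bar\xi=\xi+V_T$. Writing out (\ref{eq2.03}) and using $V_\tau=V_\tau\mathbf{1}_{\{\tau<T\}}+V_T\mathbf{1}_{\{\tau=T\}}$, one sees that $\bar Y$ is the Snell envelope of the shifted data $(\bar L,\bar\xi)$, hence a c\`adl\`ag supermartingale of class (D). Since $\bar L_{t-}=L_{t-}+V_{t-}$ and ${}^p\bar Y_t={}^pY_t+{}^pV_t$, identity (\ref{eq2.B}) reduces to $\bar Y_{t-}=\bar L_{t-}\vee{}^p\bar Y_t$; so I may and will assume $V=0$.

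By the predictable section theorem applied to the two predictable processes on each side of (\ref{eq2.B}), it suffices to prove $Y_{\tau-}=L_{\tau-}\vee{}^pY_\tau$ at every predictable stopping time $\tau>0$. Fix such a $\tau$ with an announcing sequence $\tau_n\nearrow\tau$, $\tau_n<\tau$. Applying Doob-Meyer to the class (D) supermartingale $Y$ yields $Y_t=Y_0-K_t+M_t$ with $K\in{}^p\VV^{+}_0$ and $M\in\MM_0$. Since $K$ is predictable and ${}^pM_\tau=M_{\tau-}$, one has ${}^pY_\tau=Y_0-K_\tau+M_{\tau-}$ and therefore $Y_{\tau-}={}^pY_\tau+\Delta K_\tau\ge{}^pY_\tau$. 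Combined with $Y_{\tau-}\ge L_{\tau-}$ (both processes c\`adl\`ag and $L\le Y$), this gives the easy inequality $Y_{\tau-}\ge L_{\tau-}\vee{}^pY_\tau$.

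For the reverse inequality I mimic the Snell envelope argument of Proposition \ref{stw1.2}. The dynamic programming principle yields
\[
Y_{\tau_n}=\esssup_{\sigma\in\TT_{\tau_n,\tau}}E\bigl(L_\sigma\mathbf{1}_{\{\sigma<\tau\}}+Y_\tau\mathbf{1}_{\{\sigma=\tau\}}\,\big|\,\FF_{\tau_n}\bigr).
\]
The new ingredient is the identity $E(Y_\tau\mathbf{1}_{\{\sigma=\tau\}}|\FF_{\tau_n})=E({}^pY_\tau\mathbf{1}_{\{\sigma=\tau\}}|\FF_{\tau_n})$, which is obtained by conditioning first on $\FF_{\tau-}$; its validity rests on $\{\sigma=\tau\}=\bigcap_m\{\sigma>\tau_m\}\in\FF_{\tau-}$ (each $\{\sigma>\tau_m\}\in\FF_{\tau_m}\subset\FF_{\tau-}$ because $\tau_m<\tau$) and on ${}^pY_\tau=E(Y_\tau|\FF_{\tau-})$, which is the defining property of the predictable projection at a predictable stopping time. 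Introducing the c\`adl\`ag process on $[0,\tau]$
\[
\hat L_r=L_r\mathbf{1}_{\{r<\tau\}}+(L_{\tau-}\vee{}^pY_\tau)\mathbf{1}_{\{r=\tau\}},
\]
which has a nonnegative jump at $\tau$ (since $\hat L_{\tau-}=L_{\tau-}\le\hat L_\tau$), one thus obtains $Y_{\tau_n}\le E(\esssup_{\sigma\in\TT_{\tau_n,\tau}}\hat L_\sigma|\FF_{\tau_n})$.

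The splitting argument (\ref{eq2.06})-(\ref{eq2.08}) now carries over with $B^s_\varepsilon$ replaced by $A^n_\varepsilon=\{\sup_{\tau_n\le r\le\tau}\hat L_r\le\hat L_\tau+\varepsilon\}\uparrow\Omega$. Using the martingale convergence $\FF_{\tau_n}\nearrow\FF_{\tau-}$ (valid because $\tau$ is predictable) together with the $\FF_{\tau-}$-measurability of $\hat L_\tau$, I get $\lim_n E(\hat L_\tau\mathbf{1}_{A^n_\varepsilon}|\FF_{\tau_n})=\hat L_\tau=L_{\tau-}\vee{}^pY_\tau$, while the $(A^n_\varepsilon)^c$-term vanishes exactly as in (\ref{eq2.08}) by the class (D) property of $\hat L$. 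Letting first $n\to\infty$ and then $\varepsilon\to 0$ gives $Y_{\tau-}=\lim_n Y_{\tau_n}\le L_{\tau-}\vee{}^pY_\tau$. The main obstacle is the conditioning replacement $Y_\tau\mapsto{}^pY_\tau$: identifying $\{\sigma=\tau\}$ as an $\FF_{\tau-}$-event is precisely what bypasses the quasi-left continuity of Proposition \ref{stw1.2}, where the simplification ${}^pY_\tau=Y_\tau$ was free via $\Delta M_\tau=0$.
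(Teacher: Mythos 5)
Your proof is correct, but it is genuinely different from what the paper does: the paper offers no argument at all for Proposition \ref{stw2.EK} and simply cites \cite[Proposition 2.34]{ElKaroui}, remarking elsewhere that the proof of the general case is considerably more involved than that of Proposition \ref{stw1.2}. What you have done instead is show that the paper's own ``simple'' proof of Proposition \ref{stw1.2} survives without quasi-left continuity once two standard facts are injected: (i) in the easy direction, the Doob--Meyer decomposition with \emph{predictable} $K$ and the identities ${}^pM_\tau=M_{\tau-}$, ${}^pY_\tau=E(Y_\tau|\FF_{\tau-})$ give $Y_{\tau-}-{}^pY_\tau=\Delta K_\tau\ge 0$ (replacing the paper's use of $\Delta M_\tau=0$); and (ii) in the hard direction, the observation that $\{\sigma=\tau\}=\bigcap_m\{\sigma>\tau_m\}\in\FF_{\tau-}$ lets you condition through $\FF_{\tau-}$ and replace $Y_\tau$ by ${}^pY_\tau$ in the dynamic programming formula, after which the Snell-envelope localization argument of Proposition \ref{stw1.2} runs verbatim with $\hat L_\tau=L_{\tau-}\vee{}^pY_\tau$ and $\FF_{\tau_n}\nearrow\FF_{\tau-}$. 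This buys a self-contained proof and makes precise exactly where quasi-left continuity was used (only to identify ${}^pY_\tau$ with $Y_\tau$), at the cost of invoking the predictable section theorem and the dynamic programming principle at stopping-time horizons rather than deterministic ones. One caveat: your splitting step inherits the paper's slight imprecision in (\ref{eq2.06})--(\ref{eq2.08}), where an essential supremum over stopping times is interchanged with a conditional expectation; this is harmless (the needed direction is an inequality, and class (D) controls the tail term), but if you want the argument fully airtight you should perform the splitting for each fixed $\sigma$ before taking the essential supremum.
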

\begin{proof}
See \cite[Proposition 2.34]{ElKaroui}.
\end{proof}
\medskip

In the rest of this section $(Y^{n}, M^{n})$ stands for the
solution of the BSDE
\begin{equation}
\label{eq1.7} Y^{n}_{t}=\xi+\int_{t}^{T}f(r, Y_{r}^{n})\,dr
+\int_{t}^{T}dK^n_r+\int_{t}^{T}dV_{r}^{n}
-\int_{t}^{T}dM_{r}^{n}, \quad t\in[0, T],
\end{equation}
where
\[
K^n_t=\int_0^tn(Y^n_r-L_r)^-\,dr, \quad t\in[0, T]
\]
and $\{V^{n}\}\subset \mathcal{V}_0$ are processes such that
$dV^{n}\le dV^{n+1}$, $n\ge 1$, and $V_{t}^{n}\rightarrow V_{t}$,
$t\in[0, T]$.

\begin{df}
We say that a pair $(Y, M)$ is a supersolution of BSDE$(\xi, f+dV)$
if there exists a process $C\in\mathcal{V}^{1, +}_0$ such that
$(Y, M)$ is a solution of BSDE$(\xi, f+dV+dC)$.
\end{df}
Let us consider the following hypotheses.
\begin{enumerate}
\item[(H2)]$[0, T]\ni t \mapsto f(t,y)\in L^1(0,T)$ for every $y\in\BR$,
\item[(H3)]$\BR\ni y\mapsto f(t, y)$ is
continuous for a.e. $t\in[0, T]$.
\end{enumerate}

\begin{lm}
\label{lm1.1} Assume that $Y_{t}^{n}\nearrow Y_{t}$, $t\in[0, T]$,
$Y$ is a c\`adl\`ag process of class $\mbox{\rm{(D)}}$,
$f_{Y}\equiv f(\cdot, Y)\in L^1 (\FF)$, $V\in\mathcal{V}^1_0$ and
$\mbox{\rm{(H1)--(H3)}}$ are satisfied. Then $Y$ is the smallest
supersolution of  $\mbox{\rm{BSDE}}(\xi, f_{Y}+dV)$ such that
$Y_{t}\ge L_{t}$ for a.e. $t\in[0, T]$.
\end{lm}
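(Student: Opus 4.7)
\emph{Proof plan.} The argument has three parts: (i) $Y\ge L$ a.e., (ii) $(Y,M)$ admits the supersolution decomposition, and (iii) $Y$ minimises among competing supersolutions. All three rely on a uniform $L^1$ dominant for $\{|f(\cdot,Y^n)|\}$. Since $Y^1\le Y^n\le Y$ and both $(Y^1,M^1)$ and $(Y^n,M^n)$ are class-(D) BSDE solutions (so $f(\cdot,Y^1)\in L^1(\FF)$ by the standard BSDE integrability), applying the one-sided estimate (H1) to the pairs $(Y^n,Y)$ and $(Y^n,Y^1)$ yields
\[
|f(r,Y^n_r)|\le g(r):=|f(r,Y_r)|+|f(r,Y^1_r)|+3|\mu|(|Y_r|+|Y^1_r|),
\]
with $g\in L^1(\FF)$ because $Y$ and $Y^1$ are of class (D). Together with (H3), this delivers pointwise convergence $f(r,Y^n_r)\to f(r,Y_r)$ dominated by $2g$, the workhorse of all subsequent limits.

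For (i), take expectation at $t=0$ in (\ref{eq1.7}) (localising $M^n$ if necessary) to isolate
\[
EK^n_T=EY^n_0-E\xi-E\int_0^T f(r,Y^n_r)\,dr-EV^n_T.
\]
Every term on the right is bounded uniformly in $n$ ($EY^n_0\uparrow EY_0<\infty$; the $f$-integral is controlled by $E\int_0^T g<\infty$; and $EV^1_T\le EV^n_T\le EV_T$), so $EK^n_T\le C$. Hence $E\int_0^T(Y^n_r-L_r)^-\,dr\le C/n\to 0$, and the monotonicity $(Y^n-L)^-\searrow(Y-L)^-$ then forces $(Y-L)^-=0$ a.e.

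For (ii), the penalized equation (\ref{eq1.7}) rewrites as
\[
\bar Y^n_t:=Y^n_t+\int_0^t f(r,Y^n_r)\,dr+V^n_t=Y^n_0+M^n_t-K^n_t,
\]
a càdlàg supermartingale. Monotone convergence in $Y^n_t,V^n_t,Y^n_0$ and pathwise dominated convergence in the $f$-integral yield $\bar Y^n_t\to\bar Y_t:=Y_t+\int_0^t f(r,Y_r)\,dr+V_t$ a.s. Since $|\bar Y^n_t|\le|Y_t|+|Y^1_t|+\int_0^T g+|V_t|+|V^1_t|\in L^1$, the supermartingale inequality $E[\bar Y^n_t\mid\FF_s]\le\bar Y^n_s$ passes to the limit. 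Thus $\bar Y$ is a càdlàg supermartingale of class (D); its Mertens decomposition $\bar Y_t=Y_0+M_t-C_t$, with $M\in\MM_0$ uniformly integrable and $C\in\VV^{1,+}_0$ predictable, rearranges to precisely the supersolution identity for $(Y,M)$ of BSDE$(\xi,f_Y+dV)$ with associated $C$.

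For (iii), let $(Y',M')$ be a competing supersolution, $C'\in\VV^{1,+}_0$ the associated predictable increasing process, and $Y'\ge L$. Tanaka--Meyer for $X^+$ applied to $X=Y^n-Y'$ (with $X_T=0$ and non-negative jump/local-time remainders) gives
\[
(Y^n_t-Y'_t)^+\le -\int_t^T\mathbf{1}_{\{Y^n_{r-}>Y'_{r-}\}}\,d(Y^n_r-Y'_r).
\]
Expanding $d(Y^n-Y')$: on $\{Y^n_r>Y'_r\}\subset\{Y^n_r>L_r\}$ (using $Y'\ge L$) the density $n(Y^n_r-L_r)^-$ of $dK^n$ vanishes, so the $dK^n$ contribution is $0$; the $-dC'$ and $dV^n-dV$ pieces are non-positive after the indicator; and the local martingale part is removed by localising with a fundamental sequence of $M^n-M'$ and invoking class-(D) uniform integrability. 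What remains is
\[
E(Y^n_t-Y'_t)^+\le E\int_t^T|f(r,Y^n_r)-f(r,Y_r)|\,dr\;\longrightarrow\;0
\]
as $n\to\infty$ by dominated convergence (dominant $2g$). Monotonicity $(Y^n-Y')^+\nearrow(Y-Y')^+$ then delivers $Y\le Y'$. The main obstacle is the construction of the uniform dominant $g$: since (H1) is only one-sided, $Y^n$ must be sandwiched between two processes whose generator evaluations lie in $L^1(\FF)$; once $g$ is secured, the Tanaka--Meyer, Mertens, and localisation steps are routine.
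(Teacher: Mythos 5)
Your proof is correct. Parts (i) and (ii) — the uniform $L^1$ dominant obtained by sandwiching $f(\cdot,Y^n)-\mu Y^n$ between $f(\cdot,Y)-\mu Y$ and $f(\cdot,Y^1)-\mu Y^1$, the passage to the limit in the supermartingales $Y^n+\int_0^\cdot f(r,Y^n_r)\,dr+V^n$, and the Doob--Meyer decomposition of the limit — coincide with the paper's argument; you additionally make explicit the derivation of $Y\ge L$ a.e.\ from $\sup_n EK^n_T<\infty$, which the paper leaves implicit at this point (it appears only later, in the proof of Theorem \ref{tw1.1}). The minimality step is where you genuinely diverge. The paper introduces an auxiliary penalized sequence $(\bar Y^n,\bar M^n)$ for the \emph{frozen} generator $f(\cdot,Y)$, obtains $\bar Y^n\le\hat Y$ from the comparison principle of \cite[Proposition 2.1]{KR:JFA} (applicable because the competing supersolution, lying above $L$, solves the penalized equation with an extra increasing process), and then must identify the limit $\bar Y$ with $Y$; that identification is itself a Tanaka--Meyer estimate on $|Y^n-\bar Y^n|$, where the two penalization terms cancel because $y\mapsto n(y-L)^-$ is nonincreasing, leaving exactly $E\int|f(r,Y^n_r)-f(r,Y_r)|\,dr\to0$. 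You instead run Tanaka--Meyer once, directly on $(Y^n-Y')^+$, killing the $dK^n$ contribution by noting that its density vanishes on $\{Y^n>Y'\}\subset\{Y^n>L\}$ and the $dC'$ and $d(V^n-V)$ contributions by sign, and you land on the same quantity $E\int|f(r,Y^n_r)-f(r,Y_r)|\,dr\to0$. Your route avoids both the auxiliary sequence and its identification with $Y$, and is self-contained in that it does not invoke the external comparison result, at the price of reproving that comparison by hand; both arguments ultimately rest on the same dominated-convergence estimate, so nothing is lost.
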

\begin{dow}
Put
\[
\tilde{Y}_{t}^{n}=Y_t^n+\int_{0}^{t}f(r, Y^{n}_r)\,dr+
\int_{0}^{t}dV_{r}^n, \quad t\in[0, T].
\]
Observe that $\tilde{Y}^{n}$ is a supermartingale of class (D) and
that by $\mbox{\rm{(H1)--(H3)}}$, $\tilde{Y}_t^n
\rightarrow\tilde{Y}_t$, $t\in[0, T]$, where
\[
\tilde{Y}_{t}=Y_t+\int_{0}^{t}f(r, Y_{r})\,dr +\int_{0}^{t}dV_{r},
\quad t\in[0, T].
\]
Since $\tilde{Y}$ is also a supermartingale of class (D), there
exist $M\in\MM_0$ and $K\in\mathcal{V}^{1,+}_0$ such that
\[
\tilde{Y}_{t}=\tilde{Y}_{0}+K_{t}+M_{t}, \quad t\in[0,T].
\]
Hence
\[
Y_t=\xi+\int_{t}^{T}f(r, Y_{r})\,dr+\int_{t}^{T}dV_{r}+
\int_{t}^{T}dK_{r}-\int_{t}^{T}dM_{r}, \quad t\in[0, T],
\]
i.e. $(Y, M)$ is a supersolution of BSDE$(\xi, f_{Y}+dV)$. Let
$(\hat{Y}, \hat{M})$ be a supersolution of BSDE$(\xi, f_{Y}+dV)$
such that $L_{t}\le \hat{Y}_t$ for a.e. $t\in[0, T]$. By the
definition of a supersolution there exists $C\in\mathcal{V}^{1,
+}_0$ such that
\[
\hat{Y}_t=\xi+\int_{t}^{T}f(r, Y_{r})\,dr+\int_{t}^{T}dV_{r}
+\int_{t}^{T}dC_{r}-\int_{t}^{T}d\hat{M}_{r}, \quad t\in[0, T].
\]
Let $(\bar{Y}^{n}, \bar{M}^{n})$ be a solution of the BSDE
\[
\bar{Y}_t^n=\xi+\int_{t}^{T}f(r, Y_{r})\,dr+\int_{t}^{T}dV_{r}
+\int_{t}^{T}n(\bar{Y}_r^n -L_{r})^-\,dr
-\int_{t}^{T}d\bar{M}_{r}^n, \quad t\in[0,T].
\]
Since
\begin{align*}
Y_t=\xi&+\int_{t}^{T}f(r, Y_{r})\,dr+\int_{t}^{T}dV_{r}+
\int_{t}^{T}n(Y_{r}-L_r)^- \,dr+\int_{t}^{T}dV_{r}\\&+
\int_{t}^{T}dK_{r}-\int_{t}^{T}dM_{r}, \quad t\in[0, T],
\end{align*}
it follows from \cite[Proposition 2.1]{KR:JFA} that
$\bar{Y}^n_t\le Y_t$, $t\in[0,T]$. Arguing as in the case of the
sequence $\{Y^{n}\}$ we show that there exist
$\bar{K}\in\mathcal{V}^{1, +}_0$ and a c\`adl\`ag process
$\bar{Y}$ of class (D) (since $\bar{Y}^n\le Y$) such that
\[
\bar{Y}_t=\xi+\int_{t}^{T}f(r, Y_{r})\,dr+\int_{t}^{T}d\bar{K}_{r}
+\int_{t}^{T}dV_r-\int_{t}^{T}d\bar{M}_r, \quad t\in[0,T]
\]
and $\bar{Y}_t^n \nearrow\bar{Y}_t$ for every $t\in[0, T]$. By
\cite[Proposition 2.1]{KR:JFA}, $\bar{Y}_t^n \le\hat{Y}_t$,
$t\in[0, T]$, because
\[
\hat{Y}_t=\xi+\int_{t}^{T}f(r, Y_{r})\,dr+\int_{t}^{T}dV_{r}
+\int_{t}^{T}n(\hat{Y}_r -L_r)^-dr+\int_{t}^{T}dC_{r}-
\int_{t}^{T}d\hat{M}_{r}, \quad t\in[0, T].
\]
Thus
\begin{equation}
\label{eq1.8}
\bar{Y}_t \le \hat{Y}_t, \quad t\in[0, T].
\end{equation}
Now we will show that $\bar{Y}_t=Y_t$, $t\in[0, T]$. To this end,
let us set
\[
\bar{K}_t^n= \int_{0}^{t} n(\bar{Y}_r^n-L_r)^-\,dr, \quad
U^n_{t-}=\widehat{\mbox{sgn}}(Y^n_{t-}-\bar{Y}^n_{t-}),
\]
where $\hat x=\frac{x}{|x|}\mathbf{1}_{\{x\neq0\}}$. By the
Tanaka-Meyer formula, for every $\tau\in\mathcal{T}$ we have
\begin{align*}
|Y_t^n-\bar{Y}_t^n|&\le |Y_{\tau}^{n}-\bar{Y}_{\tau}^n|+
\int_{t}^{\tau}U_{r-}^{n}\,d(K^n-\bar{K}^n)_{r}+
\int_{t}^{\tau}U_{r-}^{n}(f(r, Y_{r}^{n})-f(r, Y_r))\,dr\\
&\quad- \int_{t}^{\tau}U_{r-}^n\,d(M^n-\bar{M}^n)_r \le
|Y_{\tau}^n-\bar{Y}_\tau^n|+ \int_{t}^{\tau}|f(r, Y_{r}^{n})-f(r,Y_r)|\,dr\\
&\quad- \int_{t}^{\tau}U_{r-}^n\,d(M^n-\bar{M}^n)_r, \quad
t\in[0,\tau].
\end{align*}
Let $\{\tau_{k}\}\subset\mathcal{T}$ be a fundamental sequence for
$M^n-\bar{M}^n$. Since $Y^n-\bar{Y}^n$ is of class (D), replacing
$\tau$ by $\tau_{k}$ in the above inequality, taking the
expectations and then letting $k\rightarrow\infty$  we get
\begin{equation}
\label{eq2.07} E|Y^n_t-\bar{Y}^n_t|\le
E\int_{t\wedge\tau}^{\tau}|f(r,Y_{r}^{n})-f(r, Y_r)|\,dr, \quad
t\in[0,T].
\end{equation}
By (H1),
\[
f(r, Y_r)-\mu Y_r\le f(r, Y_r^n)-\mu Y_r^n \le f(r,
Y_r^1)-\mu Y_r^1,
\]
so applying the Lebesgue dominated convergence theorem shows that
the right-hand side of (\ref{eq2.07}) tends to zero. Therefore
$\bar{Y}_t =Y_t$, $t\in[0,T]$, which when combined with
(\ref{eq1.8}) implies that $Y_t \le\hat{Y}_t$, $t\in[0,T]$.
\end{dow}

\begin{wn}
\label{wn1.2} Under the assumptions of Lemma \ref{lm1.1},
\[
Y_{t}=\esssup_{\tau\in\mathcal{T}_{t}}E\Big(\int_{t}^{\tau}f(r,Y_r)\,dr
+\int_{t}^{\tau}dV_r+\hat{L}_\tau\mathbf{1}_{\{\tau<T\}}
+\xi\mathbf{1}_{\{\tau<T\}}|\FF_t\Big)
\]
for every c\`adl\`ag process $\hat{L}$ such that
$L_t\le\hat{L}_t\le Y_t$ for a.e. $t\in[0,T]$.
\end{wn}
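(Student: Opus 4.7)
Denote by $Z_t$ the right--hand side of the claimed formula (I read the second indicator as $\mathbf{1}_{\{\tau=T\}}$). I would establish $Y_t=Z_t$ by proving the two inequalities separately, using as the only nontrivial ingredient the conclusion of Lemma~\ref{lm1.1} that $Y$ is the smallest supersolution of $\text{BSDE}(\xi,f_Y+dV)$ dominating $L$.

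For the inequality $Z_t\le Y_t$, I invoke Lemma~\ref{lm1.1}: there exists $C\in\mathcal{V}^{1,+}_0$ such that
\[
Y_s=Y_\tau+\int_s^\tau f(r,Y_r)\,dr+\int_s^\tau dV_r+\int_s^\tau dC_r-\int_s^\tau dM_r,\qquad 0\le s\le\tau\le T.
\]
Replacing $\tau$ by $\tau\wedge\tau_k$ for a fundamental sequence $\{\tau_k\}$ of $M$, taking conditional expectations with respect to $\FF_t$, and passing to the limit using the class $(\mbox{D})$ property of $Y$ together with integrability of $f_Y$ and $V$, I obtain
\[
Y_t=E\Big(Y_\tau+\int_t^\tau f(r,Y_r)\,dr+\int_t^\tau dV_r+\int_t^\tau dC_r\,\Big|\,\FF_t\Big),\qquad \tau\in\mathcal{T}_t.
\]
Dropping the nonnegative $\int_t^\tau dC_r$ and using $Y_\tau\ge \hat L_\tau$ on $\{\tau<T\}$ together with $Y_T=\xi$, I get $Y_t\ge E(\int_t^\tau f(r,Y_r)\,dr+\int_t^\tau dV_r+\hat L_\tau\mathbf{1}_{\{\tau<T\}}+\xi\mathbf{1}_{\{\tau=T\}}\,|\,\FF_t)$, and the essential supremum over $\tau\in\mathcal{T}_t$ yields $Y_t\ge Z_t$.

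For the reverse inequality $Y_t\le Z_t$, I would show that $Z$ (together with a suitable martingale) is itself a supersolution of $\text{BSDE}(\xi,f_Y+dV)$ satisfying $Z\ge L$ a.e., after which Lemma~\ref{lm1.1} forces $Y\le Z$. To this end, set
\[
\tilde Z_t=Z_t+\int_0^t f(r,Y_r)\,dr+\int_0^t dV_r,\qquad G_t=\int_0^t f(r,Y_r)\,dr+\int_0^t dV_r+\hat L_t\mathbf{1}_{\{t<T\}}+\xi\mathbf{1}_{\{t=T\}}.
\]
Then $\tilde Z_t=\esssup_{\tau\in\mathcal{T}_t}E(G_\tau\,|\,\FF_t)$ is the Snell envelope of $G$. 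Since $G$ is c\`adl\`ag, bounded below by an integrable process (using $\hat L\ge L$ with $L$ bounded below by $X$ in Mokobodski's condition, or by applying the comparison $\hat L\ge L$ directly through the lower supersolution) and bounded above by a process of class (D) (via $\hat L\le Y$), $\tilde Z$ admits a c\`adl\`ag version which is a supermartingale of class (D). The Doob--Meyer decomposition in the Mertens form provides $\tilde K\in\mathcal{V}^{1,+}_0$ (predictable) and $\tilde M\in\MM_0$ with $\tilde Z_t=\tilde Z_0-\tilde K_t+\tilde M_t$. Subtracting $\int_0^t f(r,Y_r)\,dr+\int_0^t dV_r$ and using $Z_T=\xi$ gives
\[
Z_t=\xi+\int_t^T f(r,Y_r)\,dr+\int_t^T dV_r+(\tilde K_T-\tilde K_t)-(\tilde M_T-\tilde M_t),
\]
so $(Z,\tilde M)$ is a supersolution of $\text{BSDE}(\xi,f_Y+dV)$. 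Taking $\tau=t$ in the definition of $Z_t$ yields $Z_t\ge \hat L_t\ge L_t$ a.e. The minimality in Lemma~\ref{lm1.1} then gives $Y_t\le Z_t$, completing the proof.

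The main obstacle is the construction of the Doob--Meyer decomposition for $\tilde Z$: one must verify that the Snell envelope of $G$ is c\`adl\`ag, of class (D), and that the compensator belongs to $\mathcal{V}^{1,+}_0$. This is routine in the quasi-left continuous setting once one controls both sides of $\hat L$ by processes of class (D), which is guaranteed by $L\le \hat L\le Y$ together with $Y$ being of class (D) and the Mokobodski-type control assumed throughout this section.
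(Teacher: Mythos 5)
Your proposal is correct and follows essentially the same route as the paper: both arguments rest on the minimality statement of Lemma \ref{lm1.1} together with the identification of the shifted process $\tilde Y_t=Y_t+\int_0^t f(r,Y_r)\,dr+\int_0^t dV_r$ as the Snell envelope of $\tilde L_t=\hat L_t+\int_0^t f(r,Y_r)\,dr+\int_0^t dV_r$ (with terminal value $\tilde\xi$); the paper simply cites the classical Snell-envelope facts, whereas you re-derive the two inequalities by hand (and you correctly read the second indicator as $\mathbf{1}_{\{\tau=T\}}$). One small remark: the class-(D) control needed for your Doob--Meyer step requires no Mokobodski-type condition (this is the one-barrier section); it follows from the sandwich $E(G_T|\FF_t)\le \tilde Z_t\le \tilde Y_t$, the upper bound being exactly your already-established first inequality.
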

\begin{dow}
From Lemma \ref{lm1.1} it is clear  that the process $Y$ is the
smallest supersolution of BSDE$(\xi, f_{Y}+dV)$ such that $Y_t\ge
\hat{L}_t$, $t\in[0, T]$. From this we conclude that $\tilde{Y}$
defined in the proof of Lemma \ref{lm1.1} is the smallest
supermartingale with the property that
$\tilde{Y}_T=\tilde{\xi}\equiv\xi+\int_{0}^{T}f(r,Y_r)\,dr
+\int_{0}^{T}dV_r$ majorizing the process $\tilde{L}_t=\hat{L}_t
+\int_{0}^{t}f(r,Y_r)\,dr+\int_{0}^{t}dV_r$. Therefore
\[
\tilde{Y}_t=\esssup_{\tau\in\mathcal{T}_{t}} E(\tilde{L}_\tau
\mathbf{1}_{\{\tau<T\}}
+\tilde{\xi}\mathbf{1}_{\{\tau=T\}}|\FF_t),
\]
from which the desired result immediately follows.
\end{dow}

\begin{wn}
\label{wn1.3} Under the assumptions of Lemma \ref{lm1.1},
\[
Y_{t-}=\hat{L}_{t-}\vee(^pY_t+{ }^pV_t-V_{t-}), \quad t\in[0, T].
\]
for any c\`adl\`ag process $\hat{L}$  such that
$L_t\le\hat{L}_t\le Y_t$ for a.e. $t\in[0,T]$.
\end{wn}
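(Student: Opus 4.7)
The strategy is to reduce the statement to Proposition \ref{stw2.EK} after absorbing the generator and the perturbation $V$ into a single finite variation process, and then to exploit that the added piece is absolutely continuous and therefore contributes nothing to the jump discrepancy.

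First I would fix a càdlàg process $\hat{L}$ with $L_t\le\hat{L}_t\le Y_t$ for a.e.\ $t\in[0,T]$ and introduce
\[
\tilde{V}_t=\int_0^t f(r,Y_r)\,dr+V_t,\qquad t\in[0,T].
\]
Since $f_Y\in L^1(\FF)$ and $V\in\VV^1_0$, the process $\tilde V$ belongs to $\VV^1_0$. By Corollary \ref{wn1.2},
\[
Y_t=\esssup_{\tau\in\TT_t}E\Big(\int_t^\tau d\tilde V_r+\hat L_\tau\mathbf{1}_{\{\tau<T\}}+\xi\mathbf{1}_{\{\tau=T\}}\,|\,\FF_t\Big),
\]
so $Y$ is precisely the Snell envelope (\ref{eq2.03}) associated with the data $(\xi,\hat L,\tilde V)$. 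The hypotheses of Proposition \ref{stw2.EK} are then met: $\xi\in L^1(\FF_T)$, $\tilde V\in\VV^1_0$, and $\hat L$ is càdlàg and of class (D) since it is sandwiched between the càdlàg processes $L$ and $Y$ both of which are of class (D).

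Applying Proposition \ref{stw2.EK} to this setting gives
\[
Y_{t-}=\hat L_{t-}\vee\big({}^pY_t+{}^p\tilde V_t-\tilde V_{t-}\big),\qquad t\in[0,T].
\]
It remains to check that ${}^p\tilde V_t-\tilde V_{t-}={}^pV_t-V_{t-}$. The process $t\mapsto\int_0^t f(r,Y_r)\,dr$ is continuous (hence $\tilde V_{t-}=\int_0^t f(r,Y_r)\,dr+V_{t-}$) and, being continuous and adapted with integrable total variation, coincides with its predictable projection, so ${}^p\tilde V_t=\int_0^t f(r,Y_r)\,dr+{}^pV_t$. Subtracting, the absolutely continuous contribution cancels and we recover the claimed identity.

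The only potentially delicate step is the application of Proposition \ref{stw2.EK}: one has to verify that $\hat L$ inherits the (D) property and that $\tilde V$ has integrable total variation. Both are immediate from the standing hypotheses of Lemma \ref{lm1.1}, so no further work is required.
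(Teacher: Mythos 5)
Your argument is correct and coincides with the paper's: the published proof reads, in its entirety, that the identity follows from Corollary \ref{wn1.2} and Proposition \ref{stw2.EK}, and you have merely made explicit the absorption of $\int_0^\cdot f(r,Y_r)\,dr$ into the finite variation part and the cancellation ${}^p\tilde V_t-\tilde V_{t-}={}^pV_t-V_{t-}$ coming from the continuity and predictability of the absolutely continuous term. One small inaccuracy: Lemma \ref{lm1.1} does not assume $L$ to be c\`adl\`ag or of class (D), so the class (D) property of $\hat L$ should be deduced from $\hat L\le Y$ with $Y$ of class (D) (with the lower bound handled exactly as it implicitly is in Corollary \ref{wn1.2}) rather than from sandwiching between $L$ and $Y$.
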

\begin{dow}
Follows immediately from Corollary \ref{wn1.2} and Proposition
\ref{stw2.EK}.
\end{dow}

\begin{lm}
\label{lm1.2} Assume that $Y$ is a supermartingale of class
$\mathcal{S}^{2}$ admitting the decomposition
\[
Y_t=Y_0-K_t+M_t, \quad t\in[0,T]
\]
for some $K\in\, ^p\mathcal{V}_0^+$, $M\in\MM_0$. Then there exists
$c>0$ (not depending on $Y$) such that
\[
E[Y]_{T}+E|K_T|^2 \le cE\sup_{t\le T}|Y_t|^2.
\]
\end{lm}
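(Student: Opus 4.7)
My plan is to bound $E|K_T|^2$ first via a Garsia-type inequality that exploits the predictability of $K$, and then to deduce the bound on $E[Y]_T$ by applying It\^o's formula to $Y^2$. Throughout I localize with a reducing sequence $\{\tau_n\}\subset\mathcal{T}$ for $M$ chosen so that $M^{\tau_n}\in\MM^2$; from the Doob--Meyer identity $K_{\tau_n}=Y_0-Y_{\tau_n}+M_{\tau_n}$ this already ensures $EK_{\tau_n}^2<\infty$, which is precisely what one needs to divide through in the Garsia step. At the end I pass to $n\to\infty$ via Fatou's lemma.

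For the bound on $E|K_T|^2$, using ${}^pM=M_-$ (the standard predictable projection of a martingale) I first obtain
\[
E(K_{\tau_n}-K_{s-}\mid\FF_{s-})=Y_{s-}-E(Y_{\tau_n}\mid\FF_{s-})\quad\text{for }s\le\tau_n,
\]
whose absolute value is dominated by $2X_{s-}$ with $X_t:=E(\sup_{r\le T}|Y_r|\mid\FF_t)$. Since $K$ (and hence $K^{\tau_n}$) is predictable, the identity $E\int H\,dK=E\int{}^pH\,dK$ valid for nonnegative measurable $H$ applied to $H_s=K_{\tau_n}-K_{s-}$ yields
\[
E\int_0^{\tau_n}(K_{\tau_n}-K_{s-})\,dK_s\le 2E(X^* K_{\tau_n}),
\]
where $X^*:=\sup_{t\le T}X_t$. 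On the other hand, It\^o's formula applied to $K^2$ gives $\int_0^{\tau_n}(K_{\tau_n}-K_{s-})\,dK_s=\tfrac12(K_{\tau_n}^2+[K]_{\tau_n})\ge\tfrac12 K_{\tau_n}^2$, so Cauchy--Schwarz combined with Doob's $L^2$ inequality $EX^{*2}\le 4E\sup_{t\le T}|Y_t|^2$ yields $EK_{\tau_n}^2\le c_1 E\sup_{t\le T}|Y_t|^2$. Fatou then gives $E|K_T|^2\le c_1 E\sup_{t\le T}|Y_t|^2$.

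For $E[Y]_T$, It\^o's formula applied to $Y^2$ with $dY=-dK+dM$ gives
\[
[Y]_{\tau_n}=Y_{\tau_n}^2-Y_0^2+2\int_0^{\tau_n}Y_{s-}\,dK_s-2\int_0^{\tau_n}Y_{s-}\,dM_s.
\]
The stochastic integral against $M^{\tau_n}$ is a true martingale because $|Y_{s-}|\le\sup_{t\le T}|Y_t|\in L^2$ and $M^{\tau_n}\in\MM^2$. Taking expectations and applying Cauchy--Schwarz together with the bound on $EK_{\tau_n}^2$ from the previous step gives $E[Y]_{\tau_n}\le E\sup_{t\le T}|Y_t|^2+2(E\sup_{t\le T}|Y_t|^2\cdot EK_{\tau_n}^2)^{1/2}\le c_2 E\sup_{t\le T}|Y_t|^2$, and Fatou completes the proof.

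The substantive step is the bound on $E|K_T|^2$: it hinges on the identity $E\int H\,dK=E\int{}^pH\,dK$, available precisely because $K$ is predictable. Without this predictability, the right Garsia argument would require passing to a compensator of $K$ first and $K_T$ would not in general lie in $L^2$. The remainder is a routine combination of It\^o's formula, Cauchy--Schwarz and Doob's inequality.
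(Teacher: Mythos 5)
Your proof is correct, and for the crucial estimate on $E|K_T|^2$ it takes a genuinely different route from the paper's. The paper squares the identity $K_\tau=Y_0-Y_\tau+M_\tau$ and uses Doob's $L^2$-inequality to get $E|K_T|^2\le c_1(E\sup_{t\le T}|Y_t|^2+E[Y]_T)$, bounds $E[Y]_T$ via the It\^o--Meyer formula by $E\sup_{t\le T}|Y_t|^2$ plus small multiples of $E|K_T|^2$ and $E[Y]_T$, and then closes the loop by an absorption argument whose a priori finiteness is secured by first truncating $Y$ from above; predictability of $K$ enters only through $E[Y]_T=E[M]_T+E[K]_T$. You instead obtain $E|K_T|^2\le cE\sup_{t\le T}|Y_t|^2$ in one stroke by the Garsia--Neveu device: the martingale identity $E(K_{\tau_n}-K_{s-}\mid\FF_{s-})=Y_{s-}-E(Y_{\tau_n}\mid\FF_{s-})$, the projection duality $E\int H\,dK=E\int{}^pH\,dK$ (where predictability of $K$ is used), and the elementary inequality $\int_0^{\tau_n}(K_{\tau_n}-K_{s-})\,dK_s\ge\frac12K_{\tau_n}^2$. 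This buys you a proof with no circular absorption step and no truncation of $Y$, and it scales immediately to $L^p$ moments of $K_T$ for any $p\ge1$, which the paper's quadratic bookkeeping does not; the paper's route is more elementary in that it never invokes predictable projections. Two points you should make explicit: (i) a localizing sequence with $M^{\tau_n}\in\MM^2$ does exist here, but this uses that $K$, being predictable increasing, is locally bounded, so that $M=(Y-Y_0)+K$ is locally dominated in $L^2$ by $2\sup_{t\le T}|Y_t|+n$; (ii) in the final Fatou step $K_{\tau_n}$ may converge only to $K_{T-}$, but the jump at $T$ is harmless because $\Delta K_T$ is $\FF_{T-}$-measurable and the same projection identity at $s=T$ gives $\Delta K_T=E(K_T-K_{T-}\mid\FF_{T-})\le 2X_{T-}$, so $EK_T^2$ is still controlled by $E\sup_{t\le T}|Y_t|^2$.
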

\begin{dow}
Without loss of generality we may assume that $Y$ is bounded from
above, for  otherwise we can first prove the inequality for the
supermartingale $Y\wedge n$ and then pass with $n$ to the limit.
By the It\^o-Meyer formula, for every $\tau\in\mathcal{T}$,
\begin{equation}
\label{eq1.9}
|Y_t|^2=|Y_\tau|^2+\int_{t\wedge\tau}^{\tau}Y_{r-}\,dK_r
-\int_{t\wedge\tau}^{\tau}Y_{r-}\,dM_r-\int_{t\wedge\tau}^{\tau}d[Y]_r,
\quad t\in[0, \tau].
\end{equation}
We also have
\begin{equation}
\label{eq1.10}
Y_t=Y_\tau+\int_{t\wedge\tau}^{\tau}dK_r
-\int_{t\wedge\tau}^{\tau}dM_r,
\quad t\in[0, \tau].
\end{equation}
Let $\{\tau_k\}\subset\mathcal{T}$ be a fundamental sequence for
$M$. Then replacing $\tau$ by $\tau^k$ in the above equation,
taking the expectation and then letting $k\rightarrow \infty$ we
get
\begin{equation}
\label{eq2.01} EK_T=EY_0-EY_T<\infty.
\end{equation}
Using the localization procedure one can deduce from (\ref{eq1.9})
and (\ref{eq2.01}) that
\[
EY_t^2+E\int_{t}^{T}d[Y]_r\le E|Y_T|^2+\|Y\|_\infty EK_T<\infty.
\]
Let us recall that since $K$ is predictable $E[Y]_T=E[M]_T+E[K]_T$.
Squaring both sides of (\ref{eq1.10}), applying Doob's
$L^2$-inequality and performing  standard calculations we conclude
that there exists $c_1>0$ such that
\begin{equation}
\label{eq2.012} E|K_T|^2\le c_1(E\sup_{t\le T}|Y_t|^2+E[Y]_T).
\end{equation}
By (\ref{eq1.9}) and  Doob's $L^2$-inequality,
\[
E\sup_{t\le T}|Y_t|^2+E[Y]_T\le E\sup_{t\le T}|Y_t|^2+\alpha
E\sup_{t\le T} |Y_t|^2+\frac{1}{\alpha} E|K_T|^2+4\alpha
E\sup_{t\le T}|Y_t|^2+\frac{1}{\alpha} E[Y]_T
\]
for every $\alpha>0$. The lemma follows from (\ref{eq2.012}) and
the above inequality with $\alpha=2c_1+2$.
\end{dow}

\begin{lm}
\label{lm2.A} Assume that
\begin{equation}
Y_t^n=Y_0^n-A_t^n+M_t^n, \quad t\in[0, T],
\end{equation}
where $A^n\in{ }^p\VV^2_0$, $M^n\in\MM_0^2$, $Y^1, Y\in \mathcal{S}^2$ and
$Y_t^n\le Y^{n+1}_t$, $t\in[0, T]$, $n\in\BN$. Then the process
$Y$ defined as  $Y_t=\lim_{n\rightarrow\infty} Y^n_{t}$, $t\in[0,
T]$, is c\`adl\`ag.
\end{lm}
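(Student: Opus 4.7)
The plan is to show that the canonical decomposition pieces $(A^n, M^n)$ form Cauchy sequences in ${}^p\VV^2_0$ and $\MM^2_0$ respectively, and then to identify $Y$ with the c\`adl\`ag process $y - A + M$, where $y := \lim_n Y_0^n$ in $L^2$ and $A, M$ are the respective limits. The two main inputs are (i) the uniform $\mathcal{S}^2$-bound $\sup_n E\sup_{t\le T}|Y^n_t|^2 < \infty$, which follows from $Y^1 \le Y^n \le Y$ and $Y^1, Y \in \mathcal{S}^2$, and (ii) the $L^2$-Cauchy property $\lim_{n\to\infty}\sup_{m\ge n} E(Y_T^m - Y_T^n)^2 = 0$, obtained by dominated convergence with dominant $(Y_T - Y_T^1)^2 \in L^1$.

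First I would establish a uniform estimate $\sup_n (E[M^n]_T + E|A^n|^2_T) < \infty$; this is a variant of Lemma~\ref{lm1.2} adapted to special semimartingales whose predictable finite variation part is signed rather than increasing, and it can be obtained by localizing and applying the It\^o--Meyer formula to $(Y^n)^2$. Next, for $n\le m$ set $Z^{n,m} := Y^m - Y^n \ge 0$, with canonical decomposition
\[
Z^{n,m}_t = (Y^m_0 - Y^n_0) - (A^m_t - A^n_t) + (M^m_t - M^n_t).
\]
Applying It\^o's formula to $(Z^{n,m})^2$, taking expectations, and exploiting $E[A^m-A^n,\, M^m-M^n]_T = 0$ (jumps of a predictable finite variation process occur at predictable stopping times, at which the martingale difference has conditional mean zero), one obtains an identity of the form
\[
E[M^m - M^n]_T + E[A^m - A^n]_T = E(Z^{n,m}_T)^2 - E(Z^{n,m}_0)^2 + 2E\int_0^T Z^{n,m}_{s-}\, d(A^m-A^n)_s.
\]

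The \emph{main obstacle} is the control of the cross-integral on the right: because $A^n$ is not assumed to be monotone, there is no direct supermartingale comparison available and one cannot appeal to Mertens' regularization. The plan is to absorb this term using the nonnegativity of $Z^{n,m}$, the uniform ${}^p\VV^2_0$-bound on $(A^n)$, and the $L^2$-Cauchy property of $(Y^n_T)$, yielding that $(M^n)$ is Cauchy in $\MM^2_0$ and $(A^n)$ is Cauchy in ${}^p\VV^2_0$. Their limits $M$ and $A$ are automatically c\`adl\`ag, so setting $\tilde Y_t := y - A_t + M_t$ produces a c\`adl\`ag process with $Y^n_t \to \tilde Y_t$ in $L^2$ for each $t$. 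Combined with the pointwise convergence $Y^n_t \to Y_t$, this gives $Y_t = \tilde Y_t$ almost surely for every $t$, so that $Y$ is indistinguishable from $\tilde Y$ and the conclusion follows.
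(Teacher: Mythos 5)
Your overall strategy --- forcing strong convergence of the canonical decompositions $(A^n,M^n)$ in ${}^p\VV^2_0$ and $\MM^2_0$ --- is not just incomplete at the step you flag as the ``main obstacle''; it is aimed at a conclusion that is false in the generality of this lemma. In the monotone-limit situation where the lemma is applied (penalization schemes), the $A^n$ are continuous while the limiting finite variation part typically has jumps; since the continuous finite variation processes are closed under total variation convergence, $(A^n)$ cannot be Cauchy in $E|\cdot|_T^2$, and correspondingly $(M^n)$ cannot be Cauchy in $\MM^2$ (this is exactly the well-known limitation of Peng's monotonic limit theorem: only weak $L^2$ convergence of the decompositions holds, with strong convergence of $M^n$ only in $\MM^p$, $p<2$). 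Concretely, the cross-integral $2E\int_0^T Z^{n,m}_{s-}\,d(A^m-A^n)_s$ is a difference of two nonnegative quantities, each of size $E\int Z^{n,m}_{s-}\,dA^m_s\le (E\sup_s|Z^{n,m}_s|^2)^{1/2}(E|A^m|_T^2)^{1/2}$, and making this small would require $Y^n\to Y$ in $\mathcal{S}^2$ --- essentially what you are trying to prove, and itself false in general. A secondary gap: the ``variant of Lemma \ref{lm1.2} for signed $A^n$'' does not exist; the bound $E|A|_T^2\le cE\sup_t|Y_t|^2$ uses monotonicity in an essential way (for increasing $A$ one has $|A|_T=A_T=Y_0-Y_T+M_T$; for signed $A$ take $A$ deterministic and rapidly oscillating with small amplitude to see the bound fail).

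The paper's proof avoids all of this by never asking for strong convergence. From the uniform bound $\sup_nE|M^n_T|^2<\infty$ one extracts a \emph{weak} $L^2(\FF_T)$ limit $X$ of $M^n_T$, takes $N$ to be a c\`adl\`ag version of $E(X|\FF_t)$, and defines $A_t:=Y_0-Y_t+N_t$. Since $EM^n_\tau Z=E(M^n_TE(Z|\FF_\tau))\to EN_\tau Z$, one gets $A^n_\tau\to A_\tau$ weakly for every stopping time $\tau$; passing the inequality $A^n_\sigma\le A^n_\tau$ ($\sigma\le\tau$) through the weak limit and invoking the section theorem shows $A$ is increasing. Then $Y=Y_0-A+N$ with $N$ c\`adl\`ag and $A$ increasing forces $\limsup_{s\downarrow t}Y_s\le Y_t$, while $Y=\sup_nY^n$ with each $Y^n$ right-continuous forces $\liminf_{s\downarrow t}Y_s\ge Y_t$; this is the content of the cited Lemma 2.2 of Peng and yields the c\`adl\`ag property. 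If you want to salvage your write-up, replace the Cauchy argument by this weak-compactness argument; no quantitative estimate on $A^m-A^n$ or $M^m-M^n$ is needed.
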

\begin{dow}
By Lemma \ref{lm1.2},
\[
E|A^n|_T^2+E[M^n]_T\le c E\sup_{t\in[0, T]}|Y^n_t|^2\le
cE\sup_{t\in[0, T]}(|Y^1_t|^2\vee|Y_t|^2).
\]
It follows in  particular that $\sup_nE|M^n_T|^2<\infty$.
Therefore there is $X\in L^2(\FF_T)$ such that $M^n_T\rightarrow
X$ weakly in $L^2(\FF_T)$. Let $N$ be a c\`adl\`ag version of the
martingale $E(X|\FF_t)$, $t\in[0, T]$. Then  for every
$\tau\in\mathcal{T}$, $M^n_\tau\rightarrow N_\tau$ weakly in
$L^2(\FF_T)$. Indeed, for any $Z\in L^2(\FF_T)$,
\begin{align*}
EM^n_\tau Z=E(E(M^n_T|\FF_\tau) Z)&=E(M^n_TE(Z|\FF_\tau))\\
&\quad\rightarrow E(XE(Z|\FF_\tau))=E(E(X|\FF_\tau)Z)=EN_\tau Z.
\end{align*}
Put
\[
A_t=Y_0-Y_t+N_{t}, \quad t\in[0, T].
\]
Then for every $\tau\in\mathcal{T}$,
\[
A^n_{\tau}=Y_0^n-Y^n_{\tau}+M_\tau^n\rightarrow
Y_0-Y_\tau+N_\tau=A_\tau
\]
weakly in $L^2(\FF_T)$. From the above convergence we conclude
that $A_\sigma\le A_\tau$ for every $\sigma, \tau\in\mathcal{T}$
such that $\sigma\le\tau$. From the section theorem it now follows
that $A$ is an increasing process. Consequently, $Y$ is c\`adl\`ag
by \cite[Lemma 2.2]{Peng}.
\end{dow}
\medskip

We will need the following hypotheses.
\begin{enumerate}
\item[(H4)]$\xi\in L^1(\FF_T)$, $V\in\mathcal{V}_0^1$, $f(\cdot, 0)\in
L^1(\FF)$.
\item[(H5)] There exists $X\in \mathcal{V}^1 \oplus\MM$ such that
$X_t\ge L_t$ for a.e. $t\in[0, T]$ and $f^-(\cdot, X)\in
L^1(\FF)$.
\end{enumerate}

Let $s\in[0,T)$ and let $\tau\in\TT_t$. We will say that a
sequence of processes $\{X^n\}$ converges to $X$ uniformly in
probability on $[s,\tau)$ (ucp on $[s,\tau)$ for short) if for
every subsequence $\{n'\}$ there is a further subseqence $\{n''\}$
such that $X^{n''}\rightarrow X$ a.s. uniformly on compact subsets
of $[s,\tau)$.

\begin{tw}
\label{tw1.1} Assume that $\mbox{\rm{(H1)--(H4)}}$ hold. Then
there exists a solution $(Y, M, K)$ of $\mbox{\rm{RBSDE}}(\xi,
f+dV, L)$ such that $K\in\mathcal{V}^{1,+}$ iff $\mbox{\rm{(H5)}}$
is satisfied. Moreover, under $\mbox{\rm{(H1)--(H5)}}$,
$Y^n_t\nearrow Y_t$, $t\in[0, T]$, $Y^n\rightarrow Y$ and
$\int_{s}^\cdot dK_r^n \rightarrow\int_{s}^\cdot dK_r$ in ucp on
$[s,\tau_{s})$ for every $s\in [0,T)$, where $\tau_s=\inf\{t>s;
\Delta K_t>0\}$, and finally, for every $\tau\in\mathcal{T}$,
$EK_\tau^n\rightarrow EK_\tau$.
\end{tw}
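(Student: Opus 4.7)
The plan is to handle the two directions of the equivalence separately and then read off the convergence statements from the construction.

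For necessity, suppose $(Y,M,K)$ with $K\in\mathcal{V}^{1,+}$ exists. I would take $X=Y$ and verify (H5). Property (b) in the definition gives $Y\ge L$ a.e. From the BSDE, combining $Y$'s class-(D) property, integrability of $\xi$, $V$, $K$ and a localization argument exploiting the monotonicity (H1) (which controls $f^-(\cdot,Y)$ via $f^-(\cdot,0)$ and $Y$ once one separates $\{Y\ge 0\}$ and $\{Y<0\}$), one deduces $E\int_0^T f^-(r,Y_r)\,dr<\infty$ and that the canonical finite-variation part of $Y$ sits in $\mathcal{V}^1$ (after truncating $M$ by its localizing sequence if needed), giving $Y\in\mathcal{V}^1\oplus\MM$.

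For sufficiency, I would use the penalization scheme (\ref{eq1.7}) and proceed in four steps. First, \cite[Proposition 2.1]{KR:JFA} applied to two consecutive penalized BSDEs yields the monotonicity $Y^n\le Y^{n+1}$. Second, I would build a supersolution $\bar Y$ dominating every $Y^n$ using the process $X$ from (H5): one runs a Snell-envelope construction on $X$ perturbed by $V$ and the generator, and invokes Propositions \ref{stw1.2} and \ref{stw2.EK} to obtain a c\`adl\`ag supermartingale above $L$ and of class (D). A comparison argument against each penalized BSDE then gives $Y^n\le\bar Y$. Third, one passes to the pointwise monotone limit $Y=\lim_n Y^n$ via the auxiliary supermartingales $\tilde Y^n_t=Y^n_t+\int_0^t f(r,Y^n_r)\,dr+V_t$, whose Doob--Meyer decompositions produce predictable $K\in\mathcal{V}^{1,+}$ and $M\in\MM_{0,loc}$ satisfying (d); Lemma \ref{lm1.1} identifies $Y$ as the smallest supersolution above $L$. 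Fourth, the minimality condition (c) follows from Corollary \ref{wn1.3} applied to the limit $Y$, together with the characterization of the support of $dK$ for Snell envelopes.

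For the convergence statements, from the penalized equation one has
\[
K^n_T-K^n_t = Y^n_t - Y^n_T - \int_t^T f(r,Y^n_r)\,dr + (V_t-V_T) + (M^n_T-M^n_t).
\]
The sandwich $f(r,Y_r^1)-\mu Y_r^1\ge f(r,Y_r^n)-\mu Y_r^n\ge f(r,Y_r)-\mu Y_r$ from (H1), combined with the monotone convergence $Y^n\nearrow Y$ in class (D), gives dominated convergence of the generator term, hence $EK^n_\tau\to EK_\tau$ for every $\tau\in\mathcal{T}$. The ucp convergence of $Y^n$ and $\int_s^\cdot dK^n$ on $[s,\tau_s)$ follows from Dini-type arguments: on an interval containing no jump of $K$, a monotone increasing sequence of c\`adl\`ag processes converging to a c\`adl\`ag limit converges uniformly on compacts, and the restriction to $[s,\tau_s)$ is necessary since uniform convergence necessarily fails across a jump of the limit that has not yet appeared in the approximants.

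The main obstacle will be showing that the limit $Y$ in step three of sufficiency is c\`adl\`ag, since the $\mathcal{S}^2$ argument of Lemma \ref{lm2.A} is unavailable under only $L^1$ data. The remedy is to work with the class-(D) supermartingales $\tilde Y^n$ and use weak $L^1$-compactness of their martingale parts $M^n_\tau$ at arbitrary stopping times to identify a limit c\`adl\`ag martingale $M$, then apply a Peng-type criterion (analogous to the one used at the end of the proof of Lemma \ref{lm2.A}) to upgrade monotone pointwise convergence to right-continuity of $Y$.
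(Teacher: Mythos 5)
Your overall architecture matches the paper's (penalization, comparison against a process built from the $X$ of (H5), monotone limit, Doob--Meyer decomposition of $S_t=Y_t+\int_0^tf(r,Y_r)\,dr+V_t$, and Corollary \ref{wn1.3} at the predictable jump times of $K$), but two of the steps you identify would fail as you propose to carry them out.

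First, the c\`adl\`ag property of the limit. You correctly flag this as the main obstacle, but your remedy --- weak $L^1$-compactness of the martingale parts $M^n_\tau$ of the class-(D) supermartingales $\tilde Y^n$ --- does not go through. Writing $\tilde Y^n_t=\tilde Y^n_0-K^n_t+M^n_t$, one has $M^n_\tau=\tilde Y^n_\tau-\tilde Y^n_0+K^n_\tau$; the $\tilde Y^n_\tau$ are uniformly of class (D) but the $K^n_\tau$ are only bounded in $L^1$ (this is (\ref{eq1.19})), not uniformly integrable, so $\{M^n_T\}_n$ is merely $L^1$-bounded and Dunford--Pettis gives no weakly convergent subsequence. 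The paper's actual device is to apply the bounded concave map $\varphi(x)=x/(1+x)$ to the nonnegative differences $\bar Y^n=Y^n-Y^0+V^n-V^0$, producing supermartingales $Z^n$ that are uniformly bounded in $\mathcal{S}^2$; only then does the weak-compactness argument of Lemma \ref{lm2.A} (which is genuinely an $L^2$ argument) apply, and the c\`adl\`ag property of $Y$ is read off from that of $Z=\lim Z^n$. Without some such truncation/transformation your step three has no way to produce a c\`adl\`ag version of $Y$ from $L^1$ data alone.

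Second, the convergence statements. Your Dini argument for $Y^n\to Y$ is incomplete as stated: a monotone sequence of c\`adl\`ag processes converging pointwise to a c\`adl\`ag limit need \emph{not} converge locally uniformly, even on intervals free of jumps of $K$, because $Y$ and the $Y^n$ may still jump there (through $M$, $M^n$, $V$). One also needs convergence of the left limits, which the paper obtains by showing ${}^pR^n_t=R^n_{t-}$ for all $t$ and ${}^pR_t=R_{t-}$ precisely on $(s,\tau_s)$, so that ${}^pR^n\nearrow{}^pR$ translates into $R^n_{t-}\nearrow R_{t-}$ there; only then does the c\`adl\`ag Dini lemma apply. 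More seriously, the ucp convergence of $\int_s^\cdot dK^n_r$ cannot be a ``Dini-type'' consequence: after subtracting the uniformly convergent terms one is left with $M^{n,(s)}-K^{n,(s)}\to M^{(s)}-K^{(s)}$ ucp, and separating the martingale part from the finite-variation part in the limit requires a tightness argument --- the paper invokes the UT condition via \cite[Proposition 1-5]{MeminSlominski} (using $\sup_nEK^n_T<\infty$) and Jacod's theorem on convergence of quadratic variations to get $[M^{n,(s)}-M^{(s)}]\to_P0$, hence uniform convergence of the martingale parts, and only then of $\int_s^\cdot dK^n_r$. Since your verification of the minimality condition (c) rests on passing to the limit in $\int_s^{s\vee T^s_m}(Y^n_{t-}-\hat L_{t-})\,dK^n_t\le0$, this gap propagates into step four as well. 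Your treatment of the necessity direction and of $EK^n_\tau\to EK_\tau$ (via the (H1) sandwich and dominated convergence) is sound and consistent with the paper.
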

\begin{dow}
By \cite[Lemma 2.3]{KR:JFA}, if there exists a solution of
RBSDE$(\xi, f+dV,L)$ such that $K\in\mathcal{V}^{1,+}$ then (H5)
is satisfied with $X=Y$. Suppose now that (H5) is satisfied. By
\cite[Proposition 2.1]{KR:JFA}, for every $n\ge0$,
\begin{equation}
\label{eq1.11}
Y^n_t\le Y_t^{n+1},\quad t\in[0, T].
\end{equation}
By (H5) there exists $X$ of class (D) such that
\begin{equation}
\label{eq1.12} X_t\ge L_t \mbox{ for a.e. }t\in[0,T], \quad
f^-(\cdot, X_\cdot)\in L^1(\FF)
\end{equation}
and
\[
X_t=X_0-U_t+N_t, \quad t\in[0, T]
\]
for some $N\in\mathcal{M}_0$, $U\in\mathcal{V}_0^1$. Clearly,
\begin{align*}
X_t&=X_T+\int_{t}^{T}f(r, X_r)\,dr-\int_{t}^{T}f^+(r, X_r)
+\int_{t}^{T}f^-(r, X_r)\,dr\\
&\quad + \int_{t}^{T}dU_r-\int_{t}^{T}dN_r, \quad t\in[0,T].
\end{align*}
Let $(\bar{X}, \bar{N})$ be a solution of the BSDE
\begin{align*}
\bar{X}_t&=X_T \vee\xi+\int_{t}^{T}f(r,
\bar{X}_r)\,dr+\int_{t}^{T}f^-(r, X_r)\,dr\\
&\quad+ \int_{t}^{T}dV_r^+
\int_{t}^{T}dU^+_r-\int_{t}^{T}d\bar{N}_r, \quad t\in[0, T].
\end{align*}
Then by \cite[Proposition 2.1]{KR:JFA}, $\bar{X}_t\ge X_t$,
$t\in[0,T]$, and hence, by (\ref{eq1.12}), $\bar{X}_t\ge L_t$ for
a.e. $t\in[0,T]$. Therefore
\begin{align*}
\bar{X}_t&=X_T\vee\xi+\int_{t}^{T}f(r, \bar{X}_r)\,dr+
\int_{t}^{T}n(\bar{X}_r-L_r)^-\,dr\\
&\quad+\int_{t}^{T}f^-(r, X_r)\,dr
\int_{t}^{T}dV^+_r+\int_{t}^{T}dU_r^+-\int_{t}^{T}d\bar{N}_r,
\quad t\in[0,T],
\end{align*}
so using (\ref{eq1.11}) and once again \cite[Proposition
2.1]{KR:JFA} gives
\begin{equation}
\label{eq1.13} Y_t^0\le Y_t^n\le\bar{X}_t, \quad t\in[0,T].
\end{equation}
By \cite[Lemma 2.3]{KR:JFA},
\begin{equation}
\label{eq1.14}
E\int_{0}^{T}|f(r, \bar{X}_r)|\,dr+E\int_{0}^{T}|f(r, Y_r^0)|\,dr<\infty.
\end{equation}
Write $\varphi(x)=x/(1+x)$, $x\ge0$ and $\bar{Y}_t^n\equiv
Y_t^n-Y_t^0+V_t^n-V_t^0$, $t\in[0,T]$, and observe that by
(\ref{eq1.11}), $\bar{Y}_t^n\ge 0$ for $t\in[0,T]$. By the
It\^o-Meyer formula,
\begin{align*}
\varphi(\bar{Y}_t^n)&=\varphi(\bar{Y}_0^n)
+\int_{0}^{t}\varphi'(\bar{Y}_{r-}^n)\,d\bar{Y}_r^n+ \frac12
\int_{0}^{t}\varphi''(\bar{Y}_{r-}^n)\,d[\bar{Y}^n]_r^c\\
&\quad+\sum_{0<s\le t}(\Delta\varphi(\bar{Y}_s^n)
-\varphi'(\bar{Y}^n_{s-})\Delta\bar{Y}_s^n), \quad t\in[0, T].
\end{align*}
For $t\in[0, T]$ set
\[
A_t^n=\int_{0}^{t}\varphi'(\bar{Y}^n_{r-})\,dK_r^n, \quad
B_t^n=-\frac12
\int_{0}^{t}\varphi''(\bar{Y}^n_{r-})\,d[\bar{Y}^n]_r^c,
\]
\[
C_t^n=-\sum_{0<s\le t}(\Delta\varphi(\bar{Y}_s^n)
-\varphi'(\bar{Y}^n_{s-})\Delta\bar{Y}_s^n),\quad \tilde{C}^n_t=(C_t^n)^p,
\]
\[
F_t^n=\int_{0}^{t}\varphi'(\bar{Y}_{r-}^n)(f(r,Y_r^n)-f(r,Y^0_r))\,dr,
\]
\[
N_t^n=\int_{0}^{t}\varphi'(\bar{Y}_r^n)\,d(M^n-M^0)_r+(\tilde{C}_t^n-C^n_t), \quad
Z_t^n=\varphi(\bar{Y}^n_t)-L_t^n.
\]
Then
\[
Z_t^n=Z_0^n-A_t^n-B_t^n-\tilde{C}_t^n+N_t^n, \quad t\in[0, T].
\]
Since $\varphi$ is nondecreasing and concave, $A^n, B^n,
C^n\in\mathcal{V}^+_0$. By (H1) and (\ref{eq1.11}),
\[
f(r, Y_r)-f(r, Y_r^0)-\mu(Y^n_r-Y^0_r)\le
f(r, Y^n_r)-f(r, Y^0_r)\le\mu(Y_r^n-Y^0_r)
\]
for a.e. $r\in[0,T]$. Hence
\begin{equation}
\label{eq1.14a} |f(r, Y^n_r)-f(r, Y^0_r)|\le 2|\mu||Y_r-Y^0_r|
+|f(r,Y_r)-f(r,Y^0_r)|
\end{equation}
for a.e. $r\in[0, T]$. By (\ref{eq1.11}), (\ref{eq1.13}),
(\ref{eq1.14}) and (H3),
\begin{equation}
\label{eq1.14b}
E\int_0^T|f(r, Y_r)|<\infty,
\end{equation}
where
\begin{equation}
\label{eq1.14c} Y_t=\sup_{n\ge 0}Y^n_t, \quad t\in[0,T].
\end{equation}
Since $\varphi'$ is bounded, it follows from (\ref{eq1.14a}) that
there exists a stationary sequence $\{\sigma_k\}\in\mathcal{T}$
(i.e. $P(\liminf_{k\rightarrow\infty}\{\sigma_k=T\})=1$) such that
$\sup_{n\ge1}E|F^n|_{\sigma_k}^2<\infty$. By Lemma \ref{lm1.2}
there exists $c>0$ not depending of $n$ such that
\[
E|A_{\sigma_k}^n|^2+E|B_{\sigma_k}^n|^2+E|\tilde{C}_{\sigma_k}^n|^2
+E[Z^n]_{\sigma_k}\le c.
\]
By (\ref{eq1.11}) and our assumptions on $\{V_n\}$, $Z_t^n\le
Z_t^{n+1}$, $t\in[0,T]$, $n\ge0$, and
\[
Z_t^n\nearrow Z_t, \quad t\in[0,T],
\]
where $Z_t=\varphi(\bar{Y}_t)-F_t$,
$\bar{Y}_t=Y_t+V_t-Y_t^0-V_t^0$, and
\begin{equation}
\label{eq1.15}
F_t=\lim_{n\rightarrow\infty}F_t^n,
\quad t\in[0, T].
\end{equation}
By (\ref{eq1.14a})--(\ref{eq1.14c}) and (H3),
\begin{equation}
\label{eq1.16}
\int_{0}^{t}f(r, Y_r^n)\,dr\rightarrow \int_{0}^{t}f(r, Y_r)\,dr,
\quad t\in[0, T].
\end{equation}
Since $\varphi'$ is  bounded and continuous, it follows from
(\ref{eq1.16}) that
\begin{equation}
\label{eq1.17}
F_t=\int_0^t\varphi'(\bar{Y}_r)(f(r, Y_r)-f(r, Y^0_r))\,dr, \quad t\in[0, T].
\end{equation}
By Lemma \ref{lm2.A}, $Z$ is a c\`adl\`ag process, so $Y$ is
c\`adl\`ag, too. Set
\[
S_t=Y_t+\int_0^tf(r, Y_r)\,dr+\int_0^tdV_r, \quad t\in[0, T].
\]
Observe that $S^n$ defined as
\[
S_t^n=Y_t^n+\int_{0}^{t}f(r, Y_r^n)\,dr+ \int_{0}^{T}dV_r^n, \quad
t\in[0,T]
\]
is a supermartingale and by (\ref{eq1.14c}), (\ref{eq1.16}) and
the assumptions on $\{V^n\}$, $S_t^n\rightarrow S_t$, $t\in[0,T]$.
From the last convergence and (\ref{eq1.13}), (\ref{eq1.17}) it
follows  that $S$ is a c\`adl\`ag supermartingale of class (D).
Therefore there exist $K\in{}^p\mathcal{V}^{1,+}_0$ and
$M\in\MM_0$ such that
\begin{equation}
\label{eq1.17a}
Y_t=\xi+\int_t^Tf(r, Y_r)\,dr+\int_t^TdK_r+\int_t^TdV_r-
\int_t^TdM_r, \quad t\in[0, T].
\end{equation}
Let
\[
R^n_t=Y_t^n+V_t^n,\quad R_t=Y_t+V_t,\quad t\in[0,T].
\]
Then for $t\in[0,T]$,
\[
R^n_t=R^n_0-\int_0^tf(r, Y^n_r)\,dr-K^n_t+M_t^n, \quad
{}^pR^n_t=R^n_0-\int_0^tf(r, Y^n_r)\,dr-K^n_t+M_{t-}^n\,,
\]
\[
R_t=R_0-\int_0^tf(r, Y_r)\,dr-K_t+M_t,
\]
and since $K$ is predictable,
\[
{}^pR_t=R_0-\int_0^tf(r, Y_r)\,dr-K_t+M_{t-}\,.
\]
Since $R_t^n\nearrow R_t$, $t\in[0, T]$, it follows that
$^pR_t^n\nearrow { }^pR_t$, $t\in[0, T]$. Let us fix $s\in [0,T)$. Observe that
\[
{}^pR_t^n=R_{t-}^n, \quad t\in[0,T], \quad {}^pR_t=R_{t-}, \quad
t\in(s,\tau_s).
\]
By Dini's theorem, $R^n\rightarrow R$ in ucp on $[s,\tau_s)$.
Since by the assumption $V^n\rightarrow V$ uniformly on $[0,T]$,
$Y^n\rightarrow Y$ in ucp on $[s,\tau_s)$. Let
\[
R_t^{s, n}=Y_t^{n, (s)}+V_t^{n, (s)}, \quad
R_t^s=Y_t^{(s)}+V_t^{(s)}, \quad t\in[0, T]
\]
with the notation $W^{(s)}_t=W_{t\vee s}-W_{s}$. Then
\[
R_t^{s, n}=-\int_{s}^{t\vee s}f(r, Y^n_r)\,dr-
\int_{s}^{t\vee s}dK_r^n+
\int_{s}^{t\vee s}dM_r^n, \quad t\in[0, T]
\]
and
\[
R_t^{s}=-\int_{s}^{t\vee s}f(r, Y_r)\,dr-
\int_{s}^{t\vee s}dK_r+
\int_{s}^{t\vee s}dM_r, \quad t\in[0, T].
\]
Let $\{T^s_m,\ m\ge 1\}$ be an announcing sequence for $\tau_s$
($\tau_s$ is predictable since $K$ is predictable). Then by what
has already been proved,
\begin{equation}
\label{eq1.18} \sup_{t\in[0, s\vee T^s_m]}|Y_t^{n,
(s)}-Y_t^{(s)}|\rightarrow0, \quad \sup_{t\in[0, s\vee
T^s_m]}|R_t^{s, n}-R_t^s|\rightarrow0.
\end{equation}
By the assumptions on $\{V_n\}$ and (\ref{eq1.13}),
\begin{equation}
\label{eq1.19} \sup_{n\ge1}EK_T^n<\infty.
\end{equation}
Therefore by \cite[Proposition 1-5]{MeminSlominski} the sequence
$\{R^{s, n}-R^s\}$ satisfies the condition UT. Therefore applying
the results of \cite{Jacod} we obtain
\[
[R^{s, n}-R^s]_{T^s_m\vee s}=[M^{n,(k)}-M^{(k)}]_{T^s_m\vee s}
\rightarrow_P 0
\]
as $n\rightarrow\infty$, or, equivalently,
\[
\sup_{t\in[0, s\vee T^k_m]}|M_t^{n, (s)}- M_t^{(s)}|\rightarrow_P 0.
\]
From this we conclude that for every  $m\ge 1$,
\begin{equation}
\label{eq1.20} \sup_{t\in[s, s\vee T^s_m]}\Big|\int_{s}^t
dK_r^n-\int_{s}^tdK_r\Big|\rightarrow_P 0.
\end{equation}
Observe that by (\ref{eq1.19}), $Y_t\ge L_t$ for a.e. $t\in[0,T]$.
Let $\hat{L}$ be a c\`adl\`ag process such that
$L_t\le\hat{L}_t\le Y_t$ for a.e. $t\in[0, T]$. Then
\begin{equation}
\label{eq1.21}
\int_{s}^{s\vee T_m^s}(Y_{t-}^n-\hat{L}_{t-})\,dK_t^n\le
n\int_{s}^{s\vee T_m^s}(Y_t^n-\hat{L}_t)(Y_t^n-\hat{L}_t)^-\,dt\le 0.
\end{equation}
By (\ref{eq1.18}) and (\ref{eq1.20}),
\[
\int_{s}^{s\vee T_m^s}(Y_t^n-\hat{L}_t)\,dK_t^n
\rightarrow \int_{s}^{s\vee T_m^s}(Y_t-\hat{L}_t)\,dK_t
=\int_{s}^{s\vee T_m^s}(Y_{t-}-\hat{L}_{t-})\,dK_t
\]
since $\Delta K_t=0$ for $t\in(s,\tau_{s})$. Since
$Y_{t-}\ge \hat{L}_{t-}$ for $t\in(0, T]$, it follows from
(\ref{eq1.21}) that for every $s\in [0,T)$,
\begin{equation}
\label{eq1.22}
\int_{s}^{\tau_{s}}(Y_{t-}-\hat{L}_{t-})\,dK_t=0.
\end{equation}
By the definition of  $\{\tau_s\}$, $\Delta
K_{\tau_s}>0$ on $\{\tau_s<\infty\}$. Hence
\[
^pY_{\tau_s}+ { }^pV_{\tau_s}-V_{\tau_s-}<Y_{\tau_s-}\quad
\mbox{on}\quad \{\tau_s<\infty\}
\]
since $^pY_{\tau_s}-Y_{\tau_s-}=-\Delta
K_{\tau_s}-(^pV_{\tau_s}-V_{\tau_s-})$. Therefore by Corollary
\ref{wn1.3},  $Y_{\tau_s-}=\hat{L}_{\tau_s-}$ on
$\{\tau_s<\infty\}$, which when  combined with (\ref{eq1.22})
shows that
\[
\int_0^T(Y_{t-}-\hat{L}_{t-})\,dK_t=0.
\]
Integrability of $K$ follows from (\ref{eq1.17}) and
the fact that $Y$ is of class (D).
\end{dow}

\begin{stw}
\label{stw1.3} Assume that $(\xi^i, f^i+dV^i, L)$, $i=1, 2$,
satisfy $\mbox{\rm{(H1)--(H5)}}$. Let $(Y^i, M^i, K^i)$ be a
solution of $\mbox{\rm{RBSDE}}(\xi^i, f^i+dV^i, L)$, $i=1,2$. If
$\xi^1\le\xi^2$, $dV^1\le dV^2$ and either $f^1(t, Y_t^1)\le
f^2(t,Y_t^2)$ or $f^1(t, Y_t^2)\le f^2(t, Y_t^2)$ for a.e.
$t\in[0, T]$ then $dK^2\le dK^1$.
\end{stw}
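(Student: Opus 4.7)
Plan. By Proposition \ref{stw1.1}, the given hypotheses yield $Y^1_t\le Y^2_t$ for every $t$. The plan is then to use the frozen-generator penalization scheme from the proof of Lemma \ref{lm1.1}: set $F^i(r):=f^i(r,Y^i_r)$ and let $(\bar Y^{i,n},\bar M^{i,n})$ solve the (non-reflected) BSDE with generator $F^i(r)+n(y-L_r)^-$, terminal $\xi^i$ and driver $V^i$, with $\bar K^{i,n}_t:=\int_0^t n(\bar Y^{i,n}_r-L_r)^-\,dr$. From the proof of Lemma \ref{lm1.1} together with Theorem \ref{tw1.1}, $\bar Y^{i,n}\nearrow Y^i$ pointwise, $\bar K^{i,n}\to K^i$ in ucp on $[s,\tau^i_s)$ for every $s\in[0,T)$, and $E\bar K^{i,n}_\tau\to EK^i_\tau$ for every $\tau\in\TT$.

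In case (a), $F^1\le F^2$ holds pointwise. The penalized equations for $\bar Y^{1,n}$ and $\bar Y^{2,n}$ share the same nonlinear penalty $n(y-L_r)^-$ and differ only through the linear forcings $F^i$ and the (compatibly ordered) data $\xi^i$, $V^i$. Hence \cite[Proposition 2.1]{KR:JFA} applies and gives $\bar Y^{1,n}_t\le\bar Y^{2,n}_t$ for every $t$ and $n$. Pointwise this yields $(\bar Y^{1,n}_r-L_r)^-\ge(\bar Y^{2,n}_r-L_r)^-$, so $d\bar K^{1,n}\ge d\bar K^{2,n}$ as measures on $[0,T]$; equivalently, $\bar K^{1,n}-\bar K^{2,n}$ is nondecreasing for each $n$. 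This pathwise monotonicity is preserved by the limit passage above (it suffices to test it on intervals $[s,\tau^i_s)$ and take expectations at stopping times), so $K^1-K^2$ is nondecreasing, i.e.\ $dK^2\le dK^1$.

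In case (b) one need not have $F^1\le F^2$, and I handle it by inserting an auxiliary RBSDE with frozen generator $\tilde F^1(r):=f^1(r,Y^2_r)$, terminal $\xi^1$, driver $V^1$ and barrier $L$; denote its reflecting process by $\hat K$. The hypothesis of case (b) gives $\tilde F^1\le F^2$, so the case (a) argument applied to this auxiliary problem and $(Y^2,K^2)$ yields $dK^2\le d\hat K$. The complementary bound $d\hat K\le dK^1$, which compares two RBSDEs with common data $(\xi^1,V^1,L)$ but generators $f^1(\cdot,Y^2_\cdot)$ and $f^1(\cdot,Y^1_\cdot)$, follows from (H1) for $f^1$ combined with the ordering $Y^1\le Y^2$ and the minimality condition on $K^1$, along the lines of the Tanaka--Meyer argument used in Proposition \ref{stw1.1}. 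Chaining gives $dK^2\le d\hat K\le dK^1$.

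The main obstacle is the second comparison in case (b): the one-sided control provided by (H1) is not by itself enough to compare the frozen generators $\tilde F^1$ and $F^1=f^1(\cdot,Y^1_\cdot)$, and must be combined with the minimality of $K^1$ (the same device that makes Proposition \ref{stw1.1} work under either of its two alternative hypotheses). Once that step is secured, the monotonicity $d\bar K^{1,n}\ge d\bar K^{2,n}$ (or its auxiliary-RBSDE analogue) passes to the limit via Theorem \ref{tw1.1} without further effort.
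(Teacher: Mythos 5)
Your case (a) starts the same way as the paper: compare the penalized approximations, observe that $\bar Y^{1,n}\le\bar Y^{2,n}$ reverses into $d\bar K^{2,n}\le d\bar K^{1,n}$ because $y\mapsto n(y-L_r)^-$ is nonincreasing, and try to pass to the limit. But the limit passage is exactly where the work lies, and your claim that the ordering survives ``without further effort'' is wrong. Theorem \ref{tw1.1} gives ucp convergence of $\int_s^\cdot dK^{i,n}_r$ only on $[s,\tau_s)$, i.e.\ strictly before the first jump of $K^1+K^2$ after $s$; the approximating measures $d\bar K^{i,n}$ are absolutely continuous, so their ordering says nothing about the atoms of $dK^1,dK^2$ at the (predictable) jump times. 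The paper devotes the second half of its proof precisely to these atoms: on $\{\Delta K^2_{\tau_s}>0\}$ the minimality condition forces $Y^2_{\tau_s-}=\hat L_{\tau_s-}$, and then the identity $\Delta K^i_{\tau_s}=-({}^pY^i_{\tau_s}-Y^i_{\tau_s-})-({}^pV^i_{\tau_s}-V^i_{\tau_s-})$ together with $Y^1_{\tau_s-}\ge\hat L_{\tau_s-}$, ${}^pY^1\le{}^pY^2$ and $dV^1\le dV^2$ yields $\Delta K^1_{\tau_s}\ge\Delta K^2_{\tau_s}$. Your parenthetical ``take expectations at stopping times'' contains the germ of an alternative fix (from $E[(K^1-K^2)_\tau-(K^1-K^2)_\sigma]\ge 0$ for all $\sigma\le\tau$ one can conclude that $K^1-K^2$ is a predictable submartingale of integrable variation, hence increasing, since its martingale part must be constant), but you neither state nor prove that step, and the inequality $EK^2_\tau\le EK^1_\tau$ alone does not give the measure ordering $dK^2\le dK^1$.

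Case (b) has a more serious defect. The first link $dK^2\le d\hat K$ of your chain is fine, but the second link $d\hat K\le dK^1$ points the wrong way. You are comparing two RBSDEs with identical data $(\xi^1,V^1,L)$ and frozen generators $f^1(\cdot,Y^2_\cdot)$ and $f^1(\cdot,Y^1_\cdot)$. Since $Y^1\le Y^2$, (H1) gives $f^1(t,Y^2_t)\le f^1(t,Y^1_t)+\mu(Y^2_t-Y^1_t)$ on $\{Y^2_t>Y^1_t\}$: for $\mu\le 0$ this says the $\hat{\ }$-problem has the \emph{smaller} generator, so the case (a) mechanism would give $d\hat K\ge dK^1$, the opposite of what you need; for $\mu>0$ there is no ordering at all. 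The appeal to ``the Tanaka--Meyer argument of Proposition \ref{stw1.1} combined with the minimality of $K^1$'' does not repair this: that argument compares the $Y$ components, not the reflecting measures, and you give no mechanism for converting it into one. The paper avoids the auxiliary problem entirely: both alternative hypotheses are fed into a single application of \cite[Proposition 2.1]{KR:JFA} to the penalized equations \mbox{\rm(\ref{eq1.7})} with the actual generators, yielding $Y^{1,n}_t\le Y^{2,n}_t$ and hence $dK^{n,2}\le dK^{n,1}$ uniformly in the two cases, after which only the limit passage and the jump analysis described above remain.
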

\begin{dow}
By \cite[Proposition 2.1]{KR:JFA},
\begin{equation}
\label{eq1.23} Y_t^{1, n}\le Y_t^{2,n}, \quad t\in[0,T],
\end{equation}
where $Y^{i, n}$ is a solution of (\ref{eq1.7}) with $(\xi, f, V)$
replaced by $(\xi^i,f^i,V^i)$, $i=1,2$. For $s\in [0,T)$ we set
$\tau_s=\inf\{t>s; \Delta K^1_t+\Delta K^2_t>0\}$. By Theorem
\ref{tw1.1},
\begin{equation}
\label{eq1.24}
\int_{s}^tdK_r^{n,i}\rightarrow\int_{s}^tdK^i, \quad
t\in[s, \tau_{s}),
\end{equation}
for every $s\in [0,T)$, where $K_t^{n,
i}=\int_0^tn(Y_r^{n,i}-L_r)^-\,dr$. From this and (\ref{eq1.23})
we conclude that for every $s\in [0,T)$,
\begin{equation}
\label{eq1.25} dK^2\le dK^1 \quad \mbox{on }\,[s,\tau_{s}).
\end{equation}
Let $\hat{L}$ be a c\`adl\`ag process such that
$L_t\le\hat{L}_t\le Y_t^i$ for a.e. $t\in[0, T]$, $i=1,2$ (for
instance one can take $\hat{L}=Y^1\wedge Y^2$). By the definition
of a solution of RBSDE$(\xi^2, f^2+dV^2, L)$, if $\Delta
K^2_{\tau_s}>0$ then $Y^2_{\tau_s-}=\hat{L}_{\tau_s-}$ on
$\{\tau_s<\infty\}$. Therefore by the assumptions of the
proposition and (\ref{eq1.23}), if $\Delta K^2_{\tau_s}>0$ and
$\tau_s<\infty$ then
\begin{align*}
\Delta K^1_{\tau_s}&=-(^pY^1_{\tau_s}-Y^1_{\tau_s-})
-(^pV^1_{\tau_s}-V^1_{\tau_s-})\ge
-(^pY^1_{\tau_s}-\hat{L}_{\tau_s-})-(^pV^1_{\tau_s}-V^1_{\tau_s-})\\
&\ge-(^pY^2_{\tau_s}-\hat{L}_{\tau_s-})-(^pV^2_{\tau_s}-V^2_{\tau_s-})
=-(^pY^2_{\tau_s}-Y^2_{\tau_s-})-(^pV^2_{\tau_s}-V^2_{\tau_s-})=\Delta
K_{\tau_s}^2.
\end{align*}
This and (\ref{eq1.25}) proves the proposition.
\end{dow}

\nsubsection{BSDEs with two reflecting barriers: the case $p=1$}
\label{sec3}

In this section we prove the existence of solutions of BSDEs with
two reflecting barriers $L,U$ which are separated by a
semimartingale. In what follows the upper barrier $U$  is a
progressively measurable process such that $L_t\le U_t$ for a.e.
$t\in[0, T]$.

\begin{df}
We say that a triple $(Y,M,A)$ is a solution of the reflected BSDE
with terminal condition $\xi$, right-hand side $f+dV$ and upper
barrier $U$ (RBSDE$(\xi, f+dV, U)$ for short) if
\begin{enumerate}
\item[(a)] $Y$ is an $\FF$ adapted c\`adl\`ag process  of class
(D), $M\in\MM_{0,loc}$, $A\in{ }^{p}\mathcal{V}^{+}_0$,
\item[(b)]$Y_{t}\le U_{t}$ for a.e. $t\in[0, T]$,
\item[(c)] for every c\`adl\`ag process $\hat{U}$ such that
$U_{t}\ge \hat{U}_{t}\ge Y_{t}$ for a.e. $t\in[0, T]$,
\[
\int_{0}^{T}(\hat{U}_{t-}-Y_{t-})\,dA_{t}=0,
\]
\item[(d)] $[0, T]\ni t\rightarrow f(t, Y_{t})\in L^{1}(0, T)$ and
\[
Y_{t}=\xi+\int_{t}^{T}f(r, Y_{r})\,dr
+\int_{t}^{T}dV_r-\int_{t}^{T}dA_{r}- \int_{t}^{T}dM_{r}, \quad
t\in[0,T].
\]
\end{enumerate}
\end{df}

From now on we adopt the convention that RBSDE$(\cdot,\cdot,L)$
denotes equation with lower barrier, while  RBSDE$(\cdot,\cdot,U)$
denotes equation with upper barrier.

\begin{df}
We say that a triple of processes $(Y, M, R)$ is a solution of the
reflected BSDE with terminal condition $\xi$, right-hand side
$f+dV$, lower barrier $L$ and upper barrier $U$ (RBSDE$(\xi, f+dV,
L, U)$ for short) if
\begin{enumerate}
\item[(a)] $Y$ is an $\FF$-adapted c\`adl\`ag process and of class
(D), $M\in\mathcal{M}_{0, loc}$, $R\in{ }^p\mathcal{V}_0$,
\item[(b)]$L_t\le Y_t\le U_t$ for a.e. $t\in[0, T]$,
\item[c)]for every c\`adl\`ag processes $\hat{L}, \hat{U}$ such that
$L_t\le\hat{L}_t\le Y_t\le\hat{U}_t\le U_t$ for a.e. $t\in[0, T]$
we have
\[
\int_0^T(Y_{t-}-L_{t-})\,dR_t^+=\int_0^T(U_{t-}-Y_{t-})\,dR_t^-=0,
\]
\item[(d)]$[0,T]\ni t\mapsto f(t, Y_t)\in L^1(0, T)$ and
\[
Y_t=\xi+\int_t^Tf(r, Y_r)\,dr+\int_t^TdV_r+\int_t^TdR_r
-\int_t^TdM_r, \quad t\in[0,T].
\]
\end{enumerate}
\end{df}

\begin{stw}
\label{stw2.1} Let $(Y^{i}, M^{i}, R^{i})$ be a solution of
$\mbox{\rm{RBSDE}}(\xi^{i},f^{i}+dV^{i}, L^{i}, U^i)$, $i=1, 2$.
Assume that $\xi^{1}\le\xi^{2}$, $L_{t}^{1}\le L_{t}^{2}$,
$U^1_t\le U_t^2$ for a.e. $t\in[0, T]$, $dV^{1}\le dV^{2}$, and
either $f^{1}$ satisfies $\mbox{\rm{(H1)}}$ and $f^{1}(t,
Y^{2}_{t})\le f^{2}(t, Y^{2}_{t})$ for a.e. $t\in[0, T]$ or
$f^{2}$ satisfies $\mbox{\rm{(H1)}}$ and $f^{1}(t, Y^{1}_{t})\le
f^{2}(t, Y^{1}_{t})$ for a.e. $t\in[0, T]$. Then $Y_{t}^{1}\le
Y_{t}^{2}$, $t\in[0, T]$.
\end{stw}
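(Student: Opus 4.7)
The plan is to follow the same Tanaka--Meyer strategy used in the proof of Proposition~\ref{stw1.1}, but with extra care to control the two-sided reflection term $dR^{1}-dR^{2}$ on the set $\{Y^{1}_{-}>Y^{2}_{-}\}$. Writing $Y=Y^{1}-Y^{2}$, $M=M^{1}-M^{2}$, $R=R^{1}-R^{2}$, I would apply the Tanaka--Meyer formula to $Y^{+}$ and, for each stopping time $\tau\in\TT$, get
\[
Y_{t}^{+}\le Y_{\tau}^{+}+\int_{t}^{\tau}\mathbf{1}_{\{Y_{r-}^{1}>Y_{r-}^{2}\}}(f^{1}(r,Y_{r}^{1})-f^{2}(r,Y_{r}^{2}))\,dr+\int_{t}^{\tau}\mathbf{1}_{\{Y_{r-}^{1}>Y_{r-}^{2}\}}\,d(V^{1}-V^{2})_{r}
\]
\[
+\int_{t}^{\tau}\mathbf{1}_{\{Y_{r-}^{1}>Y_{r-}^{2}\}}\,dR_{r}-\int_{t}^{\tau}\mathbf{1}_{\{Y_{r-}^{1}>Y_{r-}^{2}\}}\,dM_{r}.
\]
The $dV$-term is nonpositive by $dV^{1}\le dV^{2}$, and by the monotonicity assumption (H1) on whichever of $f^{1},f^{2}$ satisfies it, together with the hypothesis $f^{1}(t,Y^{i}_{t})\le f^{2}(t,Y^{i}_{t})$ (for the appropriate $i$), the $f$-term is bounded above by $\mu\int_{t}^{\tau}Y_{r}^{+}\,dr$, exactly as in \eqref{eq1.1}.

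The main obstacle, and the step that differs from the one-barrier case, is showing that
\[
\int_{t}^{\tau}\mathbf{1}_{\{Y_{r-}^{1}>Y_{r-}^{2}\}}\,dR_{r}\le 0.
\]
For this I would use the minimality condition~(c) of the definition with the test processes
\[
\hat{L}^{1}:=L^{1}\vee(Y^{1}\wedge Y^{2}),\qquad \hat{U}^{2}:=U^{2}\wedge(Y^{2}\vee Y^{1}).
\]
Both are c\`adl\`ag, and the hypotheses $L^{1}\le L^{2}\le Y^{2}\le Y^{1}$ (on $\{Y^{1}>Y^{2}\}$) and $Y^{1}\le U^{1}\le U^{2}$ give $L^{1}\le \hat{L}^{1}\le Y^{1}$ and $Y^{2}\le \hat{U}^{2}\le U^{2}$. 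On $\{Y_{-}^{1}>Y_{-}^{2}\}$ these simplify to $\hat{L}^{1}_{-}=Y^{2}_{-}$ and $\hat{U}^{2}_{-}=Y^{1}_{-}$, so the minimality identities yield
\[
\int_{0}^{T}\mathbf{1}_{\{Y_{r-}^{1}>Y_{r-}^{2}\}}(Y^{1}_{r-}-Y^{2}_{r-})\,dR^{1,+}_{r}=0,\qquad \int_{0}^{T}\mathbf{1}_{\{Y_{r-}^{1}>Y_{r-}^{2}\}}(Y^{1}_{r-}-Y^{2}_{r-})\,dR^{2,-}_{r}=0,
\]
which force $\mathbf{1}_{\{Y^{1}_{-}>Y^{2}_{-}\}}\,dR^{1,+}=0$ and $\mathbf{1}_{\{Y^{1}_{-}>Y^{2}_{-}\}}\,dR^{2,-}=0$. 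Hence on this set $dR=-dR^{1,-}-dR^{2,+}\le 0$, as required.

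With this in hand the proof concludes exactly as in Proposition~\ref{stw1.1}: choose a fundamental sequence $\{\tau_{n}\}\subset\TT$ for $M$, replace $\tau$ by $\tau_{n}$, take expectations (using that $Y^{+}$ is of class (D) since each $Y^{i}$ is), pass to the limit to obtain
\[
EY_{t}^{+}\le \mu\int_{t}^{T}EY_{r}^{+}\,dr,\qquad t\in[0,T],
\]
and apply Gronwall's lemma to conclude $EY_{t}^{+}=0$, i.e.\ $Y^{1}_{t}\le Y^{2}_{t}$ for every $t\in[0,T]$.
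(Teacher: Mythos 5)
Your argument is correct and is exactly the ``analogous'' argument the paper has in mind (its proof of Proposition \ref{stw2.1} is just a pointer back to Proposition \ref{stw1.1}): the only genuinely new ingredient is the treatment of $dR^{1}-dR^{2}$ on $\{Y^{1}_{-}>Y^{2}_{-}\}$, and your use of the minimality condition for $R^{1,+}$ (lower barrier of equation~1) and for $R^{2,-}$ (upper barrier of equation~2) is the right way to do it. One small repair: $L^{1}$ and $U^{2}$ are only progressively measurable, so $L^{1}\vee(Y^{1}\wedge Y^{2})$ and $U^{2}\wedge(Y^{1}\vee Y^{2})$ need not be c\`adl\`ag as you claim; simply take $\hat{L}=Y^{1}\wedge Y^{2}$ and $\hat{U}=Y^{1}\vee Y^{2}$ (these are c\`adl\`ag, satisfy $L^{1}\le\hat L\le Y^{1}$ and $Y^{2}\le\hat U\le U^{2}$ for a.e.\ $t$ by $L^{1}\le L^{2}\le Y^{2}$ and $Y^{1}\le U^{1}\le U^{2}$, and coincide with your processes for a.e.\ $t$), after which the rest of your computation goes through verbatim.
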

\begin{dow}
The proof is analogous to the proof of Proposition \ref{stw1.1}.
\end{dow}

\begin{wn}
Let assumption  $\mbox{\rm{(H1)}}$ hold. Then there exists at most
one solution of $\mbox{\rm{RBSDE}}(\xi,f+dV,L,U)$.
\end{wn}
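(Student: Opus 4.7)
The plan is to reduce the uniqueness to the comparison result of Proposition \ref{stw2.1}, and then peel off equality of the martingale and finite-variation parts using the uniqueness of the canonical decomposition of a special semimartingale.

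Concretely, suppose $(Y^{1},M^{1},R^{1})$ and $(Y^{2},M^{2},R^{2})$ are two solutions of $\mbox{\rm{RBSDE}}(\xi,f+dV,L,U)$. I would apply Proposition \ref{stw2.1} with $\xi^{1}=\xi^{2}=\xi$, $f^{1}=f^{2}=f$, $V^{1}=V^{2}=V$, $L^{1}=L^{2}=L$, $U^{1}=U^{2}=U$. The hypotheses on the terminal conditions, generators, barriers and driving processes are met with equality, and the inequality $f^{1}(t,Y^{2}_{t})\le f^{2}(t,Y^{2}_{t})$ is trivial since $f^{1}=f^{2}$, while (H1) holds by assumption. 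Proposition \ref{stw2.1} then yields $Y^{1}_{t}\le Y^{2}_{t}$ for every $t\in[0,T]$. Swapping the roles of the two solutions gives the reverse inequality, so $Y^{1}\equiv Y^{2}$.

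Writing $Y:=Y^{1}=Y^{2}$ and subtracting equation (d) of the two solutions gives
\[
\int_{t}^{T}dR^{1}_{r}-\int_{t}^{T}dM^{1}_{r}
=\int_{t}^{T}dR^{2}_{r}-\int_{t}^{T}dM^{2}_{r},\qquad t\in[0,T],
\]
which, together with $R^{i}_{0}=M^{i}_{0}=0$, implies
\[
R^{1}_{t}-R^{2}_{t}=M^{1}_{t}-M^{2}_{t},\qquad t\in[0,T].
\]
The left-hand side belongs to ${}^{p}\VV_{0}$ (a predictable finite-variation process starting at $0$), while the right-hand side belongs to $\MM_{0,loc}$. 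A predictable local martingale of finite variation with initial value $0$ is indistinguishable from zero (equivalently, by uniqueness of the canonical decomposition of a special semimartingale). Hence $R^{1}=R^{2}$ and $M^{1}=M^{2}$, which proves uniqueness.

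The only non-routine point is the first step, and it is not really an obstacle here: the comparison proposition \ref{stw2.1} has been stated precisely so that taking the two sets of data equal produces both inequalities $Y^{1}\le Y^{2}$ and $Y^{2}\le Y^{1}$. Once $Y^{1}=Y^{2}$ is known, the identification of $M$ and $R$ is automatic from the structural assumption that $R$ is predictable and $M$ is a local martingale.
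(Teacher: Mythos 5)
Your proof is correct and is essentially the argument the paper intends: the corollary is stated immediately after Proposition \ref{stw2.1} precisely because applying that comparison result twice (with the roles of the two solutions exchanged) gives $Y^1=Y^2$, after which $R^1-R^2=M^1-M^2$ is a predictable finite variation local martingale null at $0$, hence identically zero. Nothing further is needed.
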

Let us consider the following hypothesis.
\begin{enumerate}
\item[(H6)] There exists $X\in \mathcal{V}^1\oplus\MM_{loc}$
such that
\[
L_t\le X_t\le U_t \quad\mbox{\rm{for a.e. }}\,t\in[0, T], \quad
f(\cdot,X)\in L^1(\FF).
\]
\end{enumerate}

In what follows $(Y^{n, m}, M^{n, m})$ is a solution of the BSDE
\begin{align}
\label{eq2.1} \nonumber Y^{n, m}_t&=\xi+\int_t^Tf(r,Y^{n,m}_r)\,dr
+\int_t^Tn(Y^{n, m}_r-L_r)^-\,dr\\
&\quad-\int_t^Tm(Y^{n,m}_r-U_r)^+\,dr +\int_t^TdV_r-\int_t^TdM^{n,
m}_r, \quad t\in[0,T]
\end{align}
and $(\bar{Y}^n, \bar{M}^n, \bar{A}^n)$ is a solution of the RBSDE
\begin{align}
\label{eq2.2} \nonumber
\bar{Y}^{n}_t&=\xi+\int_t^Tf(r,\bar{Y}^{n}_r)\,dr
+\int_t^Tn(\bar{Y}^{n}_r-L_r)^-\,dr\\
&\quad-\int_t^T\,d\bar{A}_r^n
+\int_t^TdV_r-\int_t^Td\bar{M}^{n}_r, \quad t\in[0,T]
\end{align}
with upper barrier $U$. Write
\[
\bar{K}_t^n=\int_0^tn(\bar{Y}^n_t-L_r)^-\,dr,\quad t\in [0,T].
\]

\begin{tw}
\label{tw2.1} Assume $\mbox{\rm{(H1)--(H4)}}$. Then there exists a
unique solution $(Y, M, R)$ of $\mbox{\rm{RBSDE}}(\xi, f+dV, L,
U)$ such that $R\in\mathcal{V}_0^1$ iff $\mbox{\rm{(H6)}}$ is
satisfied. Moreover, $\bar{Y}_t^n\nearrow Y_t$, $t\in[0,T]$,
$d\bar{A}^n\le d\bar{A}^{n+1}$, $n\in\BN$, $\bar{A}_t^n\nearrow
R_t^-$, $t\in[0,T]$, $\bar{Y}^n\rightarrow Y$,
$\int_s^{\cdot}d\bar{K}^n_r\rightarrow \int_s^{\cdot} dR^+_r$ in
ucp on $[s,\tau_s)$ for every $s\in [0,T)$, where
$\tau_s=\inf\{t>s; \Delta R^+_t>0\}$, and $Y^{n, m}_t\rightarrow
Y_t$, $t\in[0, T]$, as $n,m\rightarrow\infty$.
\end{tw}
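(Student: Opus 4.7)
The plan is to mirror the proof of Theorem \ref{tw1.1}, replacing the non-reflected penalty sequence $\{Y^n\}$ with the singly upper-reflected sequence $\{\bar Y^n\}$ from (\ref{eq2.2}), and to pass to the monotone limit. Necessity of (H6) is for free: if a solution $(Y,M,R)$ with $R\in\VV_0^1$ exists, then setting $X=Y$ provides the required separating semimartingale, since condition (d) gives both $Y\in\VV^1\oplus\MM_{loc}$ and $f(\cdot,Y)\in L^1(\FF)$, while $L\le Y\le U$ holds by condition (b). So I concentrate on sufficiency together with the four asserted convergences.

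For sufficiency, I would first apply the upper-barrier analogue of Theorem \ref{tw1.1} to the generator $f_n(t,y)=f(t,y)+n(y-L_t)^-$, obtaining $(\bar Y^n,\bar M^n,\bar A^n)$ together with $\bar K^n_t=\int_0^t n(\bar Y^n_r-L_r)^-\,dr$. The upper-barrier Mokobodski condition needed for this invocation is supplied by the $X$ of (H6): since $X\ge L$ the extra penalty vanishes at $X$, so $f_n(\cdot,X)=f(\cdot,X)\in L^1(\FF)$ and $X\le U$. Comparison (the upper-barrier version of Proposition \ref{stw1.1}) then gives $\bar Y^n\le\bar Y^{n+1}$, and the analogue of Proposition \ref{stw1.3} yields $d\bar A^n\le d\bar A^{n+1}$. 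Next, imitating the bounding argument at the beginning of the proof of Theorem \ref{tw1.1}, I would solve two auxiliary non-reflected BSDEs built from the semimartingale decomposition of $X$ (and its positive/negative finite-variation parts) to produce processes $\underline Y,\overline Y$ of class (D) sandwiching every $\bar Y^n$; together with (H1) this secures $\sup_n E\int_0^T|f(r,\bar Y^n_r)|\,dr<\infty$. Setting $Y=\lim_n\bar Y^n$ and applying Lemma \ref{lm2.A} (after the $\varphi$-transform localization used in Theorem \ref{tw1.1} to place the differences in $\mathcal{S}^2$) shows $Y$ admits a c\`adl\`ag modification. Simultaneously $\bar A^n\nearrow R^-$ for some $R^-\in{}^p\mathcal{V}_0^{1,+}$, and the c\`adl\`ag supermartingale $Y+\int_0^\cdot f(r,Y_r)\,dr+V+R^-$ has a Doob--Meyer decomposition producing $R^+\in{}^p\mathcal{V}_0^{1,+}$ and $M\in\MM_0$ such that $R:=R^+-R^-\in\VV_0^1$ and condition (d) holds.

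It remains to verify the two Skorokhod minimality conditions in (c) and the convergences. For the upper reflection, I would pass to the limit in $\int_0^T(\hat U_{t-}-\bar Y^n_{t-})\,d\bar A^n_t=0$ using the monotone convergence of $\bar A^n\nearrow R^-$ and Corollary \ref{wn1.3} (applied to the upper-reflected approximations) to identify $^pY$ at predictable jump times of $R^-$. For the lower reflection I would transcribe almost verbatim the final paragraph of the proof of Theorem \ref{tw1.1}, now with $\tau_s=\inf\{t>s:\Delta R^+_t>0\}$, exploiting the ucp convergence $\int_s^{\cdot}d\bar K^n\to\int_s^{\cdot}dR^+$ on $[s,\tau_s)$, the conclusion $Y_{\tau_s-}=\hat L_{\tau_s-}$ on $\{\tau_s<\infty\}$ via Corollary \ref{wn1.3}, and the elementary inequality $\int(\bar Y^n_--\hat L_-)\,d\bar K^n\le 0$. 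The convergence $Y^{n,m}\to Y$ then follows by two applications of Theorem \ref{tw1.1}: letting $m\to\infty$ with $n$ fixed gives $Y^{n,m}\nearrow\bar Y^n$ (upper-barrier penalization of $f+n(\cdot-L)^-$), and $\bar Y^n\nearrow Y$ has already been proved.

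The hard part will be the joint handling of the two Skorokhod conditions in the previous paragraph: one must localize on the predictable intervals carrying no jump of $R^+$ while using a different ucp convergence to control the jumps of $R^-$, and vice versa, and at the predictable jump times of $R^{\pm}$ the values of $Y_-$ have to be pinned down through the dual-predictable-projection formula (Corollary \ref{wn1.3}) applied in the two opposite directions. A smaller but genuine technicality is mutual singularity of $R^+$ and $R^-$, needed to make $R=R^+-R^-$ the Jordan decomposition of $dR$; this should follow because $R^+$ is carried by $\{Y_-=L_-\}$ and $R^-$ by $\{Y_-=U_-\}$, sets which are disjoint on $\{L<U\}$ and whose common part on $\{L=U\}$ carries no net variation.
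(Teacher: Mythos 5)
Your overall strategy --- penalize the lower barrier, reflect on the upper, and pass to the monotone limit in $n$ --- is the same as the paper's, and your treatment of necessity and of the lower Skorokhod condition is sound. But there is a genuine gap at the central quantitative step: you never establish the uniform bound $\sup_n E\bar{A}^n_T<\infty$. Monotonicity $d\bar{A}^n\le d\bar{A}^{n+1}$ together with a class (D) sandwich for $\bar{Y}^n$ does not by itself prevent $E\bar{A}^n_T$ from blowing up: taking expectations in the equation for $\bar{Y}^n$ only controls the difference $E\bar{K}^n_T-E\bar{A}^n_T$, not each term separately, and $E\bar{K}^n_T=nE\int_0^T(\bar{Y}^n_r-L_r)^-\,dr$ is a priori unbounded in $n$. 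Without this bound the limit $\bar{A}=\lim_n\bar{A}^n$ need not belong to $\mathcal{V}_0^1$, the total-variation convergence $|d\bar{A}^n-d\bar{A}|_{TV}\to 0$ (which you implicitly need to pass to the limit in $\int_0^T(\hat{U}_{t-}-\bar{Y}^n_{t-})\,d\bar{A}^n_t=0$) is unavailable, and the construction of $R$ collapses. This is exactly where (H6) does its quantitative work in the paper: one penalizes the semimartingale $X$ from above to build auxiliary solutions $\bar{X}^m$ with $Y^{n,m}\le\bar{X}^m$, hence $dA^{n,m}\le d\bar{D}^m$ with $\bar{D}^m_t=\int_0^tm(\bar{X}^m_r-U_r)^+\,dr$; then $E\bar{D}^m_T\to E\bar{D}_T<\infty$ and $EA^{n,m}_T\to E\bar{A}^n_T$ combine to give $E\bar{A}^n_T\le E\bar{D}_T$ uniformly in $n$. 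Your auxiliary BSDEs built from the decomposition of $X$ are used only to control $\sup_nE\int_0^T|f(r,\bar{Y}^n_r)|\,dr$, which is not enough.

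Two smaller points. First, your identification of $R^+$ and $R^-$ via ``mutual singularity because $R^+$ lives on $\{Y_-=L_-\}$ and $R^-$ on $\{Y_-=U_-\}$'' is not justified: the barriers are only progressively measurable, and on the contact set $\{L=U\}$ the claim that the common mass carries no net variation is precisely what has to be proved. The paper instead proves the reverse dominations $d\bar{A}\le dR^-$ and $d\bar{K}\le dR^+$ by a further penalization of the already-constructed solution (the processes $\tilde{Y}^m$, $\tilde{A}^m$) and then invokes minimality of the Jordan decomposition. Second, the asserted convergence $Y^{n,m}_t\to Y_t$ is a joint limit in $(n,m)$; your iterated argument ($m\to\infty$ then $n\to\infty$) only yields $\lim_n\lim_m$. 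You need the symmetric approximation $\underline{Y}^m$ (lower-reflected with upper penalization), the sandwich $\bar{Y}^n_t\le Y^{n,m}_t\le\underline{Y}^m_t$, and the comparison result to identify the two one-sided limits. (Also note $Y^{n,m}\searrow\bar{Y}^n$ as $m\to\infty$, not $\nearrow$.)
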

\begin{dow}
Assume that there exists a solution $(Y,M,R)$ of
RBSDE$(\xi,f+dV,L,U)$ such that $R\in\mathcal{V}_0^1$. Then
$f(\cdot,Y)\in L^1(\FF)$ by \cite[Lemma 2.3]{KR:JFA}, so (H6) is
satisfied with $X=Y$. Now assume (H6). By Theorem \ref{tw1.1}
there exists a unique solution of (\ref{eq2.2}) such that
$\bar{A}^n\in\mathcal{V}_0^1$. Moreover, by \cite[Theorem
2.7]{KR:JFA}, there exists a unique solution of (\ref{eq2.1}), and
by Theorem \ref{tw1.1},
\begin{equation}
\label{eq2.3} Y_t^{n, m}\searrow \bar{Y}_t^n, \quad t\in[0,T],
\quad EA_T^{n, m}\rightarrow E\bar{A}_T^n,
\end{equation}
where
\[
A^{n,m}_t=\int_0^tm(Y^{n, m}_r-U_r)^+\,dr,\quad t\in[0, T].
\]
By \cite[Proposition 2.1]{KR:JFA},
\begin{equation}
\label{eq3.bb}
\hat{Y}_t\le Y_t^{n, m}\le \check{Y}_t, \quad t\in[0, T],
\end{equation}
where $(\hat{Y},\hat{M},\hat{K})$ is a solution of
RBSDE$(\xi,f+dV,U)$ and  $(\check{Y},\check{M},\check{A})$ is a
solution of RBSDE$(\xi, f+dV, L)$. By (H6) there exist
$C\in\mathcal{V}_0^1$ and $N\in\MM_{0, loc}$ such that
\[
X_t=X_0-C_t+N_t, \quad t\in[0,T].
\]
Since $L_t\le X_t\le U_t$ for a.e. $t\in[0,T]$, we have
\begin{align*}
X_t&=X_T+\int_t^Tf(r, X_r)\,dr+ \int_t^Tf^-(r,
X_r)\,dr-\int_t^Tf^+(r, X_r)\,dr\\
&\quad+ \int_t^TdC_r+\int_t^TdV_r-\int_t^Tm(X_r-U_r)^+\,dr
-\int_t^TdN_r, \quad t\in[0, T].
\end{align*}
By \cite[Theorem 2.7]{KR:JFA} there exist a solution
$(\bar{X}^m,\bar{N}^m)$ of BSDE
\begin{align*}
\bar{X}_t^m&=X_T\vee\xi+\int_t^Tf(r, \bar{X}^m_r)\,dr
+\int_t^Tf^-(r, X_r)\,dr+\int_t^TdC_r\\
&\quad+\int_t^TdV_r-\int_t^Tm(\bar{X}^m_r-U_r)^+\,dr
-\int_t^Td\bar{N}^m_r, \quad t\in[0, T].
\end{align*}
By \cite[Proposition 2.1]{KR:JFA}, $\bar{X}^m_t\ge X_t$ for
$t\in[0,T]$, so $\bar{X}^m_t\ge L_t$ for a.e. $t\in[0,T]$, which
when combined with the above equation implies that
\begin{align*}
\bar{X}_t^m&=X_T\vee\xi+\int_t^Tf(r, \bar{X}^m_r)\,dr +
\int_t^Tf^-(r, X_r)\,dr+\int_t^TdC_r+\int_t^TdV_r\\
&\quad+\int_t^Tn(\bar{X}_t^m-L_r)^-\,dr
-\int_t^Tm(\bar{X}^m_r-U_r)^+\,dr- \int_t^Td\bar{N}^m_r, \quad
t\in[0,T].
\end{align*}
Therefore applying once  again \cite[Proposition 2.1]{KR:JFA} we
get
\begin{equation}
\label{eq2.4}
Y^{n, m}_t\le \bar{X}_t^m, \quad t\in[0, T].
\end{equation}
Write
\[
\bar{D}_t^m=\int_0^tm(\bar{X}_r^m-U_r)^+\,dr, \quad t\in[0,T].
\]
then by (\ref{eq2.4}),
\begin{equation}
\label{eqA.A} dA^{n, m}\le d\bar{D}^m \quad \mbox{on }[0,T],
\end{equation}
so by Theorem \ref{tw1.1},
\begin{equation}
\label{eq2.5}
\bar{X}_t^m\searrow\bar{X}_t, \quad t\in[0, T],\quad
E\bar{D}^m_T\rightarrow E\bar{D}_T,
\end{equation}
where $(\bar{X}, \bar{N}, \bar{D})$ is a solution of
RBSDE$(X_T\vee\xi, f+f^-_X+dC^++dV, U)$. From (\ref{eq2.5}) and
(\ref{eq2.3}) we conclude that
\begin{equation}
\label{eq2.6}
E\bar{A}^n_T\le E\bar{D}_T, \quad n\ge 1.
\end{equation}
By Proposition \ref{stw1.2}, for every $n\ge 1$,
\begin{equation*}
d\bar{A}^n\le d\bar{A}^{n+1}\quad\mbox{on }\,[0, T].
\end{equation*}
Therefore by (\ref{eq2.6}) and \cite[Lemma 2.2]{Peng} there exists
$\bar{A}\in{ }^p\mathcal{V}_0^1$ such that
\begin{equation}
\label{eq2.6b}
|d\bar{A}^n-d\bar{A}|_{TV}\rightarrow 0,
\end{equation}
where $|\cdot|_{TV}$ denotes the total variation norm on $[0, T]$.
Consequently,  by Theorem \ref{tw1.1},
\begin{equation}
\label{eq2.6c}
\bar{Y}_t^n\nearrow\bar{Y}_t, \quad t\in[0, T], \quad
E\bar{K}^n_T\rightarrow E\bar{K}_T,
\end{equation}
where $(\bar{Y}, \bar{M},\bar{K})$is a solution of
RBSDE$(\xi,f+dV+d\bar{A},L)$. In particular, $\bar{Y}_t\ge L_t$
for a.e. $t\in[0, T]$ and for every c\`adl\`ag process $\hat{L}$
such that $L_t\le\hat{L}_t\le\bar{Y}_t$ for a.e. $t\in[0,T]$,
\[
\int_0^T(\bar{Y}_{t-}-\hat{L}_{t-})\,d\bar{K}_t=0.
\]
On the other hand, since $(\bar{Y}^n, \bar{M}^n, \bar{A}^n)$ is a
solution of (\ref{eq2.2}), $\bar{Y}_t^n\le U_t$ for a.e. $t\in[0,
T]$ and for every c\`adl\`ag process $\hat{U}$ such that
$\bar{Y}_t^n\le\hat{U}_t\le U_t$ for a.e. $t\in[0, T]$,
\begin{equation}
\label{eq2.6a}
\int_0^T(\hat{U}_{t-}-\bar{Y}_{t-}^n)\,d\bar{A}_t^n=0.
\end{equation}
Let $\hat{U}$ be a c\`adl\`ag process such that
$\bar{Y}_t\le\hat{U}_t\le U_t$ for a.e. $t\in[0, T]$ (let us
recall that $\bar{Y}^n_t\le\bar{Y}_t$, $t\in[0, T]$). Let
$\sigma_s=\inf\{t>s; \Delta \bar{A}_t>0\}$ and let $\{S^s_p,\,p\ge
1\}$ be an announcing sequence for $\sigma_{s}$. Then
$\Delta\bar{A}_{\sigma_s}>0$ on $\{\sigma_s<\infty\}$. We may
assume that this inequality holds for every $\omega\in
\{\sigma_s<\infty\}$,  (\ref{eq2.6b}) holds for every
$\omega\in\Omega$ and that (\ref{eq2.6a}) holds for every
$\omega\in\Omega$ and $n\in\BN$. Therefore, thanks to
(\ref{eq2.6b}), for every $\omega\in\{\sigma_s<\infty\}$ there
exists $n_0(\omega)\in\BN$ such that
$(\Delta\bar{A}^n_{\sigma_s})(\omega)>0$ for $n\ge n_0(\omega)$.
From this and (\ref{eq2.6a}) we conclude that
$\bar{Y}^n_{\sigma_s-}(\omega)=\hat{U}_{\sigma_s-}(\omega)$, $n\ge
n_0(\omega)$, on $\{\sigma_s<\infty\}$. Since
$\bar{Y}_t^n\le\hat{U}_t$ for $t\in[0, T]$ and $\{\bar{Y}^n\}$ is
increasing,
$\bar{Y}^n_{\sigma_s-}(\omega)=\bar{Y}_{\sigma_s-}(\omega)
=\hat{U}_{\sigma_s-}(\omega)$ for $n\ge n_0(\omega)$ on
$\{\sigma_s<\infty\}$. Therefore for every $s\in [0,T)$ we have
\begin{equation}
\label{eq2.7}
(\hat{U}_{\sigma_s-}-Y_{\sigma_s-})\Delta\bar{A}_{\sigma_s}=0
\quad\mbox{on }\{\sigma_s<\infty\}.
\end{equation}
By (\ref{eq2.6b})--(\ref{eq2.6a}),
\[
0=\int_{s}^{s\vee S_p^s}
(\hat{U}_{t-}-\bar{Y}_{t-}^n)\,d\bar{A}_t^n
=\int_{s}^{s\vee S_p^s}
(\hat{U}_t-\bar{Y}_t^n)\,d\bar{A}_t^n\rightarrow
\int_{s}^{s\vee S_p^s}(\hat{U}_t-\bar{Y}_t)\,d\bar{A}_t.
\]
From this and (\ref{eq2.7}) we get
\begin{equation}
\label{eq2.8}
\int_0^T(\hat{U}_{t-}-\bar{Y}_{t-})\,d\bar{A}_t=0.
\end{equation}
Thus the triple $(\bar{Y},\bar{M}, \bar{R})$, where
$\bar{R}=\bar{K}-\bar{A}$, is a solution of RBSDE$(\xi, f+dV, L,
U)$. Now instead  of (\ref{eq2.2}) let us consider a solution
$(\underline{Y}^m,\underline{M}^m,\underline{K}^m)$ of the RBSDE
\begin{align*}
\underline{Y}^m_t&=\xi+\int_t^Tf(r, \underline{X}^m_r)\,dr
+\int_t^Td\underline{K}_r^m\\
&\quad-\int_t^Tm(\underline{Y}_r^m-U_r)^+\,dr
+\int_t^TdV_r-\int_t^Td\underline{M}^m_r, \quad t\in[0,T]
\end{align*}
with lower barrier $L$.
Repeating, with some obvious changes, the proofs of the first
parts of (\ref{eq2.3}), (\ref{eq2.6c}) we show that
\begin{equation}
\label{eq2.9}
\underline{Y}_t^m\nearrow\underline{Y}_t, \quad
Y^{n, m}_t\searrow\underline{Y}_t^m, \quad t\in[0, T],
\end{equation}
where $(\underline{Y}, \underline{M}, \underline{R})$ is a
solution of RBSDE$(\xi,f+dV, L,U)$ constructed analogously to
$(\bar{Y},\bar{M},\bar{R})$. By Proposition \ref{stw2.1},
$(\bar{Y},\bar{M},\bar{R})=(\underline{Y},
\underline{M},\underline{R})\equiv(Y,M,R)$, so that by
(\ref{eq2.3}) and (\ref{eq2.9}),
\[
\bar{Y}_t^n\le Y^{n, m}_t\le\underline{Y}^m_t, \quad t\in[0,T]
\]
and $Y^{n, m}_t\rightarrow Y_t$, $t\in[0, T]$, as $n,
m\rightarrow\infty$. We now show that $\bar{A}=R^-$,
$\bar{K}=R^+$. Observe that the triple $(Y, M, R^-)$ is a solution
of RBSDE$(\xi, f+dV+dR^+, U)$. Therefore by Theorem \ref{tw1.1},
\begin{equation}
\label{eq2.10} \tilde{Y}_t^m\searrow Y_t, \quad t\in[0, T], \quad
\sup_{t\in[s, s\vee S_p^s]}
\Big|\int_{s}^td\tilde{A}^m_r-\int_{s}^tdR^-_r\Big|
\rightarrow_{P}0,
\end{equation}
where the pair $(\tilde{Y}^n, \tilde{M}^n)$ is a solution of the
BSDE
\[
\tilde{Y}^m_t=\xi+\int_t^Tf(r, \tilde{Y}^m_r)\,dr
+\int_t^TdR^+_r-\int_t^Td\tilde{A}^m_r+\int_t^TdV_r-
\int_t^Td\tilde{M}^m_r, \quad t\in[0, T]
\]
and $\tilde{A}^m_t=\int_0^tm(\tilde{Y}^m_r-U_r)^+\,dr$,
$t\in[0,T]$. By (\ref{eq2.10}), $\tilde{Y}^m_t\ge L_t$ for a.e.
$t\in[0, T]$. Therefore
\begin{align*}
\tilde{Y}^m_t&=\xi+\int_t^Tf(r, \tilde{Y}^m_r)\,dr
+\int_t^Tn(\tilde{Y}_r^m-L_r)^-\,dr\\
&\quad +\int_t^TdR^+_r-\int_t^Td\tilde{A}^m_r+\int_t^TdV_r
-\int_t^Td\tilde{M}^m_r, \quad t\in[0,T]
\end{align*}
and hence,  by \cite[Proposition 2.1]{KR:JFA}, $\tilde{Y}^m_t\ge
Y^{n, m}_t$, $t\in[0,T]$. Consequently,
\begin{equation}
\label{eq2.11} dA^{n, m}\le d\tilde{A}^m\quad\mbox{on }\,[0,T]
\end{equation}
for $n,m\in\BN$. Since $d\bar{A}^m\le d\bar{A}$, Theorem
\ref{tw1.1} implies that
\[
\sup_{t\in[s, s\vee S_p^s]}
\left|\int_{s}^tdA^{n,m}_r-\int_{s}^td\bar{A}^n_r\right|
\rightarrow_{P}0
\]
as $m\rightarrow\infty$. This when combined with (\ref{eq2.6b})
and (\ref{eq2.10}), (\ref{eq2.11}) shows that for every $s\in[0,T)$,
\begin{equation}
\label{eq2.12}
d\bar{A}\le dR^- \quad\mbox{on }\,(s, \sigma_{s}).
\end{equation}
In the reasoning preceding (\ref{eq2.7}) we have showed that
$\bar{Y}^n_{\sigma_k-}(\omega)=\hat{U}_{\sigma_k-}(\omega)$  for
$n\ge n_0(\omega)$ on $ \{\sigma_s<\infty\}$. This implies that
$^p\bar{Y}_{\sigma_s}^n-\bar{Y}_{\sigma_s-}^n
\rightarrow{}^pY_{\sigma_s}-Y_{\sigma_s-}$ on $
\{\sigma_s<\infty\}$\,. Also
$\Delta\bar{A}^n_{\sigma_s}\rightarrow\Delta \bar{A}_{\sigma_s}$
on $ \{\sigma_s<\infty\}$ by (\ref{eq2.6b}). Therefore $\Delta
\bar{K}_{\sigma_s}=0$ on $ \{\sigma_s<\infty\}$. Since $dR^+\le
d\bar{K}$ by the minimality of the Jordan decomposition of $dR$,
we have $\Delta R^+_{\sigma_s}=0$ on $ \{\sigma_s<\infty\}$. Hence
$\Delta R^-_{\sigma_s}=\Delta\bar{A}_{\sigma_s}$  on $
\{\sigma_s<\infty\}$. This and (\ref{eq2.12}) show that
\[
d\bar{A}\le dR^-.
\]
In much the same way one can show that $d\bar{K}\le dR^+$.
Therefore by the minimality of the Jordan decomposition of the
measure $dR$, $dR^-=d\bar{A}$ and $dR^+=d\bar{K}$.
\end{dow}
\medskip

Let $L, U$ be c\`adl\`ag and let $(Y, M, R)$ be a solution of
RBSDE$(\xi,f+dV,L,U)$ such that $R\in\mathcal{V}_0^1$. For
$t\in[0,T]$ set
\[
\TT_t=\{\tau\in\TT; \quad t\le\tau\le T\}
\]
and consider the payoff function
\begin{equation}
\label{eq2.13} R_t(\sigma, \tau)=\int_t^{\sigma\wedge\tau}f(r,
Y_r)\,dr+
\int_t^{\sigma\wedge\tau}dV_r+L_\tau\mathbf{1}_{\{\tau<T,
\tau\le\sigma\}} +U_\sigma\mathbf{1}_{\{\sigma<\tau\}}
+\xi\mathbf{1}_{\{\sigma\wedge\tau=T\}}.
\end{equation}
The lower $\underline{W}$ and upper $\overline{W}$ values of the
stochastic game corresponding to $R_t(\cdot, \cdot)$ are defined
as
\[
\underline{W}_t=\esssup_{\tau\in \TT_t}\essinf_{\sigma\in\TT_t}
E(R_t(\sigma, \tau)|\FF_t) ,\quad
\overline{W}_t=\esssup_{\sigma\in \TT_t}\essinf_{\tau\in\TT_t}
E(R_t(\sigma, \tau)|\FF_t).
\]
The game is said to have a value if
$\underline{W}_t=\overline{W}_t$, $t\in[0,T]$.

\begin{stw}
\label{stw2.2} Let $(Y, M, R)$ be a solution of
$\mbox{\rm{RBSDE}}(\xi, f+dV, L, U)$. Then the stochastic game
associated with payoff \mbox{\rm(\ref{eq2.13})} has the value
equal to $Y$, i.e.
\[
Y_t=\overline{W}_t=\underline{W}_t, \quad t\in[0, T].
\]
\end{stw}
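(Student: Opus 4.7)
My approach is a Dynkin-game-style saddle-point argument, with $\varepsilon$-optimal stopping times $\tau^\varepsilon,\sigma^\varepsilon$ constructed via the minimality properties of the Jordan decomposition $R=R^+-R^-$. The aim is to establish
\[
Y_t\le\essinf_{\sigma\in\TT_t}E(R_t(\sigma,\tau^\varepsilon)|\FF_t)+C\varepsilon\le\underline{W}_t+C\varepsilon
\]
together with the dual bound $\overline{W}_t\le Y_t+C\varepsilon$, so that letting $\varepsilon\to0^+$ and combining with the trivial inequality $\underline{W}_t\le\overline{W}_t$ forces $Y_t=\underline{W}_t=\overline{W}_t$.

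The first step is a conditional representation of $Y_t$. Writing condition (d) of the definition of a solution between $t$ and $\sigma\wedge\tau$ and taking $E(\cdot|\FF_t)$ after the standard localisation along a fundamental sequence for $M$ (which is justified because $Y$ is of class (D), $R^+,R^-\in\VV_0^1$ by Theorem \ref{tw2.1}, and $f(\cdot,Y)\in L^1(\FF)$ by \cite[Lemma 2.3]{KR:JFA}) gives, for every $\sigma,\tau\in\TT_t$,
\[
Y_t=E\Bigl[Y_{\sigma\wedge\tau}+\int_t^{\sigma\wedge\tau}f(r,Y_r)\,dr+\int_t^{\sigma\wedge\tau}dV_r+(R^+_{\sigma\wedge\tau}-R^+_t)-(R^-_{\sigma\wedge\tau}-R^-_t)\,\Big|\,\FF_t\Bigr].
\]
Next I fix c\`adl\`ag processes $\hat{L},\hat{U}$ with $L\le\hat{L}\le Y\le\hat{U}\le U$ a.e., built from the semimartingale separator $X$ of (H6) (combined with $Y$ itself) and, for $\varepsilon>0$, set
\[
\tau^\varepsilon=\inf\{s\ge t\,:\,Y_s\le\hat{L}_s+\varepsilon\}\wedge T,\qquad \sigma^\varepsilon=\inf\{s\ge t\,:\,Y_s\ge\hat{U}_s-\varepsilon\}\wedge T.
\]
Applying the minimality condition (c) of RBSDE$(\xi,f+dV,L,U)$ with $\hat{L},\hat{U}$ forces $\mathrm{supp}\,dR^+\subset\{Y_-=\hat{L}_-\}$ and $\mathrm{supp}\,dR^-\subset\{Y_-=\hat{U}_-\}$, so $R^+$ is constant on $[t,\tau^\varepsilon)$ and $R^-$ is constant on $[t,\sigma^\varepsilon)$. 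Plugging $\tau=\tau^\varepsilon$ into the representation and using $Y_\sigma\le U_\sigma$ on $\{\sigma<\tau^\varepsilon\}$, $Y_{\tau^\varepsilon}\le L_{\tau^\varepsilon}+O(\varepsilon)$ on $\{\tau^\varepsilon\le\sigma,\tau^\varepsilon<T\}$, together with $R^+_{\sigma\wedge\tau^\varepsilon}-R^+_t=0$ on $\{\sigma<\tau^\varepsilon\}$ and $-(R^-_{\sigma\wedge\tau^\varepsilon}-R^-_t)\le 0$, yields the first displayed inequality uniformly in $\sigma\in\TT_t$; the argument with $\sigma^\varepsilon$ in the role of $\tau^\varepsilon$ gives the dual bound.

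The main technical obstacle I anticipate lies in turning the a.e.-in-$t$ inequalities $L\le\hat{L}\le Y\le\hat{U}\le U$ into pointwise bounds at the specific, possibly predictable, stopping times $\tau^\varepsilon$ and $\sigma^\varepsilon$; indeed $R$ may concentrate its mass precisely on such predictable times. The key tool here is Corollary \ref{wn1.3}: at a predictable jump of $R^+$ it identifies the left limit $Y_{\tau^\varepsilon-}$ with $\hat{L}_{\tau^\varepsilon-}$ (modulo the contributions of ${}^pV-V_-$), and combining this with a c\`adl\`ag regularisation of $L$ supplied by the semimartingale $X$ of (H6) absorbs the residual discrepancy between $L$ and $\hat{L}$ into the $\varepsilon$-error. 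The symmetric analysis for $R^-$ and $\hat{U}$ at $\sigma^\varepsilon$ closes the argument.
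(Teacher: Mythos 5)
Your strategy coincides with the paper's: the published proof of this proposition is a one-line deferral to the verification argument of \cite[Proposition 3.1]{Lepeltier}, which is exactly the Dynkin-game saddle-point argument you outline ($\varepsilon$-optimal stopping times, the Skorokhod/minimality condition forcing $R^+$ (resp.\ $R^-$) to be flat before $\tau^\varepsilon$ (resp.\ $\sigma^\varepsilon$), and the conditional representation of $Y_t$ between $t$ and $\sigma\wedge\tau$). Your first displayed identity and the localisation justifying it are correct.

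There is, however, a directional slip that breaks the key estimate as written. You define $\tau^\varepsilon=\inf\{s\ge t: Y_s\le\hat{L}_s+\varepsilon\}$ with $\hat{L}\ge L$, and then assert $Y_{\tau^\varepsilon}\le L_{\tau^\varepsilon}+O(\varepsilon)$ on $\{\tau^\varepsilon\le\sigma,\ \tau^\varepsilon<T\}$. What the definition of $\tau^\varepsilon$ actually yields is $Y_{\tau^\varepsilon}\le\hat{L}_{\tau^\varepsilon}+\varepsilon$, and since $\hat{L}\ge L$ this does \emph{not} control $Y_{\tau^\varepsilon}$ by $L_{\tau^\varepsilon}+\varepsilon$ --- which is what you need, because the payoff (\ref{eq2.13}) pays $L_\tau$, not $\hat{L}_\tau$; the gap $\hat{L}-L$ is not an $\varepsilon$-quantity and cannot be absorbed. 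The same problem occurs symmetrically for $\sigma^\varepsilon$ and $\hat{U}\le U$. The fix is to notice that the proposition is stated (see the paragraph preceding (\ref{eq2.13})) under the standing assumption that $L$ and $U$ are c\`adl\`ag; then $L_t\le Y_t\le U_t$ for a.e.\ $t$ upgrades by right-continuity to every $t$, so you may (and should) take $\hat{L}=L$ and $\hat{U}=U$ both in the minimality condition (c) and in the definition of $\tau^\varepsilon,\sigma^\varepsilon$. With that choice the argument closes: $Y_{s-}\ge L_{s-}+\varepsilon$ for $s\in(t,\tau^\varepsilon]$, so $dR^+$ puts no mass there; $Y_{\tau^\varepsilon}\le L_{\tau^\varepsilon}+\varepsilon$ follows from right-continuity at the debut; and your final paragraph --- the regularisation of an irregular $L$ via the separating semimartingale and the left-limit identification at predictable jumps --- becomes unnecessary (and as written it is on shaky ground anyway, since Corollary \ref{wn1.3} is proved for the one-barrier penalised limit, not for an arbitrary solution of the two-barrier equation).
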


\begin{dow}
It is enough to repeat step by step the proof of \cite[Proposition
3.1]{Lepeltier}.
\end{dow}

\nsubsection{Nonintegrable solutions of reflected BSDEs}
\label{sec4}

In Sections \ref{sec2} and \ref{sec3} under the assumption that
(H1)--(H4) are satisfied necessary and sufficient conditions for
the existence of a solution of reflected BSDE with one and two
barriers are formulated. In the case of one barrier the necessary
and sufficient condition (H5) relates the growth of the barrier
$L$ to the generator $f$. In the case of two barriers the
corresponding condition (H6) consists of two parts. The first one,
as in the case of one barrier, relates the growth of the lower
barrier $L$ and  upper barrier $U$ to $f$. The second one, known
as Mokobodski's condition, amounts to saying that there is some
semimartingale between $L$ and $U$. The question arises whether
the solution still exists if we get rid of the conditions relating
the growth of the barriers to $f$ and we only impose minimal
integrability conditions on $L,U$ ensuring Snell envelope
representation of a possible solution, i.e. ensuring that if a
solution exists, it is of class (D). In the case of Brownian
filtration and continuous barriers the question was investigated
in \cite{Kl:EJP}. It appears that the answer is positive but in
general the reflecting process may be nonintegrable for every
$q>0$.

\begin{tw}
\label{tw4.1} Assume that \mbox{\rm(H1)--(H4)} are satisfied and
$L$ is of class $\mbox{\rm{(D)}}$. Then there exists a unique
solution $(Y, M, K)$ of \mbox{\rm RBSDE}$(\xi,f+dV,L)$. Moreover,
$Y^n_t\nearrow Y_t$, $t\in[0, T]$, $Y^n\rightarrow Y$,
$\int_{s}^{\cdot}dK^n_r\rightarrow \int_{s}^{\cdot}dK_r$ in ucp on
$[s,\tau_{s})$ for every $s\in [0,T)$, where $\tau_s=\inf\{t>s;
\Delta K_t>0\}$.
\end{tw}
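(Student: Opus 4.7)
Uniqueness follows immediately from Corollary \ref{wn1.1}. For existence I would follow the penalization strategy of Theorem \ref{tw1.1}: take $(Y^n,M^n)$ solving the BSDE (\ref{eq1.7}) with $V^n\equiv V$. A standard application of \cite[Proposition 2.1]{KR:JFA} gives $Y^n\le Y^{n+1}$, so the pointwise limit $Y_t:=\lim_n Y^n_t$ exists. The decisive difference with Theorem \ref{tw1.1} is that (H5) is no longer available to bound $Y^n$ from above by an integrable semimartingale; that role must now be played by the hypothesis that $L$ itself belongs to class (D).

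For the uniform upper bound I would stop the penalized equation at $\sigma^n_t:=\inf\{s\ge t: Y^n_s\le L_s\}\wedge T$. On $[t,\sigma^n_t)$ the penalty $n(Y^n-L)^-$ vanishes, so optional sampling applied to (\ref{eq1.7}) (after the usual localization against a fundamental sequence for $M^n$) yields
\[
Y^n_t\le E\Big[L_{\sigma^n_t}\mathbf{1}_{\{\sigma^n_t<T\}}+\xi\mathbf{1}_{\{\sigma^n_t=T\}}+\int_t^{\sigma^n_t}f(r,Y^n_r)\,dr+\int_t^{\sigma^n_t}dV_r\,\Big|\,\FF_t\Big].
\]
Combining this with the one-sided estimate $f(r,y)\le f(r,0)+\mu y^+$ from (H1), and with the matching lower bound $Y^n\ge Y^0$ (where $Y^0$ solves the non-reflected BSDE$(\xi,f+dV)$, which exists and is of class (D) by \cite{BDHPS}), a Gronwall-type argument applied to the essential supremum of the right-hand side over $\tau\in\TT_t$ produces a c\`adl\`ag process $Z$ of class (D) with $|Y^n|\le Z$ uniformly in $n$. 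This is the main novelty compared to Theorem \ref{tw1.1}.

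With the uniform class-(D) domination in place, $Y$ itself is of class (D). Monotone convergence together with (H1)--(H3) gives $f(\cdot,Y)\in L^1(\FF)$, and Lemma \ref{lm1.1} then identifies $Y$ as the smallest supersolution of BSDE$(\xi,f_Y+dV)$ dominating $L$, which supplies predictable $K\in{}^p\mathcal{V}_0^+$ and a local martingale $M$ for which the desired integral equation holds. The minimality condition (c) in the definition of a solution and the ucp convergences $Y^n\to Y$, $\int_s^\cdot dK^n_r\to\int_s^\cdot dK_r$ on $[s,\tau_s)$ are then obtained as in the second half of the proof of Theorem \ref{tw1.1} after (\ref{eq1.18}): one localizes by an announcing sequence $\{T^s_m\}$ for the predictable time $\tau_s$, applies the UT criterion of \cite{MeminSlominski} with the stability result of \cite{Jacod} on those bounded intervals, and closes by invoking Corollary \ref{wn1.3} to force $Y_{\tau_s-}=\hat{L}_{\tau_s-}$ on $\{\tau_s<\infty\}$ whenever $\Delta K_{\tau_s}>0$.

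The main obstacle is precisely the second step: without (H5) one loses any global integrability of $K^n_T$, so the class-(D) envelope has to be produced without such a bound, and consequently all subsequent compactness and convergence arguments (UT, identification of the limit semimartingale, minimality at predictable jumps) must be executed \emph{locally} on the intervals $[s,T^s_m\vee s]$ that avoid the predictable jumps of $K$, using only the class-(D) control and the stopping-time representation above.
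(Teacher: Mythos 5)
There are two genuine gaps in your argument, both at the places where the absence of (H5) has to be compensated.

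First, the construction of the class (D) majorant does not work as written. Hypothesis (H1) is one-sided: taking $y'=0$ it gives $f(t,y)\le f(t,0)+\mu y$ only for $y>0$, and for $y<0$ it yields a \emph{lower} bound on $f(t,y)$ and no upper control at all (e.g.\ $f(t,y)=-y^3$ satisfies (H1) with $\mu=0$ but is unbounded above on $y<0$). So the estimate $f(r,y)\le f(r,0)+\mu y^{+}$ that your Gronwall step relies on is false, and the term $\int_t^{\sigma^n_t}f(r,Y^n_r)\,dr$ cannot be absorbed. In addition, replacing $Y^n_{\sigma^n_t}$ by $L_{\sigma^n_t}$ on $\{\sigma^n_t<T\}$ presupposes that the debut $\sigma^n_t$ of the progressive set $\{Y^n\le L\}$ is attained; since $L$ is only progressively measurable (not c\`adl\`ag), this is not justified. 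The paper avoids both problems by a comparison trick: it runs the penalization for the generator $f^{+}\ge f$, for which (H5) is \emph{automatically} satisfied (because $(f^{+})^{-}=0$ and any class (D) supermartingale dominating $L$ serves as $X$), so Theorem \ref{tw1.1} already yields a class (D) limit $\tilde Y$ with $\tilde K\in\mathcal{V}^{1,+}_0$, and $Y^0_t\le Y^n_t\le\tilde Y^n_t\le\tilde Y_t$ gives the uniform class (D) envelope.

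Second, your claim that monotone convergence gives $f(\cdot,Y)\in L^1(\FF)$, so that Lemma \ref{lm1.1} applies globally, is false in general. By \cite[Lemma 2.3]{KR:JFA} that integrability would imply (H5) with $X=Y$ and would force $K\in\mathcal{V}^{1,+}_0$ --- but Theorem \ref{tw4.1} is precisely the statement that a solution exists \emph{without} (H5), and the introduction to Section 4 stresses that $K$ may then fail to have any moments. Only the pathwise statement $t\mapsto f(t,Y_t)\in L^1(0,T)$ can be expected. The paper's remedy is a localization you are missing: with $\delta_k=\inf\{t\ge0:\int_0^tf^{-}(r,\tilde Y_r)\,dr\ge k\}\wedge T$, condition (H5) \emph{does} hold on $[0,\delta_k]$ with $X=\tilde Y$, Theorem \ref{tw1.1} is applied there with terminal value $Y_{\delta_k}$, and the solutions are glued using the stationarity of $\{\delta_k\}$ and uniqueness of the Doob--Meyer decomposition; the convergence statements (ucp on $[s,\tau_s)$, UT, minimality at predictable jumps) are then inherited from Theorem \ref{tw1.1} on each $[0,\delta_k]$ rather than re-proved. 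Your uniqueness step via Corollary \ref{wn1.1} is fine.
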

\begin{dow}
Let  $(\tilde{Y}^n, \tilde{M}^n)$ be a solution of the BSDE
\[
\tilde{Y}^n_t=\xi+\int_t^Tf^+(r, \tilde{Y}^n_r)\,dr+
\int_t^Tn(\tilde{Y}^n_r-L_r)^-\,dr+\int_t^TdV_r-\int_t^Td\tilde{M}^n_r,
\quad t\in[0, T].
\]
Let $X$ be a supermartingale of class (D) majorizing $L$ (for
instance we may take the solution of RBSDE$(\xi,0,L)$ as $X$).
Then the data $(\xi,f^+,V,L)$ satisfy (H1)--(H4) and (H5) with $X$
chosen above. Therefore by Theorem \ref{tw1.1},
\[
\tilde{Y}^n_t\nearrow\tilde{Y}_t, \quad t\in[0,T],
\]
where $(\tilde{Y}, \tilde{M}, \tilde{K})$ is a solution  of
RBSDE$(\xi, f^++dV, L)$ such that $\tilde{K}\in\VV_0^1$. By
\cite[Proposition 2.1]{KR:JFA}, $\{Y^n\}$ is nondecreasing and
\begin{equation}
\label{eq4.1}
Y^0_t\le Y_t^n\le\tilde{Y}_t^n\le\tilde{Y}_t, \quad t\in[0, T].
\end{equation}
Put
\[
Y_t=\sup_{n\ge 1}Y^n_t, \quad t\in[0,T]
\]
and $\xi^k_n=Y_{\delta_k}^n$, $\xi^k=Y_{\delta_k}$, where
\[
\delta_k=\inf\{t\ge 0;\, \int_0^tf^-(r, \tilde{Y}_r)\,dr\ge k\}
\wedge T.
\]
By (\ref{eq4.1}), $Y$ is of class (D), so $\xi^k_n\nearrow\xi^k$
a.s and in $L^1$ for each $k\ge 1$. Observe that the data
$(\xi^k_n, f+dV, L)$ satisfy  (H1)--(H6) on $[0,\delta_k]$ with
$X=\tilde{Y}$. Let $(Y^{n, (k)}, M^{n, (k)})$ be a solution of
(\ref{eq1.7}) on $[0, \delta_k]$ with terminal condition
$\xi^k_n$. It is clear that $(Y^{n, (k)}_t, M^{n, (k)}_t)=(Y^n_t,
M^n_t)$, $t\in[0, \delta_k]$. By Theorem \ref{tw1.1}, $(Y^{n,
(k)}, M^{n, (k)}, K^{n, (k)})\rightarrow(Y^{(k)}, M^{(k)},
K^{(k)})$ on $[0, \delta_k]$, where $(Y^{(k)}, M^{(k)}, K^{(k)})$
is a solution of RBSDE$(\xi^k, f+dV, L)$ on $[0, \delta_k]$ and
$K^{n, (k)}_t=\int_0^tn(Y^{n, (k)}_r-L_r)^-\,dr$,
$t\in[0,\delta_k]$. Of course, $Y^{(k)}_t=Y_t$, $t\in[0,
\delta_k]$. Observe that $\{\delta_k\}$ is stationary, i.e.
\[
P(\liminf_{k\rightarrow\infty}\{\delta_k=T\})=1.
\]
Also observe that from the fact that $Y^{(k)}_t=Y_t^{(k+1)}$ for
$t\in[0,\delta_k]$ and the uniqueness of the Doob-Meyer
decomposition it follows that
\[
(Y^{(k+1)}_t, M^{(k+1)}_t, K^{(k+1)}_t) =(Y^{(k)}_t, M^{(k)}_t,
K^{(k)}_t), \quad t\in[0,\delta_k]
\]
for every $k\ge 1$. Therefore we may define process $M, K$ on
$[0,T]$ by putting
\[
M_t=M_t^{(k)}, \quad K_t=K_t^{(k)}, \quad t\in[0,\delta_k].
\]
It is clear that the triple $(Y,M,K)$ is a solution of
RBSDE$(\xi,f+dV,K)$.
\end{dow}
\medskip

The following hypothesis called the Mokobodski condition in the
literature.
\begin{enumerate}
\item[(H7)] There exists a process $X\in\VV^1\oplus\MM_{loc}$
such that
\[
L_t\le X_t\le U_t\quad\mbox{for a.e. }\,t\in[0, T].
\]
\end{enumerate}

\begin{tw}
\label{tw4.2} Assume $\mbox{\rm{(H1)--(H4), (H7)}}$. Then there
exists a unique solution $(Y, M, R)$ of
$\mbox{\rm{RBSDE}}(\xi,f+dV,L,U)$. Moreover, $\bar{Y}_t^n\nearrow
Y_t$, $Y_t^{n, m}\rightarrow Y_t$, $t\in[0, T]$,
$\bar{A}_t^n\nearrow R^{-}_t$, $\bar{Y}^n\rightarrow Y$,
$\int_s^{\cdot}d\bar{K}^n_r\rightarrow \int_s^{\cdot} dR^+_r$ in
ucp on $[s,\tau_s)$ for every $s\in [0,T)$, where
$\tau_s=\inf\{t>s; \Delta R^+_t>0\}$, and $Y^{n, m}_t\rightarrow
Y_t$, $t\in[0, T]$, as $n,m\rightarrow\infty$.
\end{tw}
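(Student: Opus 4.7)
The plan is to reduce Theorem \ref{tw4.2} to Theorem \ref{tw2.1} by a localization argument that parallels the one used in the proof of Theorem \ref{tw4.1}. The hypothesis (H7) differs from (H6) only in dropping the integrability $f(\cdot,X)\in L^1(\FF)$; the strategy is therefore to introduce a stationary sequence $\{\delta_k\}$ of stopping times on which this integrability is automatically restored, apply Theorem \ref{tw2.1} on each interval $[0,\delta_k]$, and patch the resulting local solutions into a global one.

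To construct $\{\delta_k\}$, write $X=X_0-C+N$ for the decomposition of the Mokobodski semimartingale from (H7), with $C\in\VV^1$ and $N\in\MM_{loc}$, let $\{\sigma_k\}$ be a localizing sequence for $N$, and set
\[
\rho_k=\inf\{t\ge 0:|X_t|\vee|X_{t-}|\ge k\},\qquad \delta_k=\sigma_k\wedge\rho_k\wedge T.
\]
Since $X$ is c\`adl\`ag and $N\in\MM_{loc}$, the sequence $\{\delta_k\}$ is stationary. On $[0,\delta_k)$ one has $|X|\le k$, and the monotonicity (H1), which makes $y\mapsto f(t,y)-\mu y$ nonincreasing, combined with (H2) and (H4) yields
\[
\int_0^{\delta_k}|f(r,X_r)|\,dr\le\int_0^T\big(|f(r,0)|+|f(r,k)|+|f(r,-k)|\big)\,dr+2|\mu|kT<\infty,
\]
so the data satisfy the full hypothesis (H6) on $[0,\delta_k]$ with the same $X$.

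With $\{\delta_k\}$ in place, I would follow the patching scheme of Theorem \ref{tw4.1}. First, construct the penalized BSDE solutions $Y^{n,m}$ from (\ref{eq2.1}), which are well-defined under (H1)--(H4) by \cite[Theorem 2.7]{KR:JFA} and monotone in $n$ and $m$ by \cite[Proposition 2.1]{KR:JFA}, and take the monotone double limit $Y=\sup_n\inf_m Y^{n,m}$. A priori uniform class (D) bounds on $\{Y^{n,m}\}$ on each $[0,\delta_k]$ are obtained by Tanaka--Meyer estimates for $(Y^{n,m}-X)^{\pm}$, exploiting the sign properties $(X-L)^-=(X-U)^+=0$ to absorb both penalty terms and using (H1) to control the $f$-term. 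With this bound, $Y_{\delta_k}\in L^1(\FF_{\delta_k})$ is a legitimate terminal condition, and Theorem \ref{tw2.1} applied on $[0,\delta_k]$ furnishes a unique solution $(Y^{(k)},M^{(k)},R^{(k)})$ of RBSDE$(Y_{\delta_k},f+dV,L,U)$ on $[0,\delta_k]$, together with the announced convergences of $\bar{Y}^n$, $\bar{A}^n$, $\bar{K}^n$, and $Y^{n,m}$ on that interval. Uniqueness of the Doob--Meyer decomposition combined with Proposition \ref{stw2.1} forces compatibility of the local solutions across $k$, producing the global triple $(Y,M,R)$ on $[0,T]$; global uniqueness comes from Proposition \ref{stw2.1}.

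The main obstacle is the uniform class (D) estimate on $\{Y^{n,m}\}$ over $[0,\delta_k]$: it plays the same role that the auxiliary majorant $\tilde{Y}$ played in Theorem \ref{tw4.1}, but here one must simultaneously absorb both the lower-reflection and upper-reflection penalties, which is precisely what the sign properties of $X$ relative to $L$ and $U$ make possible. Once this estimate is available, the convergence statements on $[s,\tau_s)$ follow by transporting the corresponding ones from Theorem \ref{tw2.1} through the patching, since each interval $[s,\tau_s)$ is eventually contained in some $[0,\delta_k]$.
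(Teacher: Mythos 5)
Your overall strategy is the same as the paper's: localize by a stationary sequence $\{\delta_k\}$ so that (H6) is restored on $[0,\delta_k]$, run the argument of Theorem \ref{tw2.1} there with terminal value $Y_{\delta_k}$, and patch using stationarity, exactly as in the proof of Theorem \ref{tw4.1}. However, your choice of $\delta_k$ does not actually restore (H6), and this is a genuine gap. Hypothesis (H6) requires $f(\cdot,X)\in L^1(\FF)$, i.e. $E\int_0^{\delta_k}|f(r,X_r)|\,dr<\infty$, whereas your estimate
\[
\int_0^{\delta_k}|f(r,X_r)|\,dr\le\int_0^T\big(|f(r,k)|+|f(r,-k)|\big)\,dr+2|\mu|kT
\]
only dominates the left-hand side by a random variable that is a.s.\ finite: (H2) gives $t\mapsto f(t,\pm k)\in L^1(0,T)$ pathwise, but says nothing about $E\int_0^T|f(r,\pm k)|\,dr$, and (H4) provides integrability in expectation only for $y=0$. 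So stopping at $\sigma_k\wedge\rho_k$ yields pathwise finiteness of $\int_0^{\delta_k}|f(r,X_r)|\,dr$ but not membership in $L^1(\FF)$, and Theorem \ref{tw2.1} cannot be invoked on $[0,\delta_k]$ as you claim.

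The repair is the one the paper uses (and which already appears in the proof of Theorem \ref{tw4.1}): take
\[
\delta_k=\inf\Big\{t\ge 0:\ \int_0^t|f(r,X_r)|\,dr\ge k\Big\}\wedge T,
\]
so that $\int_0^{\delta_k}|f(r,X_r)|\,dr\le k$ deterministically and (H6) holds on $[0,\delta_k]$ with the same $X$ (no localization of $N$ or truncation of $X$ is needed, since (H7) already places $X$ in $\VV^1\oplus\MM_{loc}$ as (H6) requires). Your pathwise bound via (H1)--(H2) and the boundedness of the c\`adl\`ag paths of $X$ is then exactly the ingredient needed to show that this sequence is stationary, so your computation is not wasted --- it is just deployed for the wrong purpose. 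The remaining elements of your proposal (the two-sided comparison $\hat{Y}\le Y^{n,m}\le\check{Y}$ from Theorem \ref{tw4.1} to get class (D) bounds and an integrable terminal value $Y_{\delta_k}$, compatibility of the local solutions via uniqueness, and transport of the ucp convergences on $[s,\tau_s)$ through the patching) are consistent with the paper's intended argument.
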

\begin{dow}
The existence of a solution $(\bar{Y}^n, \bar{M}^n, \bar{A}^n)$ of
(\ref{eq2.2}) follows from Theorem \ref{tw4.1}. Let
\[
\delta_k=\inf\{t\ge 0; \,\int_0^t|f(r, X_r)|\,dr\ge k\}.
\]
Repeating the proof of Theorem \ref{tw2.1} one can first prove the
existence of  solutions of RBSDE$(\xi,f+dV,L,U)$ on the intervals
$[0,\delta_k]$. Then using these solutions and the fact that
$\{\delta_k\}$ is stationary one can construct a solution on the
whole interval $[0,T]$ (see the proof of Theorem \ref{tw4.1} for
details).
\end{dow}

\begin{stw}
Let $(Y, M, R)$ be a solution of \mbox{\rm RBSDE}$(\xi,f+dV,L,U)$.
Then for any c\`adl\`ag processes $\hat{L}, \hat{U}$ such that
$L_t\le\hat{L}_t\le Y_t\le\hat{U}_t\le U_t$ for a.e. $t\in[0, T]$,
\[
\Delta
R_{\tau_s}^+=(^pY_{\tau_s}+{}^pV_{\tau_s}-(\bar{L}_{\tau_s-}+
V_{\tau_s-}))^-\quad\mbox{on} \quad  \{\tau_s<\infty\},
\]
\[
\Delta R_{\sigma_s}^-
=(^pY_{\sigma_s}+{}^pV_{\sigma_s}-(\bar{U}_{\sigma_s-}
+V_{\sigma_s-}))^+\quad\mbox{on}\quad  \{\sigma_s<\infty\}
\]
for every $s\in [0,T)$, where
\[
\tau_s=\inf\{t>s; \Delta R^+_t>0\},\quad \sigma_s=\inf\{t>s;\Delta
R^{-}_t>0\}.
\]
\end{stw}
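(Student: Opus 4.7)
The plan is to derive the two jump formulas from two ingredients: the minimality condition of the Jordan decomposition, which pins down $Y_{-}$ on the support of $dR^{+}$ (resp.\ $dR^{-}$), and a direct computation of $\Delta R$ at a predictable stopping time via predictable projections. I would work out the $\tau_{s}$ formula in detail; the $\sigma_{s}$ formula then follows by the symmetry $Y\mapsto -Y$. First, apply condition~(c) of the definition with the given $\hat L$: $\int_{0}^{T}(Y_{t-}-\hat L_{t-})\,dR^{+}_{t}=0$. Since $Y_{t-}\ge\hat L_{t-}$ after passing to left limits and $dR^{+}\ge 0$, the measure $dR^{+}$ is concentrated on $\{Y_{-}=\hat L_{-}\}$; on $\{\tau_{s}<\infty\}$ it has an atom at $\tau_{s}$ of mass $\Delta R^{+}_{\tau_{s}}>0$, so $Y_{\tau_{s}-}=\hat L_{\tau_{s}-}$ there. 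By mutual singularity of the Jordan parts, $\Delta R^{-}_{\tau_{s}}=0$, hence $\Delta R_{\tau_{s}}=\Delta R^{+}_{\tau_{s}}$ on $\{\tau_{s}<\infty\}$.

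For the second ingredient, take the jump at $\tau_{s}$ in the defining equation of $\mbox{RBSDE}(\xi,f+dV,L,U)$. Since the Lebesgue integral $\int_{\cdot}^{T}f(r,Y_{r})\,dr$ is continuous, $\Delta Y_{\tau_{s}}=-\Delta V_{\tau_{s}}-\Delta R_{\tau_{s}}+\Delta M_{\tau_{s}}$. Conditioning on $\FF_{\tau_{s}-}$ and using that $R\in{}^{p}\VV_{0}$ (so $\Delta R_{\tau_{s}}$ is $\FF_{\tau_{s}-}$-measurable), that $E(\Delta M_{\tau_{s}}\,|\,\FF_{\tau_{s}-})=0$ (standard for local martingales at predictable times, after localization along a fundamental sequence for $M$), and the definitions of ${}^{p}Y_{\tau_{s}}$ and ${}^{p}V_{\tau_{s}}$ as predictable projections, I obtain
\[
\Delta R_{\tau_{s}}=Y_{\tau_{s}-}-\bigl({}^{p}Y_{\tau_{s}}+{}^{p}V_{\tau_{s}}-V_{\tau_{s}-}\bigr).
\]
Inserting $Y_{\tau_{s}-}=\hat L_{\tau_{s}-}$ from the previous step yields $\Delta R^{+}_{\tau_{s}}=(\hat L_{\tau_{s}-}+V_{\tau_{s}-})-({}^{p}Y_{\tau_{s}}+{}^{p}V_{\tau_{s}})$, which is strictly positive on $\{\tau_{s}<\infty\}$ and therefore coincides with $\bigl({}^{p}Y_{\tau_{s}}+{}^{p}V_{\tau_{s}}-(\hat L_{\tau_{s}-}+V_{\tau_{s}-})\bigr)^{-}$, as claimed.

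The $\sigma_{s}$ formula would follow from the symmetry $Y\mapsto -Y$: the triple $(-Y,-M,-R)$ satisfies the reflected BSDE with terminal $-\xi$, generator $(r,y)\mapsto -f(r,-y)$, drift $-V$, lower barrier $-U$ and upper barrier $-L$; under this transformation $R^{+}$ and $R^{-}$ swap roles, $\sigma_{s}$ plays the role of the ``$\tau_{s}$'' of the transformed problem, and $-\hat U$ plays the role of $\hat L$. Applying the $\tau_{s}$-formula just derived and unfolding the signs converts $(\cdot)^{-}$ into $(\cdot)^{+}$ and produces the stated expression with $\hat U$. The real delicate point in the whole argument is the predictable-projection step, specifically the identity $E(\Delta M_{\tau_{s}}\,|\,\FF_{\tau_{s}-})=0$ and the interchange of limits needed to justify it for a local martingale; everything else is a direct substitution into the minimality and jump relations.
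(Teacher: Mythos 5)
Your argument is correct and is precisely the computation the paper intends: its own proof is the one-liner ``follows directly from the definition'', and the two ingredients you isolate --- minimality forcing $Y_{\tau_s-}=\hat L_{\tau_s-}$ wherever $dR^+$ has an atom, and the predictable-projection identity ${}^pY_{\tau_s}-Y_{\tau_s-}=-\Delta R_{\tau_s}-({}^pV_{\tau_s}-V_{\tau_s-})$ obtained from ${}^p(\Delta M)=0$ at the predictable time $\tau_s$ --- are exactly those the author uses explicitly in the proofs of Theorem \ref{tw1.1} and Proposition \ref{stw1.3}. The only point worth adding is that on the (possibly nonempty) part of $\{\tau_s<\infty\}$ where $\Delta R^{+}_{\tau_s}=0$ the identity still holds, since there $\Delta R_{\tau_s}\le 0$ gives ${}^pY_{\tau_s}+{}^pV_{\tau_s}\ge Y_{\tau_s-}+V_{\tau_s-}\ge \hat L_{\tau_s-}+V_{\tau_s-}$, so both sides vanish.
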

\begin{proof}
Follows directly from the definition of a solution of
RBSDE$(\xi,f+dV,L,U)$.
\end{proof}

\begin{wn}
Let $(Y,M,R)$ be a solution of \mbox{\rm RBSDE}$(\xi,f+dV,L,U)$.
Assume that $L_T\le\xi\le U_T$, $U,L$ are c\`adl\`ag, $\FF$ is
quasi-left continuous and the jumps of $L,U V$ are totally
inaccessible. Then $R$ is continuous.
\end{wn}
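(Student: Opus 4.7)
The plan is to apply the preceding proposition with $\hat L=L$ and $\hat U=U$ (admissible since $L,U$ are c\`adl\`ag, $L\le Y\le U$ a.e.\ on $[0,T)$, while the endpoint hypothesis $L_T\le\xi\le U_T$ takes care of $t=T$) to control the jumps of $R^+$ and $R^-$ at their first jump times
\[
\tau_s=\inf\{t>s;\,\Delta R^+_t>0\},\qquad \sigma_s=\inf\{t>s;\,\Delta R^-_t>0\},\qquad s\in[0,T),
\]
and to show these jumps vanish. Because $R\in{}^p\VV_0$ is predictable, so are $R^+$ and $R^-$ (by minimality of the Jordan decomposition), and consequently $\tau_s,\sigma_s$ are predictable stopping times.

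First I would exploit the totally inaccessible jumps of $V$: at any predictable $\tau$ one has $\Delta V_\tau=0$ a.s., hence ${}^pV_\tau=V_{\tau-}$ a.s. The jump identities from the preceding proposition therefore collapse to
\[
\Delta R^+_{\tau_s}=({}^pY_{\tau_s}-L_{\tau_s-})^{-},\qquad \Delta R^-_{\sigma_s}=({}^pY_{\sigma_s}-U_{\sigma_s-})^{+}.
\]
It then suffices to show ${}^pY_{\tau_s}\ge L_{\tau_s-}$ on $\{\tau_s<\infty\}$ and ${}^pY_{\sigma_s}\le U_{\sigma_s-}$ on $\{\sigma_s<\infty\}$. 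Right-continuity of $Y,L,U$ combined with $L\le Y\le U$ a.e.\ and $L_T\le\xi\le U_T$ gives $L_t\le Y_t\le U_t$ for every $t\in[0,T]$ a.s.; totally inaccessible jumps of $L$ (resp.\ $U$) then force $L_{\tau_s}=L_{\tau_s-}$ (resp.\ $U_{\sigma_s}=U_{\sigma_s-}$) a.s.\ at those predictable times; and since $Y$ is of class (D), the predictable projection identity ${}^pY_\tau=E(Y_\tau|\FF_{\tau-})$ is available, so conditioning $Y_{\tau_s}\ge L_{\tau_s}=L_{\tau_s-}$ on $\FF_{\tau_s-}$ (and symmetrically for $U$) delivers the required inequalities.

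Both jump sizes are thus zero on $\{\tau_s<\infty\}$ and $\{\sigma_s<\infty\}$, which contradicts the very definition of $\tau_s,\sigma_s$ unless $P(\tau_s<\infty)=P(\sigma_s<\infty)=0$ for every $s\in[0,T)$. Hence $R^+$ and $R^-$ have no jumps on $(0,T]$; together with $R^\pm_0=0$ this yields continuity of $R^+$ and $R^-$, and therefore of $R=R^+-R^-$. I expect the main technical point to be the passage from the pointwise inequality $Y\ge L$ and totally inaccessible jumps of $L$ to the projected inequality ${}^pY_{\tau_s}\ge L_{\tau_s-}$ at the predictable time $\tau_s$; once this is in hand, the rest is essentially a bookkeeping application of the jump formulas in the preceding proposition.
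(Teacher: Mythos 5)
Your proof is correct and is exactly the derivation the paper intends: the corollary is stated without proof as an immediate consequence of the preceding proposition, and you supply precisely the missing details (taking $\hat L=L$, $\hat U=U$; using total inaccessibility of the jumps of $V,L,U$ to get $\Delta V_{\tau_s}=\Delta L_{\tau_s}=0$ and $\Delta U_{\sigma_s}=0$ at the predictable times $\tau_s,\sigma_s$; and combining ${}^pY_\tau=E(Y_\tau\mid\FF_{\tau-})$ with the everywhere inequality $L\le Y\le U$, which the endpoint hypothesis $L_T\le\xi\le U_T$ completes at $t=T$, to make both jump formulas vanish). The only caveat, which you inherit from the paper's own use of $\tau_s,\sigma_s$ throughout (e.g.\ in the proof of Theorem \ref{tw1.1}), is the tacit assumption that $\Delta R^{+}_{\tau_s}>0$ on $\{\tau_s<\infty\}$, i.e.\ that the infimum defining $\tau_s$ is attained; this is not a defect of your argument relative to the paper's.
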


\begin{tw}
Let $(Y,M,R)$ be a solution of RBSDE$(\xi,f+dV,L,U)$ with $L$ of
the form
\begin{equation}
\label{eq4.ls}
L_t=L_0-\int_0^t\, dA_r+\int_0^t\, dN_r,\quad t\in [0,T]
\end{equation}
for some $A\in\mathcal{V}_0,\, N\in\mathcal{M}_{0,loc}$. Then
\[
dR^{+}_t\le\mathbf{1}_{\{Y_{t-}=L_{t-}\}}(f(t,L_t)\,dt+dV^{p}_t-dA^{p}_t)^{+},
\]
where $V^p,A^p$ are dual predictable projections of $V$ and $A$,
respectively.
\end{tw}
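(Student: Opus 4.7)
The plan is to combine the minimality of $R^+$ with the drift-positivity of the non-negative semimartingale $X:=Y-L$. Since the given decomposition makes $L$ c\`adl\`ag, taking $\hat{L}=L$ in condition (c) of the definition of RBSDE$(\xi,f+dV,L,U)$ gives $\int_0^T (Y_{t-}-L_{t-})\,dR^+_t=0$, so the measure $dR^+$ is concentrated on the predictable set $\{Y_-=L_-\}$. This accounts for the indicator factor in the statement and reduces the task to bounding the restriction of $dR^+$ to this set.

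Using the BSDE for $Y$ together with $L = L_0 - A + N$, and decomposing $V = V^p + (V-V^p)$, $A = A^p + (A-A^p)$ into their dual predictable projections and local-martingale remainders, the predictable finite-variation part $B^X$ of $X$ is
\[
B^X_t = -\int_0^t f(r,Y_r)\,dr - V^p_t + A^p_t - R^+_t + R^-_t.
\]
Since $Y$ and $L$ have at most countably many jumps, $Y_t = L_t$ for $dt$-a.e.\ $t$ on $\{Y_-=L_-\}$, hence $f(t,Y_t)\,dt = f(t,L_t)\,dt$ there. The key input is the standard fact that for any non-negative c\`adl\`ag special semimartingale with canonical decomposition $X = X_0 + M^X + B^X$, the signed measure $\mathbf{1}_{\{X_{t-}=0\}}\,dB^X_t$ is non-negative. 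This follows from Tanaka's formula applied to $X^-\equiv 0$, observing that the local time at zero and all jump corrections are non-negative; alternatively, at a predictable stopping time $\tau$ with $X_{\tau-}=0$ one uses ${}^pX_\tau = X_{\tau-}+\Delta B^X_\tau\ge 0$ to conclude $\Delta B^X_\tau\ge 0$, with localization handling the continuous part.

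Applying the drift-positivity principle to our $X$ on $\{Y_-=L_-\}$ yields a pointwise inequality controlling the signed combination $dR^+_t - dR^-_t$ by the remaining drift terms involving $f(t,L_t)\,dt$, $dV^p_t$ and $dA^p_t$. To isolate $dR^+$ I would then exploit the mutual singularity $dR^+\perp dR^-$ built into the Jordan decomposition: $dR^-$ vanishes on the support of $dR^+$, so the previous inequality reduces on this support to the claimed bound after taking positive parts and using $dR^+\ge 0$. The main technical obstacle is the rigorous verification of the drift-positivity lemma: the predictable-jump component follows directly from the identity ${}^pX_\tau\ge 0$ at predictable times where $X_-$ vanishes, but one needs a section-theorem argument (or an $\varepsilon$-approximation of the Tanaka local time) to handle the absolutely continuous and singular-continuous parts of $B^X$; once this is in place, the rest of the proof is algebraic manipulation of the decomposition of $X$ and the Jordan decomposition of $R$.
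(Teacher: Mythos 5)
Your proposal is correct and takes essentially the same route as the paper: the paper applies the Tanaka--Meyer formula to $(Y-L)^+=Y-L$ and compares with the raw decomposition of $Y-L$, which is exactly the Tanaka-based proof of your drift-positivity lemma for the non-negative special semimartingale $Y-L$, and it then concludes, as you do, from the minimality condition (c) with $\hat L=L$ and the mutual singularity of $dR^+$ and $dR^-$. Note only that your (correct) decomposition $B^X_t=-\int_0^t f(r,Y_r)\,dr-V^p_t+A^p_t-R^+_t+R^-_t$, like the paper's own computation, actually yields $dR^+_t\le \mathbf{1}_{\{Y_{t-}=L_{t-}\}}\bigl(-f(t,L_t)\,dt-dV^p_t+dA^p_t\bigr)^+$, i.e.\ $(\,\cdot\,)^-$ of the expression displayed in the theorem rather than $(\,\cdot\,)^+$ --- a sign slip in the printed statement, not a gap in your argument.
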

\begin{dow}
By the Tanaka-Meyer formula (see \cite[Theorem IV.70]{Protter}),
\begin{align*}
(Y_t-L_t)^+&=(Y_0-L_0)^+ -\int_0^t\mathbf{1}_{\{Y_{r-}>L_{r-}\}}f(r,Y_r)\,dr
-\int_0^t \mathbf{1}_{\{Y_{r-}>L_{r-}\}} d(V_r-A_r-R^{-}_r)\\
&\quad -\int_0^t\mathbf{1}_{\{Y_{r-}>L_{r-}\}} dR^{+}_r
-\frac12 L^0_t(S)+J_t+\int_0^t\mathbf{1}_{\{Y_{r-}>L_{r-}\}} d(M_r-N_r),
\end{align*}
where
\[
J_{t}^{+}=\sum_{0<s\le t}(\varphi(S_{s})-\varphi(S_{s-})
-\varphi'(S_{s-})\Delta S_{s}),\quad
S_{t}=Y_t-L_t,\quad \varphi(x)=x^{+}
\]
and $\varphi'$ is the left derivative of $\varphi$. Observe that
$J$  is an increasing process. By the definition of solution of
RBSDE, $S_t\ge 0$ for $t\in [0,T]$. Therefore we conclude from the
preceding equation and (\ref{eq4.ls}) that
\begin{align*}
&\int_0^t\mathbf{1}_{\{Y_{r-}=L_{r-}\}}f(r,Y_r)\,dr +\int_0^t
\mathbf{1}_{\{Y_{r-}=L_{r-}\}} d(V_r-A_r-R^{-}_r)\\
&\qquad+\int_0^t\mathbf{1}_{\{Y_{r-}=L_{r-}\}} dR^{+}_r+\frac12
L^0_t(S)+J_t-\int_0^t\mathbf{1}_{\{Y_{r-}=L_{r-}\}}\,d(M_r-N_r)=0.
\end{align*}
By the definition of a solution of RBSDE,  $\int_0^t dR^+_r
=\int_0^t\mathbf{1}_{\{Y_{r-}=L_{r-}\}} dR^{+}_r $. Hence
\begin{align*}
\int_0^t dR^{+}_r+\frac12 L^0_t(S)+J_t^p
&=-\int_0^t\mathbf{1}_{\{Y_{r-}=L_{r-}\}}f(r,Y_r)\,dr\\
&\quad+\int_0^t \mathbf{1}_{\{Y_{r-}=L_{r-}\}}
d(R^{-}_r-V^p_r+A^p_r),
\end{align*}
which leads to the desired estimate, because $dR^{+}, dR^{-}$ are
orthogonal.
\end{dow}

\nsubsection{BSDEs with two reflecting barriers: the case of
$p\in(1,2]$} \label{sec5}

In this section we show some integrability  properties of
solutions of reflected BSDEs under the assumption that the data
are in $L^p$ with $p\in(1,2]$. Except for Proposition \ref{stw5.1}
and Lemma \ref{lm5.1} we always assume that the underlying
filtration is quasi-left continuous.
\begin{stw}
\label{stw5.1} Assume that $M\in\MM_{0,loc}$, $K\in\VV_0$, $X_0$
is $\FF_0$ measurable and
\[
X_t=X_0+\int_0^tdK_r+\int_0^tdM_r,\quad t\in[0,T].
\]
Then for $p\in(1,2)$,
\begin{align*}
|X_t|^p-|X_s|^p&\ge p\int_s^t|X_{r-}|^{p-1}\hat{X}_{r-}\,dK_r+
p\int_s^t|X_{r-}|^{p-1}\hat{X}_{r-}\,dM_r\\
&\quad +\frac 1 2p(p-1)\int_s^t\mathbf{1}_{\{X_r\neq0\}}|X_r|^{p-2}\,d[X]_r^c\\
&\quad+\sum_{s<r\le t}(\Delta|X_r|^p-p|X_{r-}|^{p-1}
\hat{X}_{r-}\Delta X_r),
\end{align*}
where $\hat{x}=\frac{x}{|x|}\mathbf{1}_{\{x\neq 0\}}$,
$x\in\mathbb{R}$.
\end{stw}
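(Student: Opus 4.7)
The plan is to apply It\^o's formula to a $C^{2}$ regularization of $x\mapsto|x|^{p}$ and pass to the limit. For $\varepsilon>0$ set $\varphi_{\varepsilon}(x)=(x^{2}+\varepsilon^{2})^{p/2}$; then $\varphi_{\varepsilon}\in C^{2}(\BR)$ with
\[
\varphi'_{\varepsilon}(x)=px(x^{2}+\varepsilon^{2})^{(p-2)/2},\qquad \varphi''_{\varepsilon}(x)=p(x^{2}+\varepsilon^{2})^{(p-4)/2}\bigl((p-1)x^{2}+\varepsilon^{2}\bigr).
\]
Since $p\in(1,2)$, $\varphi_{\varepsilon}$ is convex ($\varphi''_{\varepsilon}\ge 0$), one has the uniform bound $|\varphi'_{\varepsilon}(x)|\le p|x|^{p-1}$, and as $\varepsilon\downarrow 0$: $\varphi_{\varepsilon}(x)\downarrow|x|^{p}$, $\varphi'_{\varepsilon}(x)\to p|x|^{p-1}\hat{x}$ for every $x\in\BR$, and $\varphi''_{\varepsilon}(x)\to p(p-1)|x|^{p-2}$ whenever $x\neq 0$.

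First I would localize by stopping at $\tau_{n}=\inf\{t\ge 0:|X_{t}|+|K|_{t}\ge n\}\wedge\sigma_{n}$, where $\{\sigma_{n}\}$ reduces the local martingale $M$ so that $M^{\sigma_{n}}\in\MM^{2}$. Under this stopping $X$ is bounded by $n$, $K$ has integrable variation, and $M$ is square integrable. The classical It\^o formula applied to $\varphi_{\varepsilon}(X_{r})$ on $[s,t\wedge\tau_{n}]$ yields
\begin{align*}
\varphi_{\varepsilon}(X_{t})-\varphi_{\varepsilon}(X_{s})
&=\int_{s}^{t}\varphi'_{\varepsilon}(X_{r-})\,dK_{r}+\int_{s}^{t}\varphi'_{\varepsilon}(X_{r-})\,dM_{r}\\
&\quad+\tfrac{1}{2}\int_{s}^{t}\varphi''_{\varepsilon}(X_{r-})\,d[X]^{c}_{r}+\sum_{s<r\le t}\bigl(\Delta\varphi_{\varepsilon}(X_{r})-\varphi'_{\varepsilon}(X_{r-})\Delta X_{r}\bigr).
\end{align*}

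Next I would pass to the limit $\varepsilon\downarrow 0$. The left-hand side converges pointwise to $|X_{t}|^{p}-|X_{s}|^{p}$. For the $dK$-integral, the bound $|\varphi'_{\varepsilon}(X_{r-})|\le pn^{p-1}$ and finiteness of $|K|_{t\wedge\tau_{n}}$ allow dominated convergence, giving the limit $p\int_{s}^{t}|X_{r-}|^{p-1}\hat{X}_{r-}\,dK_{r}$. The same uniform bound combined with the It\^o isometry and dominated convergence on $d\langle M\rangle\otimes dP$ yields $L^{2}$-convergence of the stochastic integral, and hence a.s.\ convergence along a subsequence, to $p\int_{s}^{t}|X_{r-}|^{p-1}\hat{X}_{r-}\,dM_{r}$.

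Finally, the remaining two terms are nonnegative for every $\varepsilon>0$: $\varphi''_{\varepsilon}\ge 0$, and each jump summand is nonnegative by convexity of $\varphi_{\varepsilon}$. Hence Fatou's lemma applies with respect to $d[X]^{c}_{r}$ and to the counting measure on the jump times, respectively. Using that $d[X]^{c}$ puts no mass on the set $\{X_{r-}\neq X_{r}\}$ (so $\mathbf{1}_{\{X_{r-}\neq 0\}}$ and $\mathbf{1}_{\{X_{r}\neq 0\}}$ agree $d[X]^{c}$-a.e.) and the pointwise convergence identified above, one obtains the lower bounds
\[
\tfrac{1}{2}p(p-1)\int_{s}^{t}\mathbf{1}_{\{X_{r}\neq 0\}}|X_{r}|^{p-2}\,d[X]^{c}_{r},\qquad \sum_{s<r\le t}\bigl(\Delta|X_{r}|^{p}-p|X_{r-}|^{p-1}\hat{X}_{r-}\Delta X_{r}\bigr).
\]
Combining these estimates proves the inequality on $[s,t\wedge\tau_{n}]$; letting $n\to\infty$ gives the general case, since all right-hand side terms pass to the limit (the two Fatou terms are nonnegative and monotone in $n$, the $dK$-integral is absolutely convergent, and the stochastic integral converges in ucp). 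The main obstacle is the blow-up of $\varphi''_{\varepsilon}$ at the origin when $p<2$; convexity saves the argument by forcing nonnegativity of the offending terms, so that Fatou's lemma supplies the one-sided bound --- which is precisely why the statement is an inequality rather than an equality.
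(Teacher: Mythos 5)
Your proof is correct and follows essentially the same route as the paper: regularize $|x|^{p}$ by $(x^{2}+\varepsilon^{2})^{p/2}$, apply the It\^o--Meyer formula, pass to the limit in the $dK$ and $dM$ integrals by dominated convergence, and use convexity plus Fatou's lemma to get the one-sided bounds on the jump and continuous quadratic variation terms. The only (harmless) differences are that you add an explicit localization step and handle the $\varphi''_{\varepsilon}$ term by a direct Fatou argument, where the paper instead splits it algebraically into two nonnegative pieces and uses monotone convergence.
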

\begin{dow}
Write $u_\varepsilon^p(x)=(|x|^2+\varepsilon^2)^{p/2}$, $x\in\BR$.
It is easily checked that
\[
\frac{du^p_\varepsilon}{dx}(x)=pu^{p-2}(x)x,\quad
\frac{d^2u^p_\varepsilon}{dx^2}(x)=pu^{p-2}(x)+p(p-2)u_\varepsilon^{p-4}
(x)x^2,\quad x\in\BR.
\]
By the It\^o-Meyer formula,
\begin{align}
\label{eq5.3} u^p_\varepsilon(X_t)-u_\varepsilon^p(X_s)&=\int_s^t
\frac{du^p_\varepsilon}{dx}(X_{r-})\,dX_r+\frac 1 2\int_s^t
\frac{d^2u^p_\varepsilon}{dx^2}(X_r)\,d[X]_r^c \nonumber\\
&\quad+\sum_{s<r\le t}(\Delta u_\varepsilon^p(X_r)-
\frac{du^p_\varepsilon}{dx}(X_{r-})\Delta X_r) \nonumber\\
&=\int_s^tpu_\varepsilon^{p-2}(X_{r-})
X_{r-}\,dK_r+\int_s^tpu_\varepsilon^{p-2}(X_{r-})X_{r-}\,dM_r\nonumber\\
&\quad+\frac12\int_s^t(pu_\varepsilon^{p-2}(X_r)
+p(p-2)u_\varepsilon^{p-4}(X_r)X_r^2)\,d[X]_r^c\nonumber\\
& \quad+\sum_{s<r\le t}(\Delta u_\varepsilon^p(X_r)
-pu_\varepsilon^{p-2}(X_{r-})X_{r-}\Delta X_r).
\end{align}
It is clear that
\begin{equation}
\label{eq5.2}
u_\varepsilon^p(X_t)-u_\varepsilon^p(X_s)\rightarrow|X_t|^p-|X_s|^p.
\end{equation}
Observe that $pu_\varepsilon^{p-2}(x)x\rightarrow
p|x|^{p-1}\hat{x}$, $x\in\BR$, and, by convexity of
$u_\varepsilon^p$, $\Delta
u_\varepsilon^p(X_r)-pu_\varepsilon^{p-2}(X_{r-})X_{r-}\Delta
X_r\ge 0$, $r\in[0,T]$. Therefore applying  Fatou's lemma we get
\begin{align}
\label{eq5.4} &\liminf_{\varepsilon\rightarrow 0^+}\sum_{s<r\le t}
(\Delta u_\varepsilon^p(X_r)
-pu_\varepsilon^{p-2}(X_{r-})X_{r-}\Delta X_r))\nonumber\\
&\qquad\ge \sum_{s<r\le t}
(\Delta|X_r|^p-p|X_{r-}|^{p-1}\hat{X}_{r-}\Delta X_r).
\end{align}
By the Lebesgue dominated convergence theorem,
\begin{equation}
\label{eq5.5}
\int_s^tpu_\varepsilon^{p-2}(X_{r-})X_r\,dK_r\rightarrow
\int_s^tp|X_{r-}|^{p-1}\hat{X}_{r-}\,dK_r
\end{equation}
and
\begin{equation}
\label{eq5.6}
\int_s^tpu_\varepsilon^{p-2}(X_{r-})X_r\,dK_r\rightarrow
\int_s^tp|X_{r-}|^{p-1}\hat{X}_{r-}\,dM_r.
\end{equation}
From the identity
\[
u_\varepsilon^q(x)|x|^2=u_\varepsilon^{q+2}(x)-\varepsilon^2
u_\varepsilon^q(x),\quad x,q\in\BR
\]
it follows that
\begin{align}
\label{eq5.7} &\int_s^t(pu_\varepsilon^{p-2}(X_{r})+
p(p-2)u_\varepsilon^{p-4}(X_r))\,d[X]_r^c \nonumber\\
&\qquad= \int_s^tp(p-1)u_\varepsilon^{p-4}(X_{r})|X_r|^2\,d[X]^c_r
+\int_s^tp\varepsilon^2u_\varepsilon^{p-4}(X_{r})\,d[X]_r^c.
\end{align}
We also have
\begin{align}
\label{eq5.8}
\nonumber\int_s^tu_{\varepsilon}^{p-4}(X_r)|X_r|^2\,d[X]_r^c
&=\int_s^t\left(\frac{|X_r|}
{u_\varepsilon(X_r)}\right)^{4-p}|X_r|^{p-2} \mathbf{1}_{\{X_r\neq
0\}}\,d[X]_r^c
\\&\nearrow\int_s^t\mathbf{1}_{\{X_r\neq 0\}}|X_r|^{p-2}\,d[X]_r^c.
\end{align}
From (\ref{eq5.3}) and (\ref{eq5.2})--(\ref{eq5.8}) we deduce the
the desired result.
\end{dow}
\medskip

Now we are going to prove some a priori estimates for solutions
of reflected BSDEs. For this we need the following lemma.

\begin{lm}
\label{lm5.1} Let $p\in(1,2]$ and let $\varphi(x)=|x|^p$,
$x\in\BR$. Then for every $x,y\in\BR$,
\[
\varphi(x)-\varphi(y)-\varphi'(y)(x-y)\ge \frac12
\mathbf{1}_{\{|x|\vee|y|\neq 0\}}\varphi''(|x|\vee|y|)(x-y)^2.
\]
\end{lm}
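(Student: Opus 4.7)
The plan is to reduce to the trivial case $x=y=0$ and then, assuming $M\equiv |x|\vee|y|>0$, combine the Taylor formula with integral remainder with the monotonicity of $\varphi''$ on $(0,\infty)$. First I would verify that for every $x,y\in\BR$ we have the identity
\[
\varphi(x)-\varphi(y)-\varphi'(y)(x-y)
=(x-y)^2\int_{0}^{1}(1-s)\varphi''(y+s(x-y))\,ds,
\]
where by convention $\varphi''(0)=+\infty$ (this value enters only on a set of $s$'s of Lebesgue measure zero). Since $p>1$, the derivative $\varphi'(t)=p|t|^{p-1}\hat{t}$ is continuous on $\BR$ and absolutely continuous on bounded intervals (its a.e.\ derivative $\varphi''(t)=p(p-1)|t|^{p-2}\mathbf{1}_{\{t\neq0\}}$ is locally integrable, as $p-2>-1$). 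Hence writing $f(s)=\varphi'(y+s(x-y))$ we have $\varphi(x)-\varphi(y)=(x-y)\int_0^1 f(s)\,ds$, and the identity follows by integration by parts $\int_0^1 f(s)\,ds=f(0)+\int_0^1(1-s)f'(s)\,ds$ together with the chain rule.

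Next, for $s\in[0,1]$,
\[
|y+s(x-y)|=|(1-s)y+sx|\le (1-s)|y|+s|x|\le M,
\]
so on the set $\{s\in[0,1]:y+s(x-y)\neq 0\}$, which is of full measure in $[0,1]$, we have $0<|y+s(x-y)|\le M$. Since $p\in(1,2]$ gives $p-2\le 0$, the map $r\mapsto p(p-1)r^{p-2}$ is nonincreasing on $(0,\infty)$, so $\varphi''(y+s(x-y))\ge \varphi''(M)$ for a.e.\ $s\in[0,1]$. Therefore
\[
\int_{0}^{1}(1-s)\varphi''(y+s(x-y))\,ds\ge \varphi''(M)\int_{0}^{1}(1-s)\,ds=\tfrac12 \varphi''(M),
\]
which combined with the above identity yields the claimed bound.

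The only delicate point is the integral Taylor formula when the segment from $y$ to $x$ passes through the origin, because then $\varphi''$ blows up there (for $p<2$). This is precisely the obstacle, but it is harmless: absolute continuity of $\varphi'$ on $\BR$ (coming from $p>1$) is enough to legitimize the integration-by-parts step, and the singular value $\varphi''(0)=+\infty$ only makes the remainder integrand larger, which is in our favor. The rest is just the monotonicity of $r\mapsto r^{p-2}$.
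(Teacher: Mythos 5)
Your proof is correct and follows essentially the same route as the paper's: your integral identity is the paper's (\ref{eq5.9}) after the substitution $u=\alpha\beta$ (both remainders equal $(x-y)^2\int_0^1(1-s)\varphi''(y+s(x-y))\,ds$), and the decisive step in both arguments is that $\varphi''$ is nonincreasing on $(0,\infty)$ for $p\le 2$, hence bounded below on the segment by $\varphi''(|x|\vee|y|)$. The only difference is in the technical justification — you invoke absolute continuity of $\varphi'$ (valid since $p-2>-1$ makes $\varphi''$ locally integrable), while the paper regularizes with $\varphi^p_\varepsilon(z)=(|z|^2+\varepsilon^2)^{p/2}$ and passes to the limit; both are legitimate.
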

\begin{dow}
By using a mollification of $\varphi$ one can easily show that for
$x\neq y$,
\begin{equation}
\label{eq5.9} \varphi(x)-\varphi(y)-\varphi'(y)(x-y)=
\Big(\int_0^1\int_0^1\alpha\varphi''(y+\alpha\beta(x-y))\,d
\alpha\,d\beta\Big)(x-y)^2.
\end{equation}
For $\varepsilon>0$ set
$\varphi^p_\varepsilon(z)=(|z|^2+\varepsilon^2)^{p/2}$. A direct
computation shows that
\begin{align*}
\frac{d^2\varphi^p_\varepsilon}{dz^2}(z)
&=p(p-1)\varphi^{p-4}_\varepsilon(z)z^2
+\varepsilon^2\varphi^{p-4}_\varepsilon(z)\\
& \ge p(p-1)\varphi^{p-4}_\varepsilon(z)z^2
=(z^2+\varepsilon^2)^{(p-2)/2}-\varepsilon^2(z^2+\varepsilon^2)^{(p-4)/2}.
\end{align*}
Let $z\neq 0$ and $x_1\le z\le x_2$. Then
\[
\frac{d^2\varphi^p_\varepsilon}{dz^2}(z)\ge p(p-1)
((|x_1|\vee|x_2|)^2+\varepsilon^2)^{(p-2)/2}
-\varepsilon(z^2+\varepsilon^2)^{(p-4)/2}.
\]
Since $\frac{d^2\varphi^p_\varepsilon}{dz^2}(z)
\rightarrow\varphi''(z)$ for $z\neq 0$, letting
$\varepsilon\rightarrow0$ in the above inequality we get
\[
\varphi''(z)\ge p(p-1)(|x_1|\vee|x_2|)^{p-2}=\varphi''(|x_1|\vee|x_2|).
\]
From this we conclude that if $x\neq 0$ or $y\neq 0$ then
\[
\int_0^1\int_0^1\alpha\varphi''(y+\alpha\beta(x-y))\,d\alpha\,d\beta\ge
\frac 1 2\varphi''(|x|\vee|y|).
\]
This when combined with (\ref{eq5.9}) gives the desired result.
\end{dow}
\medskip

Let us consider the following hypothesis.
\begin{enumerate}
\item[(A)]There exist $\lambda\ge 0$, $\mu\in\BR$ and a non-negative
progressively measurable process $f_t$ such that for every
$y\in\BR$,
\[ \hat{y}f(t,y)\le f_t+\mu|y|,\quad
dt\otimes dP\mbox{-a.s.},
\]
where $\hat{y}=\frac{y}{|y|}\mathbf{1}_{\{y\neq0\}}$.
\end{enumerate}

In the remainder of this section we assume that $\FF$ is
quasi-left continuous.
\begin{stw}
\label{stw5.2} Assume $\mbox{\rm{(A)}}$ and that $\xi\in
L^p(\FF_T)$, $f_t\in L^p(\FF)$, $V\in\VV_0^p$, $Y\in
\mathcal{S}^p$for some $p\in(1,2]$. Moreover, assume that $Y$ is a
semimartingale and denote by $M$ its martingale part in the
Doob-Meyer decomposition. Write $(Y^\alpha,Z^\alpha)=(e^{\alpha
t}Y_t,e^{\alpha t} Z_t)$, $dV^\alpha_t=e^{\alpha t}\,dV_t$ and
\[
f^\alpha(t,y,z)=e^{\alpha t}f(t,e^{-\alpha t}y,e^{-\alpha
t}z)-\alpha y.
\]
Then if
\begin{align}
\label{eq.al}
\nonumber
&|Y^\alpha_s|^p+\frac 1 2p(p-1)\int_s^t|Y^\alpha_r|^{p-2}
\mathbf{1}_{\{Y^\alpha_r\neq 0\}}\,d[Y^\alpha]_r^c+
\sum_{s<r\le t}(\Delta|Y^\alpha_r|^p-p|Y^\alpha_{r-}|^{p-1}
\hat{Y}^\alpha_{r-}\Delta Y^\alpha_r)\\&\nonumber
\quad\le |Y^\alpha_t|^p+p\int_s^t|Y^\alpha_r|^{p-1}
\hat{Y}^\alpha_rf^\alpha(r,Y^\alpha_r,Z^\alpha_r)\,dr+
p\int_s^t|Y^\alpha_{r-}|^{p-1}\hat{Y}^\alpha_{r-}\,dV^\alpha_r\\&
\quad\quad-p\int_s^t|Y^\alpha_{r-}|^{p-1}
\hat{Y}^\alpha_{r-}\,dM_r,
\quad 0\le s\le t\le T
\end{align}
for some $\alpha\ge \mu$ then
there is $C>0$ depending only on $p$ such that
\begin{align*}
&E\sup_{t\le T}|Y^\alpha_t|^p
+E\Big(\int_0^Td[
M]_r\Big)^{p/2}\\
&\qquad\le CE\Big(|\xi^\alpha|^p+\big(\int_0^Td|V^\alpha|_r\big)^p
+\big(\int_0^Tf^\alpha_r\,dr\big)^p\Big),
\end{align*}
where $\xi^\alpha=e^{\alpha T}\xi,\, f^{\alpha}_r=e^{\alpha
t}f_r$.
\end{stw}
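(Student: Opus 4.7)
The plan is to derive two coupled $L^p$ estimates from (\ref{eq.al}) --- one controlling $E[M]_T^{p/2}$ through a weighted quadratic-variation lower bound, and one controlling $E\sup_t|Y^\alpha_t|^p$ via Burkholder--Davis--Gundy --- and then close them simultaneously by Young's inequality.

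First, hypothesis (A) together with $\alpha\ge\mu$ gives
$\hat{y}^\alpha f^\alpha(r,y^\alpha,z^\alpha)=e^{\alpha r}\hat{y}f(r,y,z)-\alpha|y^\alpha|\le f^\alpha_r+(\mu-\alpha)|y^\alpha|\le f^\alpha_r$,
so the driver term in (\ref{eq.al}) is majorised by $p|Y^\alpha_r|^{p-1}f^\alpha_r$. Taking $s=0$, $t=T$ in (\ref{eq.al}), localising the stochastic integral $\int|Y^\alpha|^{p-1}\hat{Y}^\alpha\,dM$ by a fundamental sequence $\{\tau_n\}$, and passing to the limit using $Y\in\mathcal{S}^p$, one obtains
\[
E\Phi_T\le E|\xi^\alpha|^p+pE\int_0^T|Y^\alpha_r|^{p-1}f^\alpha_r\,dr+pE\int_0^T|Y^\alpha_{r-}|^{p-1}\,d|V^\alpha|_r,
\]
where $\Phi_T$ collects the two nonnegative terms on the left-hand side of (\ref{eq.al}). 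Bounding $|Y^\alpha|^{p-1}$ by $(\sup_{t\le T}|Y^\alpha_t|)^{p-1}$, then applying H\"older and Young (exponents $p/(p-1)$, $p$), converts the right-hand side into $\delta E\sup_{t\le T}|Y^\alpha_t|^p+C_\delta A$ for any $\delta>0$, where $A:=E|\xi^\alpha|^p+E(\int_0^T f^\alpha_r\,dr)^p+E|V^\alpha|_T^p$.

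Quasi-left continuity now enters: under q.l.c.\ the jumps of $M$ occur only at totally inaccessible times while those of the predictable part of $Y^\alpha$ occur only at predictable times, so the two sets of jump times are disjoint and $[Y^\alpha]^c=[M]^c$, $(\Delta Y^\alpha_r)^2\ge(\Delta M_r)^2$. Combined with $(|Y^\alpha|\vee|Y^\alpha_-|)^{p-2}\ge(\sup_{t\le T}|Y^\alpha_t|)^{p-2}$ (valid since $p-2\le 0$), Lemma \ref{lm5.1} yields the key lower bound
$\Phi_T\ge\tfrac{p(p-1)}{2}(\sup_{t\le T}|Y^\alpha_t|)^{p-2}[M]_T$.
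Applying H\"older with exponents $2/p$ and $2/(2-p)$ to the identity $[M]_T^{p/2}=((\sup|Y^\alpha|)^{p-2}[M]_T)^{p/2}\cdot(\sup|Y^\alpha|)^{(2-p)p/2}$ gives
\[
E[M]_T^{p/2}\le C\bigl(E\{(\sup_{t\le T}|Y^\alpha_t|)^{p-2}[M]_T\}\bigr)^{p/2}\bigl(E\sup_{t\le T}|Y^\alpha_t|^p\bigr)^{(2-p)/2},
\]
which together with the estimate on $E\Phi_T$ and weighted AM--GM produces $E[M]_T^{p/2}\le\epsilon E\sup_{t\le T}|Y^\alpha_t|^p+C_\epsilon A$ for arbitrary $\epsilon>0$.

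Finally, rewriting (\ref{eq.al}) as
$|Y^\alpha_s|^p\le|\xi^\alpha|^p+p\int_s^T|Y^\alpha_r|^{p-1}f^\alpha_r\,dr+p\int_s^T|Y^\alpha_{r-}|^{p-1}\,d|V^\alpha|_r-p\int_s^T|Y^\alpha_{r-}|^{p-1}\hat{Y}^\alpha_{r-}\,dM_r$,
taking $\sup_s$ and expectation, and applying Burkholder--Davis--Gundy to the martingale integral yields a bound involving $c_p E(\sup_{t\le T}|Y^\alpha_t|)^{p-1}[M]_T^{1/2}$. Three further applications of Young's inequality (with exponents $p/(p-1)$, $p$) to each $(\sup|Y^\alpha|)^{p-1}(\cdot)$ factor give $E\sup_{t\le T}|Y^\alpha_t|^p\le\epsilon' E\sup_{t\le T}|Y^\alpha_t|^p+C_{\epsilon'}(A+E[M]_T^{p/2})$. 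Substituting the preceding bound on $E[M]_T^{p/2}$ and choosing $\delta,\epsilon,\epsilon'$ small enough allows absorption of every $\sup$-term into the left-hand side, yielding $E\sup_{t\le T}|Y^\alpha_t|^p+E[M]_T^{p/2}\le CA$. The main obstacle is the bookkeeping required to close this system of interlocking inequalities, in which both unknowns appear on each side; the scheme works precisely because the Young exponents $(p/(p-1),p)$ and $(2/p,2/(2-p))$ fit together exactly when $p\in(1,2]$.
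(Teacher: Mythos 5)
Your argument is correct and follows essentially the same route as the paper's proof: assumption (A) with $\alpha\ge\mu$ kills the driver, Lemma \ref{lm5.1} turns the jump sum into a weighted discontinuous bracket, quasi-left continuity gives $d[Y^\alpha]\ge d[M]$, the H\"older split with exponents $2/p$ and $2/(2-p)$ extracts $E[M]_T^{p/2}$, and Burkholder--Davis--Gundy plus Young closes the supremum estimate. The only real difference is bookkeeping: the paper first closes $E\sup_{t\le T}|Y^\alpha_t|^p$ against the data and then deduces the bracket bound, whereas you close the two interlocking inequalities simultaneously; both orderings work (and both share the same minor implicit point, namely that the indicator $\mathbf{1}_{\{|Y^\alpha_r|\vee|Y^\alpha_{r-}|\neq 0\}}$ must be harmless when passing from the weighted bracket to $[M]_T$).
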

\begin{dow}
For simplicity we assume that $\alpha=0$. We only consider the
case $p\in(1,2)$. The case $p=2$ is included in the assertion  of
Proposition \ref{stw7.2}. By assumption (A), for every
$\tau\in\mathcal{T}$ we have
\begin{align}
\label{eq5.10} \nonumber &|Y_t|^p+\frac12
p(p-1)\int_t^\tau|Y_r|^{p-2}\mathbf{1}_{\{Y_r\neq 0\}}\,d[Y]^c_r
+\sum_{t<r\le \tau}(\Delta|Y_r|^p-p|Y_{r-}|^{p-1}
\hat{Y}_{r-}\Delta Y_r)\\
&\qquad\le|Y_\tau|^p+p\int_t^\tau(|Y_r|^{p-1}f_r+\mu|Y_r|^p)\,dr
+p\int_t^\tau|Y_r|^{p-1}\,d|V|_r\nonumber\\
&\qquad\quad-p\int_t^\tau|Y_r|^{p-1}\hat{Y}_r\,dM_r.
\end{align}
By Lemma \ref{lm5.1},
\begin{align*}
&\sum_{t<r\le\tau}(\Delta|Y_r|^p-p|Y_{r-}|^{p-1}\hat{Y}_{r-}\Delta Y_r)\\
&\qquad\ge \frac12p(p-1)\sum_{t<r\le\tau}
\mathbf{1}_{\{|Y_{r}|\vee|Y_{r-}|\neq 0\}}
(|Y_r|\vee|Y_{r-}|)^{p-2}|\Delta Y_r|^2 \nonumber\\
&\qquad=\frac12p(p-1)\int_t^\tau\mathbf{1}_{\{|Y_{r}|\vee|Y_{r-}|\neq0\}}
(|Y_r|\vee|Y_{r-}|)^{p-2}|\,d[Y]_r^d.
\end{align*}
The above inequality when combined with (\ref{eq5.10})  and the
fact that $\mu\le\alpha\le0$  gives
\begin{align}
\label{eq5.12} \nonumber &|Y_t|^p+\frac12p(p-1)\int_t^\tau
\mathbf{1}_{\{|Y_{r}|\vee|Y_{r-}|\neq 0\}}
(|Y_r|\vee|Y_{r-}|)^{p-2}\, d[Y]_r \\
&\qquad\le|Y_\tau|^p+p\int_t^\tau |Y_r|^{p-1}f_r\,dr+
\int_t^\tau|Y_r|^{p-1}\,d|V|_r
-p\int_t^\tau|Y_r|^{p-1}\hat{Y}_r\,dM_r.
\end{align}
Since the filtration is quasi-left continuous, $d[Y]\ge d[M]$.
Thus
\begin{align}
\label{eq5.19} & E\int_t^\tau\mathbf{1}_{\{|Y_{r}|\vee|Y_{r-}|\neq
0\}} (|Y_r|\vee|Y_{r-}|)^{p-1}\,d[Y]_r \nonumber\\
&\qquad\ge E\int_t^\tau\mathbf{1}_{\{|Y_{r}|\vee|Y_{r-}|\neq 0\}}
(|Y_r|\vee|Y_{r-}|)^{p-1}\, d[M]_r.
\end{align}
For $k\in\BN$ write
$\tau_k=\sigma_k\wedge\delta_k$, where $\{\sigma_k\}$ is a
fundamental sequence for the local martingale
$\int_0^\cdot|Y_r|^{p-1}\hat{Y}_{r-}\,dM_r$
and
\[
\delta_k=\inf\{t\ge 0,\int_0^t(|Y_r| \vee
|Y_{r-}|)^{p-2}\mathbf{1}_{\{|Y_r|\vee|Y_{r-}|\neq 0\}}
\,d[M]_r\ge k\}.
\]
Substituting (\ref{eq5.19}) into (\ref{eq5.12}) and then replacing
$\tau$ by $\tau_k$ in (\ref{eq5.12}), integrating and  letting
$k\rightarrow\infty$  we get
\begin{equation}
\label{eq5.14}
E|Y_t|^p+\frac 1 4p(p-1)E\int_0^T\mathbf{1}_{\{|Y_r|\vee|Y_{r-}|\neq 0\}}
(|Y_r|\vee|Y_{r-}|)^{p-2}\,d[M]_r\le EX,
\end{equation}
where
$X=|\xi|^p+p\int_0^T|Y_r|^{p-1}f_r\,dr+p\int_0^T|Y_r|^{p-1}\,d|V|_r$.
Furthermore, by (\ref{eq5.12}),
\begin{align*}
E\sup_{t\le T}|Y_t|^p&\le EX+pE\sup_{t\le T}
\Big|\int_t^T
|Y_{r-}|^{p-1}\hat{Y}_{r-}\,dM_r\Big|\\
&\le EX+c_1pE\sup_{t\le T}|Y_t|^{p/2}
\Big(\int_t^T
(|Y_r|\vee|Y_{r-}|)^{p-2}
\mathbf{1}_{\{|Y_r|\vee|Y_{r-}|\neq0\}}\,d[M]_r\Big)^{1/2}\\
& \le EX+\beta E\sup_{t\le T}|Y_t|^p+\beta^{-1} c_1p
E\int_t^T(|Y_r|\vee|Y_{r-}|)^{p-2}
\mathbf{1}_{\{|Y_r|\vee|Y_{r-}|\neq0\}} \,d[M]_r.
\end{align*}
Taking $\beta>0$ sufficiently small we get
\begin{equation}
\label{eq5.15}
E\sup_{t\le T}|Y_t|^p\le c_2EX.
\end{equation}
Therefore
\begin{align}
\label{eq5.26} &E\Big(\int_0^T\,d[M]_r\Big)^{p/2}
=E\Big(\int_0^T(|Y_r|\vee|Y_{r-}|+\varepsilon)^{2-p}
(|Y_r|\vee|Y_{r-}|+\varepsilon)^{p-2}
\,d[M]_r\Big)^{p/2}\nonumber\\
&\quad \le E(\sup_{t\le T}|Y_t|^{2-p}+\varepsilon^{2-p})^{p/2}
\int_0^T(|Y_r|\vee|Y_{r-}|+\varepsilon)^{p-2}
\,d[M]_r\Big)^{p/2}\nonumber\\
&\quad\le\Big(E(\sup_{t\le T}
|Y_t|^{(2-p)}+\varepsilon^{2-p})^{(p/2)(2/p)^*}\Big)^{1/(2/p)^*}\nonumber\\
&\qquad\qquad\qquad\qquad\qquad\qquad
\times\Big(E\int_0^T(|Y_r|\vee|Y_{r-}|+\varepsilon)^{p-2}
\,d[M]_r\Big)^{p/2},
\end{align}
where $(2/p)^*$ is the H\"older conjugate to $2/p$. By
(\ref{eq5.14}) we may pass in (\ref{eq5.26}) to the limit as
$\varepsilon\rightarrow0$. We then get
\begin{align*}
E\Big(\int_0^T\,d[ M]_r\Big)^{p/2}\le(E\sup_{t\le
T}|Y_t|^p)^{(2-p)/2} \Big(E\int_0^T(|Y_r|\vee|Y_{r-}|)^{p-2}
\mathbf{1}_{\{|Y_r|\vee|Y_{r-}|\neq0\}} \,d[M]_r\Big).
\end{align*}
By Young's inequality,
\[
E\Big(\int_0^T\,d[ M]_r\Big)^{p/2}
\le\frac{2-p}{2}E\sup_{t\le T}|Y_t|^p+\frac{p}{2}
E\int_0^T(|Y_r|\vee|Y_{r-}|)^{p-2}
\mathbf{1}_{\{|Y_r|\vee|Y_{r-}|\neq0\}}\,d[M]_r.
\]
This and (\ref{eq5.14}) imply that $Z\in M^p$ and
\begin{equation}
\label{eq5.17} E\sup_{t\le T}|Y_t|^p
+E\Big(\sum_{i=1}^\infty\int_0^T|Z^i_r|^2\,d \langle
M^i\rangle_r\Big)^{p/2}\le c_3 EX.
\end{equation}
Finally, observe that
\[
EX\le\beta E\sup_{t\le T}|Y_t|^p+\beta^{-1}c_4
E\Big(|\xi|^2+\big(\int_0^Tf_r\,dr\big)^p+\big(\int_0^Td|V|_r\big)^p\Big).
\]
From this and (\ref{eq5.17}) the desired estimate follows.
\end{dow}
\medskip

For $p\in(1,2]$ we will use the following modifications to  (H4),
(H6):
\begin{enumerate}
\item[(H4${}^*$)]$\xi\in L^p(\FF_T)$, $V\in\mathcal{V}_0^p$, $f(\cdot, 0)\in
L^p(\FF)$.
\item[(H6${}^*$)] There exists $X\in \mathcal{V}^p\oplus\MM^p$
such that
\[
L_t\le X_t\le U_t \quad\mbox{\rm{for a.e. }}t\in[0,T], \quad
\int_0^{T}|f(t,X_t)|\,dt\in L^p(\FF_T).
\]
\end{enumerate}

\begin{stw}
\label{stw3.p} Assume that \mbox{\rm{(H1)--(H3)}} and
\mbox{\rm(H4${}^*$)} with $p\in(1,2]$ are satisfied. Then there
exists a solution $(Y,M)\in \mathcal{S}^p\otimes\mathcal{M}^p_0$
of BSDE$(\xi,f+dV)$. Moreover, $\int_0^T|f(t,Y_t)|\,dt\in
L^p(\FF_T)$.
\end{stw}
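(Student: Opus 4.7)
The plan is to construct the solution by approximating the $L^p$ data with bounded data, solving each approximate BSDE via the $L^1$-existence theory, and passing to the limit using the a priori $L^p$ estimate of Proposition \ref{stw5.2}.

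First, I truncate the data. For $n\in\BN$, set $\xi^n=(\xi\wedge n)\vee(-n)$, $\sigma_n=\inf\{t\ge 0:|V|_t\ge n\}\wedge T$, $V^n_t:=V_{t\wedge\sigma_n}$, $q_n(x):=(x\wedge n)\vee(-n)$, and $f^n(t,y):=f(t,y)-f(t,0)+q_n(f(t,0))$. Each triple $(\xi^n,f^n+dV^n)$ satisfies \mbox{(H1)--(H4)} with bounded $\xi^n,\,f^n(\cdot,0),\,|V^n|_T$, so by the $L^1$-existence theorem \cite[Theorem 2.7]{KR:JFA} there is a unique solution $(Y^n,M^n)$ with $Y^n$ of class (D); since the data are bounded, $Y^n\in\mathcal{S}^q$ for every $q\ge 1$, making Proposition \ref{stw5.2} applicable. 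Monotonicity (H1) yields $\hat y f^n(t,y)\le|f(t,0)|+\mu|y|$, so hypothesis (A) holds with $f_t=|f(t,0)|$ and the same $\mu$ uniformly in $n$, and the proposition gives
\[
\sup_n\Bigl(E\sup_{t\le T}|Y^n_t|^p+E[M^n]_T^{p/2}\Bigr)\le CE\Bigl(|\xi|^p+\Bigl(\int_0^T|f(r,0)|\,dr\Bigr)^p+|V|_T^p\Bigr)<\infty,
\]
finite by (H4${}^*$) and H\"older's inequality.

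Next, I apply Proposition \ref{stw5.2} to the BSDE solved by the difference $(Y^n-Y^m,M^n-M^m)$: its driver $[f(r,Y^n_r)-f(r,Y^m_r)]+[q_n(f(r,0))-q_m(f(r,0))]$ obeys (A) with forcing $|q_n(f(\cdot,0))-q_m(f(\cdot,0))|$ thanks to (H1), so
\[
E\sup_{t\le T}|Y^n_t-Y^m_t|^p+E[M^n-M^m]_T^{p/2}\le CE\Bigl(|\xi^n-\xi^m|^p+\Bigl(\int_0^T|q_n(f(r,0))-q_m(f(r,0))|\,dr\Bigr)^p+|V^n-V^m|_T^p\Bigr),
\]
whose right-hand side vanishes as $n,m\to\infty$ by dominated convergence (all three forcings are dominated by quantities in $L^p$ and converge to $0$ pointwise). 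Hence $\{(Y^n,M^n)\}$ is Cauchy in $\mathcal{S}^p\otimes\mathcal{M}^p_0$ and converges to some $(Y,M)$. Along a subsequence $Y^n\to Y$ uniformly a.s., so by continuity (H3), $f^n(r,Y^n_r)\to f(r,Y_r)$ $dr\otimes dP$-a.e.; the identity $\int_t^T f^n(r,Y^n_r)\,dr=Y^n_t-\xi^n-\int_t^T dV^n_r+\int_t^T dM^n_r$ converges in $L^p$, its a.s.\ limit is $\int_t^T f(r,Y_r)\,dr$, and $(Y,M)$ solves BSDE$(\xi,f+dV)$ in $\mathcal{S}^p\otimes\mathcal{M}^p_0$.

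The main obstacle is the final claim $\int_0^T|f(r,Y_r)|\,dr\in L^p(\FF_T)$, since the BSDE delivers only the signed integral $\int_0^T f(r,Y_r)\,dr=Y_0-\xi-V_T+M_T$ in $L^p$. To promote to the absolute value, I exploit the two one-sided consequences of (H1), namely $(f(t,y)-f(t,0))^+\le\mu^+y^+$ and $(f(t,y)-f(t,0))^-\le\mu^+y^-$, which bound two of the four sign-components of $|f(\cdot,Y^n)-f(\cdot,0)|$ by $\mu^+|Y^n|$, uniformly in $L^p$ by the first step. The two remaining ``wrong-sign'' components are recovered by applying Tanaka--Meyer to $(Y^n)^+$ and $(Y^n)^-$; after taking expectations, the nonnegative local-time and jump contributions provide the crucial one-sided inequality bounding these components in terms of $Y^n_0,\xi^n,V^n,M^n$, the already controlled ``good'' components and a true martingale. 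Combining yields $\sup_n E(\int_0^T|f^n(r,Y^n_r)|\,dr)^p<\infty$, and Fatou's lemma, together with the a.s.\ convergence $f^n(r,Y^n_r)\to f(r,Y_r)$, delivers the desired integrability.
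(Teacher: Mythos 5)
Your proposal is correct and its skeleton coincides with the paper's: truncate $(\xi,f(\cdot,0),V)$, solve the truncated equations by the $L^1/L^2$ theory of \cite{KR:JFA}, and use the a priori estimate of Proposition \ref{stw5.2} (whose hypothesis (\ref{eq.al}) holds for a nonreflected solution by Proposition \ref{stw5.1}, and for differences of solutions as well) to control the approximations uniformly in $\mathcal{S}^p\otimes\MM^p$. Where you diverge is in how the remaining claims are closed, and your route is the more self-contained one. The paper takes the limit solution $(Y,M)$ of class (D) and the convergence $Y^n\rightarrow Y$ in $\mathcal{S}^q$, $q\in(0,1)$, directly from \cite[Theorem 2.7]{KR:JFA}, and then upgrades integrability by citing \cite[Lemma 2.5]{KR:JFA} for $M\in\MM^p$ and \cite[Lemma 2.3]{KR:JFA} for $E(\int_0^T|f(r,Y_r)|\,dr)^p<\infty$; you instead prove a Cauchy estimate in $\mathcal{S}^p\otimes\MM^p$ by applying Proposition \ref{stw5.2} to differences (which simultaneously yields $Y\in\mathcal{S}^p$ and $M\in\MM^p$), and you reprove the generator bound via Tanaka--Meyer applied to $(Y^n)^{\pm}$ --- which is in substance the content of \cite[Lemma 2.3]{KR:JFA}, so nothing is lost and the proof no longer leans on those two external lemmas. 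Two small points to tighten: the inequalities $(f(t,y)-f(t,0))^{+}\le\mu^{+}y^{+}$ and $(f(t,y)-f(t,0))^{-}\le\mu^{+}y^{-}$ hold only on $\{y>0\}$ and $\{y<0\}$ respectively (as your ``four sign-components'' phrasing implicitly acknowledges, but the displayed form suggests a global bound); and the identification $\int_t^T f^n(r,Y^n_r)\,dr\rightarrow\int_t^T f(r,Y_r)\,dr$ needs a domination argument --- e.g. by (H1)--(H3), $\sup_{|y|\le c}|f(t,y)|\le|f(t,c)|+|f(t,-c)|+2|\mu|c\in L^1(0,T)$ a.s.\ with $c=\sup_{n,t}|Y^n_t|<\infty$ a.s.\ from the Cauchy property --- rather than bare pointwise convergence. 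Both are routine repairs.
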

\begin{dow}
By \cite[Theorem 2.7]{KR:JFA} there exists a solution $(Y,M)$ of
BSDE$(\xi,f+dV)$ such that $Y$ is of class (D) and $Y\in \mathcal{S}^q$ for
$q\in(0,1)$. Moreover, $Y^n\rightarrow Y$ in $\mathcal{S}^q$, $q\in(0,1)$,
where $(Y^n,M^n)\in \mathcal{S}^2\otimes\MM^2$ is a solution of
BSDE$(\xi^n,f^n+dV^n)$ and
\[\xi^n=T_n(\xi),\quad
f_n(t,y)=f(t,y)-f(t,0)+T_n(f(t,0)),\quad
V_t^n=\int_0^t\mathbf{1}_{\{|V|_s\le n\}}\,dV_s.
\]
By Proposition \ref{stw5.2},
\[
E\sup_{t\le T}|Y_t^n|^p\le CE\Big(|\xi^n|^p
+\big(\int_0^T|f_n(r,0)|\,dr\big)^p+\big(\int_0^Td|V^n|_r\big)^p\Big).
\]
Letting $n\rightarrow\infty$ shows that $Y\in \mathcal{S}^p$ and
\[
E\sup_{t\le T}|Y_t|^p\le CE\Big(|\xi|^p +
\big(\int_0^T|f(r,0)|\,dr\big)^p+\big(\int_0^Td|V|_r\big)^p\Big).
\]
By \cite[Lemma 2.5]{KR:JFA}, $M\in\MM^p$. Hence, by \cite[Lemma
2.3]{KR:JFA},
\[
E\big(\int_0^T|f(r,Y_r)|\,dr\big)^p\le CE
\Big(|\xi|^p+\big(\int_0^T|f|(r,0)\,dr\big)^p
+\big(\int_0^T\,d|V|_r\big)^p\Big),
\]
and the proof is complete.
\end{dow}

\begin{lm}
\label{lm5.2} Assume \mbox{\rm(H1)--(H3), (H4${}^*$), (H6${}^*$)}
with $p\in(1,2]$ are satisfied. Then there exists a solution
$(Y,Z,R)$ of $\mbox{\rm{RBSDE}}(\xi,f+dV,L,U)$ such that $Y\in
\mathcal{S}^p$ and $\int_0^T|f(t,Y_t)|\,dt\in L^p(\FF_T)$.
\end{lm}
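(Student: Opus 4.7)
Existence of a solution $(Y,M,R)$ with $R\in\mathcal{V}_0^1$ is immediate from Theorem~\ref{tw2.1}, since (H4${}^*$) and (H6${}^*$) imply (H4) and (H6). The real task is to prove $Y\in\mathcal{S}^p$ and $\int_0^T|f(t,Y_t)|\,dt\in L^p(\FF_T)$; the plan is to reduce to Proposition~\ref{stw5.2} via the standard shift $\bar Y:=Y-X$, where $X=X_0-C+N$ with $C\in\mathcal{V}_0^p$, $N\in\MM^p$ is the semimartingale supplied by (H6${}^*$).

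Setting $\bar\xi=\xi-X_T$, $\bar V=V-C$, $\bar f(r,y)=f(r,y+X_r)$ and $\bar M=M-N$, the process $\bar Y$ is c\`adl\`ag and satisfies
\[
\bar Y_t=\bar\xi+\int_t^T \bar f(r,\bar Y_r)\,dr+\int_t^T d\bar V_r+\int_t^T dR_r-\int_t^T d\bar M_r.
\]
Here $\bar\xi\in L^p(\FF_T)$ and $\bar V\in\mathcal{V}_0^p$; monotonicity (H1) transfers to $\bar f$ with the same $\mu$, and condition (A) of Proposition~\ref{stw5.2} holds with $\bar f_r=|f(r,X_r)|$ thanks to (H6${}^*$). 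The shifted barriers $\bar L=L-X$ and $\bar U=U-X$ now straddle zero, which is what tames the reflection term.

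The core computation is to verify inequality (\ref{eq.al}) for $\bar Y$. Proposition~\ref{stw5.1} applied to $\bar Y$ yields exactly (\ref{eq.al}) up to an additional right-hand side contribution $p\int_s^t|\bar Y_{r-}|^{p-1}\hat{\bar Y}_{r-}\,dR_r$, and I would show that this term is nonpositive. The idea is to feed the c\`adl\`ag test barriers $\hat L:=Y\wedge X$ and $\hat U:=Y\vee X$ (admissible because $L\le X$ and $L\le Y$ a.e., so $L\le\hat L\le Y\le\hat U\le U$ a.e.) into the minimality condition of $R$, producing
\[
\int_0^T(Y_{t-}-X_{t-})^+\,dR_t^+=\int_0^T(X_{t-}-Y_{t-})^+\,dR_t^-=0.
\]
Consequently $\bar Y_{t-}\le 0$ $dR^+$-a.e.\ and $\bar Y_{t-}\ge 0$ $dR^-$-a.e., so the integrand of the offending term is nonpositive on each support of the Jordan decomposition; dropping it leaves precisely (\ref{eq.al}) for $\bar Y$.

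Since Proposition~\ref{stw5.2} still demands $\bar Y\in\mathcal{S}^p$ as a standing hypothesis, I would close the loop by approximation, exactly as in the proof of Proposition~\ref{stw3.p}: truncate $\xi^n=T_n(\xi)$, $f_n(r,y)=f(r,y)-f(r,0)+T_n(f(r,0))$, $V_t^n=\int_0^t\mathbf{1}_{\{|V|_s\le n\}}\,dV_s$, and keep the same $X$, so that (H6) remains valid. Theorem~\ref{tw2.1} then supplies solutions $(Y^n,M^n,R^n)$; the bounded data together with standard $L^2$ estimates place $Y^n\in\mathcal{S}^2$, which legitimizes Proposition~\ref{stw5.2} applied to $\bar Y^n=Y^n-X$ and delivers a bound on $E\sup_{t\le T}|\bar Y_t^n|^p+E[\bar M^n]_T^{p/2}$ that is uniform in $n$, with right-hand side controlled by $E|\xi|^p$, $E(\int_0^T d|V|_r)^p$, $E(\int_0^T|f(r,X_r)|\,dr)^p$ and the $\mathcal{V}^p\oplus\MM^p$-norm of $X$. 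Stability/comparison (Proposition~\ref{stw2.1}) identifies the limit of $Y^n$ with $Y$, Fatou transfers the bound to give $Y\in\mathcal{S}^p$ and $M\in\MM^p$, and then $\int_0^T|f(t,Y_t)|\,dt\in L^p(\FF_T)$ follows from \cite[Lemma 2.3]{KR:JFA} applied to the BSDE satisfied by $Y$. The main obstacle is the sign computation for the $dR$ term, which is handled by the test-barrier choice above; the rest is bootstrapping familiar from the non-reflected case.
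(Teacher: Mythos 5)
Your route is genuinely different from the paper's. You aim at a direct a priori estimate: apply Proposition \ref{stw5.1} to $\bar Y=Y-X$ and kill the reflection term by testing the minimality condition with $\hat L=Y\wedge X$, $\hat U=Y\vee X$, which gives $\int_0^T(Y_{t-}-X_{t-})^+\,dR_t^+=\int_0^T(X_{t-}-Y_{t-})^+\,dR_t^-=0$ and hence $\bar Y_{t-}\le 0$ $dR^+$-a.e.\ and $\bar Y_{t-}\ge 0$ $dR^-$-a.e. That sign computation is correct, the test barriers are admissible, and this is the standard device in the $L^p$ literature on reflected equations. The paper's proof of this lemma is instead a two-level comparison: by (\ref{eq3.bb}) the two-barrier solution is sandwiched between the solutions of the two one-barrier problems, each of which is in turn sandwiched by (\ref{eq1.13}) between solutions of \emph{non-reflected} BSDEs whose data are in $L^p$ by (H4${}^*$), (H6${}^*$); Proposition \ref{stw3.p} then gives the $\mathcal{S}^p$ bound and the $L^p$ bound on $\int_0^T|f(t,\cdot)|\,dt$ for the dominating processes, and (H1) transfers the bound on $f(t,Y_t)$.

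The genuine gap is in your bootstrapping step. Proposition \ref{stw5.2} requires $Y\in\mathcal{S}^p$ as a standing hypothesis (it is what makes $EX<\infty$ in its proof, so without it the inequality $E|Y_t|^p\le EX$ is vacuous), and you propose to secure it by truncating $\xi$, $f(\cdot,0)$ and $V$ and asserting that the resulting reflected solutions $Y^n$ lie in $\mathcal{S}^2$. This is false in general: the barriers $L,U$ and the separating semimartingale $X$ are not truncated, and the data attached to them --- $X_T$, the finite variation part of $X$, $\int_0^T|f(t,X_t)|\,dt$ --- carry only $L^p$ integrability. Already for $\xi=0$, $f=0$, $V=0$ one has $Y^n\ge L$, so if $\sup_tL_t^+\in L^p\setminus L^2$ (compatible with (H6${}^*$) for $p<2$) then $Y^n\notin\mathcal{S}^2$ for every $n$. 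Any attempt to first establish $Y^n\in\mathcal{S}^p$ (the exponent actually needed for Proposition \ref{stw5.2}) leads back to the comparison argument, at which point the It\^o--Tanaka computation is redundant for the claims of this lemma. Note the order of operations in the paper: $Y\in\mathcal{S}^p$ and $\int_0^T|f(t,Y_t)|\,dt\in L^p(\FF_T)$ are obtained first, by comparison, and only afterwards (Theorem \ref{tw3.p}) is an It\^o-type estimate run for $M$ and $R$, with a localization that is legitimate precisely because those integrability properties are already in hand. To salvage your route you would have to localize over stochastic intervals $[0,\delta_k]$ as in Theorem \ref{tw4.1} and control the jump of $Y$ at $\delta_k$, which is substantially more delicate than the comparison the paper uses.
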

\begin{dow}
By Theorem \ref{tw2.1}  there exists a solution $(Y,Z,R)$ of
RBSDE$(\xi,f+dV,L,U)$. By Theorem \ref{tw2.1}, (\ref{eq3.bb}) and
(A1) it is enough to prove the integrability properties of $Y$
stated in the lemma in  case $(Y,Z,R)$ is a solution of RBSDE with
one reflecting barrier. So, let us assume that $(Y,Z,R)$ is the
solution of RBSDE$(\xi,f+dV,L)$. Then the desired properties of
$Y$ follow from Theorem \ref{tw1.1}, (\ref{eq1.13}) and
Proposition \ref{stw3.p}.
\end{dow}

\begin{tw}
\label{tw3.p} Assume that \mbox{\rm(H1)--(H3), (H4${}^*$)} with
$p\in(1,2]$ are satisfied. Then there exists a solution
$(Y,M,R)\in \mathcal{S}^p\otimes\mathcal{M}^p\otimes
\mathcal{V}^p_0$ of RBSDE$(\xi,f+dV,L,U)$ iff \mbox{\rm(H6${}^*$)}
is satisfied.
\end{tw}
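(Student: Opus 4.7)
The plan is to dispatch the easy implication directly and build on Lemma \ref{lm5.2} for the harder one, using the Mokobodski semimartingale $X$ from (H6${}^*$) as a comparison process to upgrade the integrability of $M$ and $R$ to $\mathcal{M}^p$ and $\mathcal{V}^p_0$.

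\emph{Necessity.} If $(Y,M,R)\in \mathcal{S}^p\otimes\mathcal{M}^p\otimes\mathcal{V}^p_0$ is a solution, take $X:=Y$. Condition (b) gives $L\le X\le U$ a.e., the BSDE (d) provides the decomposition $Y=Y_0-\int_0^\cdot f(r,Y_r)\,dr-V-R+M$ with the finite variation and local martingale parts in $\mathcal{V}^p$ and $\mathcal{M}^p$ respectively, and (H1) together with the $L^p$-integrability of $\xi,V,R,M$ forces $\int_0^T|f(r,Y_r)|\,dr\in L^p(\FF_T)$ via the one-sided argument of \cite[Lemma 2.3]{KR:JFA}. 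Hence $X\in\mathcal{V}^p\oplus\mathcal{M}^p$ and (H6${}^*$) holds.

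\emph{Sufficiency.} Assume (H6${}^*$). Lemma \ref{lm5.2} supplies a solution $(Y,M,R)$ with $Y\in\mathcal{S}^p$ and $\int_0^T|f(r,Y_r)|\,dr\in L^p(\FF_T)$. Decompose $X=X_0-K^X+M^X$ with $K^X\in\mathcal{V}^p$ and $M^X\in\mathcal{M}^p$. The c\`adl\`ag test processes $\hat L:=Y\wedge X$ and $\hat U:=Y\vee X$ are admissible in the minimality condition (c) since $L\le\hat L\le Y\le\hat U\le U$ a.e., and they yield
\[
\int_0^T(Y_{t-}-X_{t-})^+\,dR_t^+=0,\qquad \int_0^T(X_{t-}-Y_{t-})^+\,dR_t^-=0,
\]
so $\widehat{Y-X}_{-}\le 0$ on $\mathrm{supp}(dR^+)$ and $\widehat{Y-X}_{-}\ge 0$ on $\mathrm{supp}(dR^-)$. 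Applying Proposition \ref{stw5.1} to the semimartingale $Y-X$, the first-order $dR$-contribution $p\int|Y_--X_-|^{p-1}\widehat{Y-X}_-(dR^+-dR^-)$ is nonpositive by this sign analysis, hence can be moved to the left-hand side together with $|Y_t-X_t|^p$. Running the chain of estimates of Proposition \ref{stw5.2} (BDG applied to the local martingale $\int|Y-X|^{p-1}\widehat{Y-X}_-\,d(M-M^X)_r$ followed by Young's inequality) produces
\[
\begin{aligned}
&E\sup_{t\le T}|Y_t-X_t|^p+E[M-M^X]_T^{p/2}\\
&\qquad\le C\,E\!\left(|\xi-X_T|^p+\Big(\int_0^T|f(r,Y_r)|\,dr\Big)^p+|V|_T^p+|K^X|_T^p\right),
\end{aligned}
\]
finite by hypothesis. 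Hence $M-M^X\in\mathcal{M}^p$, and since $M^X\in\mathcal{M}^p$ we conclude $M\in\mathcal{M}^p$.

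It remains to show $R\in\mathcal{V}^p_0$. The algebraic identity $R_T=Y_0-\xi-\int_0^Tf(r,Y_r)\,dr-V_T+M_T$ already yields $R_T\in L^p(\FF_T)$, so only the total variation $|R|_T=R_T^++R_T^-$ is at issue. The enhanced Itô inequality above also controls $E\int_0^T|Y_--X_-|^{p-1}\,d(R^++R^-)_r$, but the weight $|Y_--X_-|^{p-1}$ may vanish on $\{Y_-=X_-\}\cap\mathrm{supp}(dR^\pm)$, so this does not by itself give $|R|_T\in L^p$. To close this gap, invoke the approximants $\bar A^n\nearrow R^-$ and $\bar K^n\to R^+$ from Theorem \ref{tw2.1}, which arise from one-barrier RBSDEs: applying the same $|\bar Y^n-X|^p$ argument to each $\bar Y^n$ produces $L^p$-bounds on $\bar A^n,\bar K^n$ uniform in $n$, and Fatou yields $R^-_T,R^+_T\in L^p$. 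This boundary-set treatment is the main obstacle: the direct $|Y-X|^p$ inequality alone is insufficient, and the structural penalization of Theorem \ref{tw2.1} must be brought in to complete the conclusion $R\in\mathcal{V}^p_0$.
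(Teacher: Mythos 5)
Your necessity argument is essentially the paper's (take $X=Y$ and use the $L^p$-integrability of $\xi,V,R,M$ to get $\int_0^T|f(r,Y_r)|\,dr\in L^p(\FF_T)$), and your route to $M\in\mathcal{M}^p$ --- comparing $Y$ with the Mokobodski semimartingale $X$ and exploiting the sign of $\widehat{Y_{-}-X_{-}}$ on the supports of $dR^{\pm}$ --- is a workable alternative to what the paper does. The genuine problem is the final step, and you have in fact diagnosed it correctly yourself: the $|Y-X|^p$ computation controls $\sup_t|R_t|$ but not the total variation $|R|_T=R^+_T+R^-_T$, because the first-order $dR$ contribution is sign-definite and gets discarded, while the equation determines only the difference $R^+-R^-$. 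Unfortunately the repair you propose does not close this gap. Applying the same $|\bar Y^n-X|^p$ argument to the approximants of Theorem \ref{tw2.1} meets exactly the same obstruction one level down: on $\mathrm{supp}(d\bar K^n)\subset\{\bar Y^n\le L\}\subset\{\bar Y^n\le X\}$ the weight $|\bar Y^n_{-}-X_{-}|^{p-1}\widehat{\bar Y^n_{-}-X_{-}}$ is nonpositive, and on $\mathrm{supp}(d\bar A^n)$ it is nonnegative, so both first-order terms are again dropped rather than estimated, and the equation for $\bar Y^n$ again yields only the difference $\bar K^n-\bar A^n$. No uniform $L^p$ bound on $\bar K^n_T+\bar A^n_T$ emerges, so Fatou has nothing to act on.

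The paper resolves this by reducing to a single monotone reflecting process \emph{before} doing the $L^p$ estimate. Via (\ref{eqA.A}) and (\ref{eq2.5}) one dominates $dR^-$ by $d\bar D$, where $\bar D$ is the reflecting process of a one-barrier (upper) RBSDE; since $\bar D$ is a single increasing process, $\bar D_{\tau_k}$ is read off directly from its own equation, so the Burkholder--Davis--Gundy inequality gives $E(\bar D_{\tau_k})^p\le c_pE\bigl(\sup_{t}|Y_t|^p+(\int_0^T|f(r,Y_r)|\,dr)^p+(\int_0^Td|V|_r)^p+([M]_{\tau_k})^{p/2}\bigr)$, and feeding this back into the It\^o estimate $E([M]_{\tau_k})^{p/2}\le c_pE(\cdots+\alpha(\bar D_{\tau_k})^p)$ with $\alpha$ small absorbs the troublesome term; this bootstrap delivers $M\in\mathcal{M}^p$ and the increasing process in $\mathcal{V}^p$ simultaneously. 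Once $R^-\in\mathcal{V}^p_0$ is known, $(Y,M,R^+)$ solves the lower-barrier equation with right-hand side $f+dV-dR^-$ and the same bootstrap handles $R^+$. If you wish to keep your comparison-with-$X$ derivation of $M\in\mathcal{M}^p$, you still need this one-barrier reduction (or some other device that separates $R^+$ from $R^-$) to conclude; the penalization scheme of Theorem \ref{tw2.1} by itself does not supply the missing $L^p$ bounds.
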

\begin{dow}
Assume that (H1)--(H3), (H4${}^*$) are satisfied and $(Y,M,R)\in
\mathcal{S}^p\otimes\mathcal{M}_0^p\otimes\mathcal{V}^p_0$ is a
solution of RBSDE$(\xi,f+dV,L,U)$. Then by \cite[Lemma
2.5]{KR:JFA}, $\int_0^T|f(r,Y_r)|\,dr\in L^p(\FF_T)$. Therefore
(H6${}^*$) is satisfied with $X=Y$. Now assume that (H1)--(H3),
(H4${}^*$), (H6${}^*$) are satisfied. First observe that thanks to
(\ref{eqA.A}) we may assume that $(Y,M,R)$ is a solution of RBSDE
with one reflecting barrier, say lower, i.e. we may assume that
$R$ is an increasing process. By Lemma \ref{lm5.2}, $Y\in
\mathcal{S}^p$ and $E(\int_0^T|f(r,Y_r)|\,dr)^p<\infty$. From
these  properties of $Y$ and th fact that $R$ is predictable it
follows that there exists a stationary sequence $\{\tau_k\}\subset
\mathcal{T}$ such that
\[
E([M]_{\tau_k})^{p/2}+E\big(\int_0^{\tau_k}\,d
|R|_{r}\big)^{p}<\infty,\quad k\ge1.
\]
By It\^o's formula and Young's inequality,
\begin{align*}
E([M]_{\tau_k})^{p/2} &\le c_pE\Big(\sup_{t\le T}|Y_t|^p
+\big(\int_0^{\tau_k}|f(r,Y_r)|\,dr\big)^{p}\\
&\qquad\qquad+\big(\int_0^{\tau_k}d|V|_r\big)^p
+\big(\int_0^{\tau_k}|Y_{r-}|\,d|R|_r\big)^{p/2}\Big).
\end{align*}
Using once again Young's we see that for every $\alpha>0$,
\begin{align}
\label{eq.bracket} E([M]_{\tau_k})^{p/2}&\le c_pE\Big(\sup_{t\le
T}|Y_t|^p +\big(\int_0^{\tau_k}|f(r,Y_r)|\,dr\big)^{p}\nonumber\\
&\qquad\qquad+\big(\int_0^{\tau_k}d|V|_r\big)^p
+\alpha\big(\int_0^{\tau_k}\,d|R|_r\big)^{p}\Big).
\end{align}
On the other hand, since $R_t=Y_0-Y_t-\int_0^t f(r,Y_r)\,dr
-\int_0^t dV_r+\int_0^t dM_r$ for $t\in [0,T]$, applying the
Burkholder-Davis-Gundy inequality we obtain
\[
E\big(\int_0^{\tau_k}\,d |R|_{r}\big)^{p} \le
c_pE\Big(\sup_{t\le T}|Y_t|^p
+\big(\int_0^{\tau_k}|f(r,Y_r)|\,dr\big)^{p}
+\big(\int_0^{\tau_k}d|V|_r\big)^p+([M]_{\tau_k})^{p/2}\Big).
\]
The above inequality and (\ref{eq.bracket}) imply that
$M\in\mathcal{M}^p$. Hence $R\in \mathcal{V}^p_0$, because  we
already know that $Y\in \mathcal{S}^p$.
\end{dow}

\nsubsection{Reflected BSDEs with generator depending on $z$}

Assume that $\FF$ satisfies the usual conditions and the Hilbert
$L^2(\FF_T)$ is separable. Then (see \cite{DavisVaraiya,Protter})
there exists a sequence $\{M^i\}\subset\MM^2_0$ such that
$\{M_i\}$ are orthogonal, i.e. $EM_T^iM_T^j=0$ for $i\neq j$, and
for every $N\in\MM^2$,
\begin{equation}
\label{eq5.1}
N_t=N_0+\sum_{i=0}^\infty\int_0^tZ_r^i\,dM_r^i,\quad t\in[0,T]
\end{equation}
for some sequence $\{Z^i\}$ of predictable processes such that
\[
E\sum_{i=0}^\infty\int_0^T|Z^i_t|^2\,d\langle M^i\rangle_t<\infty.
\]
For given $A\in\VV^1_0$ let us denote by $\mu_A$ the measure on
$\BB([0,T])\otimes\FF_T$ defined as
\[
\mu_A(B)=E\int_0^T\mathbf{1}_B(t,w)\,dA_t, \quad
B\in\BB([0,T])\otimes\FF_T.
\]
It is known that the sequence $\{M^i\}$ may be chosen so that
$\mu_{\langle M^i\rangle}\gg\mu_{\langle M^j\rangle}$ for $i<j$.
In that case the sequence $\{M^i\}$ is unique in the following
sense: if $\{\hat{M}_i\}\subset\MM^2_0$ is an another sequence
satisfying the same conditions as $\{M^i\}$ then $\mu_{\langle
M^i\rangle}$ is equivalent to $\mu_{\langle \hat{M}^i\rangle}$ for
every $i\in\BN$. By using the localization procedure one can show
that every locally square integrable $\FF$ martingale admits
representation (\ref{eq5.1}) with $\{Z^i\}$ such that
\begin{equation}
\label{eq5.2a} P\Big(\sum_{i=1}^\infty\int_0^T|Z_t^i|^2\,d\langle
M^i\rangle_t<\infty\Big)=1.
\end{equation}

Set
\[
m^i(t,w)=\frac{d\mu^c_{\langle M^i\rangle}}{dt\otimes dP}(t,w),
\quad (r,w)\in[0,T]\times\Omega,
\]
where $\mu^c_{\langle M^i\rangle}$ is the absolutely continuous
part, with respect to $dt\otimes P$, of the measure $\mu_{\langle
M^i\rangle}$. By $M^0$ we denote the space of all processes
$Z=(Z^1,Z^2,\dots)$ such that $Z^i$ is predictable for each
$i\in\BN$ and (\ref{eq5.2a}) is satisfied. By $M^p$, $p\ge 1$, we
denote the space
\[
M^p=\{Z\in M^0;E\Big(\sum_{i=1}^\infty\int_0^T|Z_t^i|^2\, d\langle
M^i\rangle_t\Big)^{p/2}<\infty\}.
\]
We also use the following notation
\[
\|z\|^2_{M_t}=\sum_{i=1}^\infty|z^i|^2m^i(t,w),\quad
z\in\BR^{\infty}.
\]

Let $\xi$ be an $\FF_T$ measurable random variable, $V\in\VV_0$,
$L,U$ be progressively measurable processes and
$f:[0,T]\times\Omega\times\BR\times\BR^\infty\rightarrow\BR$ be
such that $f(\cdot,\cdot,y,z)$ is progressively measurable for
every $(y,z)\in\BR\times\BR^\infty$. We will need the following
hypotheses.
\begin{enumerate}
\item[(A1)] There is $\mu\in\BR$ such that for a.e. $t\in[0,T]$
and every $y,y'\in\BR$, $z\in\BR^\infty$,
\[
(f(t,y,z)-f(t,y',z))(y-y')\le\mu|y-y'|^2.
\]
\item[(A2)] $[0,T]\ni t\mapsto f(t,y,z)\in L^1(0,T)$ for every
$y\in\BR$, $z\in\BR^\infty$,
\item[(A3)] $\BR\ni y\mapsto f(t,y,z)$ is continuous for a.e. $t\in[0,T]$
and for every $z\in\BR^\infty$,
\item[(A4)] There is $\lambda\ge 0$ such that for a.e. $t\in[0,T]$
and every $y\in\BR$, $z,z'\in\BR^\infty$,
\[
|f(t,y,z)-f(t,y,z')|\le\lambda\|z-z'\|_{M_t}.
\]
\item[(A5)] $\xi\in L^2(\FF_T)$, $V\in\VV_0^2$, $f(\cdot,0,0)\in L^2(\FF)$,
\item[(A6)] There exists $X\in\VV^2\oplus\MM^2$ such that $L_t\le X_t\le U_t$
for a.e. $t\in[0,T]$ and $f(\cdot,X,0)\in L^2(\FF)$.
\item[(A${}^*$)]There exist $\lambda\ge 0$, $\mu\in\BR$ and a non-negative
progressively measurable process $f_t$ such that for every
$y\in\BR$ and $z\in\BR^\infty$,
\[
\hat{y}f(t,y,z)\le f_t+\mu|y|+\lambda\|z\|_{M_t},\quad
dt\otimes dP\mbox{-a.s.}
\]
\end{enumerate}

\begin{df}
We say that a triple $(Y,Z,R)$ is a solution of
RBSDE$(\xi,f+dV,L,U)$ if $Z\in M^0$ and the triple $(Y,M,R)$,
where $M_t=\sum_{i=1}^\infty \int_0^t Z^i_r\,dM^i_r$, $t\in[0,T]$,
is a solution of RBSDE$(\xi,\hat{f}+dV,L,U)$ with
\[\hat{f}(t,y)=f(t,y,Z_t).
\]
\end{df}

\begin{stw}
\label{stw7.2} Assume that \mbox{\rm(A${}^*$)} is satisfied and
$\xi\in L^2(\FF_T)$, $f_t\in L^2(\FF)$, $V\in\VV_0^2$, $(Y,Z)\in
\mathcal{S}^2\otimes M^0$. Moreover, assume that $Y$ is a
semimartingale and its martingale part in the Doob-Meyer
decomposition is of the form
$M=\sum_{i=1}^{\infty}\int_0^{\cdot}Z^i_r\,dM^i_r$. Write
$(Y^\alpha,Z^\alpha)=(e^{\alpha t}Y_t,e^{\alpha t} Z_t)$,
$dV^\alpha_t=e^{\alpha t}\,dV_t$ and
\[
f^\alpha(t,y,z)=e^{\alpha t}f(t,e^{-\alpha t}y,e^{-\alpha
t}z)-\alpha y.
\]
Then if
\begin{align}
\label{eq.al7}
\nonumber
|Y^\alpha_s|^2+\int_s^t\,d[Y^\alpha]_r
&\le |Y^\alpha_t|^2+2\int_s^t Y^\alpha_r
f^\alpha(r,Y^\alpha_r,Z^\alpha_r)\,dr+
2\int_s^tY^\alpha_{r-}\,dV^\alpha_r\\&
\quad\quad-2\sum_{i=1}^\infty\int_s^tY^\alpha_{r-}
Z_r^{\alpha,i}\,dM_r^i,
\quad 0\le s\le t\le T
\end{align}
for some $\alpha\ge \mu+\lambda^2$ then $Z\in M^2$ and
there is $C>0$ such that
\begin{align*}
&E\sup_{t\le T}|Y^\alpha_t|^2
+E\Big(\sum_{i=1}^\infty\int_0^T|Z_r^{\alpha,i}|^2\,d\langle
M^i\rangle_r\Big)\\
&\qquad\le CE\Big(|\xi^\alpha|^2+\big(\int_0^Td|V^\alpha|_r\big)^2
+\big(\int_0^Tf^\alpha_r\,dr\big)^2\Big),
\end{align*}
where $\xi^\alpha=e^{\alpha T}\xi,\, f^{\alpha}_r=e^{\alpha
t}f_r$.
\end{stw}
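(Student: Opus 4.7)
The strategy closely parallels Proposition \ref{stw5.2} specialized to $p=2$, with an extra Young step to absorb the $z$-dependence. First reduce to $\alpha=0$, so by (A*) we may take $\mu+\lambda^2\le 0$. Applying (A*) to the term $2Y_r f(r,Y_r,Z_r)$ in (\ref{eq.al7}) and estimating the cross term by
\[
2\lambda|Y_r|\,\|Z_r\|_{M_r}\le\epsilon\lambda^2|Y_r|^2+\epsilon^{-1}\|Z_r\|_{M_r}^2
\]
for a fixed $\epsilon\in(1,2)$, so that $2\mu+\epsilon\lambda^2\le(\epsilon-2)\lambda^2\le 0$, reduces (\ref{eq.al7}) to
\[
|Y_s|^2+\int_s^t d[Y]_r\le|Y_t|^2+2\int_s^t|Y_r|f_r\,dr+\epsilon^{-1}\int_s^t\|Z_r\|_{M_r}^2\,dr+2\int_s^t Y_{r-}\,dV_r-2\int_s^t Y_{r-}\,dM_r.
\]

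The crucial ingredient is the inequality $E\int_0^T\|Z_r\|_{M_r}^2\,dr\le E[Y]_T$. It rests on two observations. First, the Lebesgue decomposition of $\mu_{\langle M^i\rangle}$ w.r.t.\ $dt\otimes dP$ has density $m^i$, so for a.e.\ $\omega$ the pathwise measure satisfies $d\langle M^i\rangle_r(\omega)\ge m^i(r,\omega)\,dr$, and by orthogonality of $\{M^i\}$
\[
\int_0^T\|Z_r\|_{M_r}^2\,dr\le\sum_{i=1}^\infty\int_0^T|Z_r^i|^2\,d\langle M^i\rangle_r=\langle M\rangle_T.
\]
Second, writing $Y=Y_0+K+M$ with $K$ predictable of finite variation, the dual predictable projection of $[K,M]=\sum\Delta K\,\Delta M$ vanishes (at predictable jump times $\tau$, $\Delta K_\tau$ is $\FF_{\tau^-}$-measurable while $E[\Delta M_\tau|\FF_{\tau^-}]=0$; at totally inaccessible times $\Delta K=0$), so pathwise $\langle Y\rangle=\langle M\rangle+[K]\ge\langle M\rangle$, and taking expectations $E[Y]_T=E\langle Y\rangle_T\ge E\langle M\rangle_T$.

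Localize with a fundamental sequence $\{\tau_k\}$ of the local martingale $\int Y_{r-}\,dM_r$, take expectations at $s=0$, $t=\tau_k$, bound $2E\int_0^{\tau_k}|Y_r|f_r\,dr\le\beta E\sup_{t\le T}|Y_t|^2+\beta^{-1}E(\int_0^T f_r\,dr)^2$ and similarly for the $dV$-term by Young's inequality, and let $k\to\infty$ by monotone convergence to absorb $\epsilon^{-1}E\int_0^T\|Z\|_M^2\,dr$ into $(1-\epsilon^{-1})E[Y]_T$ on the left. For the sup bound, I would drop $\int d[Y]\ge 0$ in the displayed inequality, take $\sup_{t\le T}$ on both sides, and control the martingale term by Burkholder-Davis-Gundy:
\[
E\sup_t\Big|\int_t^T Y_{r-}\,dM_r\Big|\le c\,E\Big(\sup_{t\le T}|Y_t|\sqrt{[M]_T}\Big)\le\tfrac14 E\sup_{t\le T}|Y_t|^2+c' E[M]_T.
\]
Since $E[M]_T=E\langle M\rangle_T\le E[Y]_T$, choosing $\beta$ and the BDG--Young constants small enough closes the estimate and delivers the stated bound on $E\sup|Y|^2$ together with $E\langle M\rangle_T$ (which in turn gives $Z\in M^2$). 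The main technical obstacle is the identity $\langle Y\rangle=\langle M\rangle+[K]$, i.e.\ that the predictable compensator of $[K,M]$ is zero; once granted, the rest is routine Young and Burkholder-Davis-Gundy manipulation.
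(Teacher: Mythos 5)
Your proof is correct and follows essentially the same route as the paper's: apply (A${}^*$) and Young's inequality to the cross term $2\lambda|Y_r|\,\|Z_r\|_{M_r}$ so that the $\|Z\|^2_{M_r}$ contribution is absorbed into the quadratic-variation term on the left via $E\int_0^T\|Z_r\|^2_{M_r}\,dr\le E\langle M\rangle_T\le E[Y]_T$, then localize, and finish with the Burkholder--Davis--Gundy and Young inequalities. Your explicit justification of $E[Y]_T\ge E\langle M\rangle_T$ through the vanishing of the compensator of $[K,M]$ is in fact more careful than the paper's one-line appeal to a ``well known'' identity, which is stated there as an equality although only this inequality is needed (and true in general).
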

\begin{dow}
For simplicity we assume that $\alpha=0$. By assumption (A${}^*$),
for every $\tau\in\mathcal{T}$ we have
\begin{align}
\label{eq7.10} \nonumber &|Y_t|^2+\int_s^{\tau}\,d[Y^\alpha]_r
\le|Y_\tau|^2+2\int_t^\tau(|Y_r|f_r+\mu|Y_r|^2)\,dr
+2\int_t^\tau|Y_r|\,d|V|_r\\
&\qquad\quad+ 2\lambda\int_t^\tau|Y_r|\|Z_r\|_{M_r}\,dr
-2\sum_{i=1}^\infty \int_t^\tau Y_{r-} Z_r^i\,dM_r^i.
\end{align}
We have
\[
2\lambda|Y_r|\|Z_r\|_{M_r}\le
2\lambda^2|Y_r|^{2}+\frac{1}{2}\|Z_r\|_{M_r}.
\]
Since $\mu+\lambda^2\le\le\alpha\le 0$, from the above inequality
and (\ref{eq7.10}) it follows that
\begin{align}
\label{eq7.13} \nonumber &|Y_t|^2+\int_t^\tau\,d[Y]_r
\le|Y_\tau|^2+2\int_t^\tau|Y_r|f_r\,dr
+2\int_t^\tau|Y_r|\,d|V|_r\nonumber\\
&\qquad+\frac12\int_t^\tau
\|Z_r\|_{M_r}\,dr-2\sum_{i=1}^\infty\int_t^\tau
Y_{r-}Z^i_r\,dM_r^i.
\end{align}
It is well known that
\begin{align}
\label{eq7.19} & E\int_t^\tau\,d[Y]_r =E\int_t^{\tau}
d[M]_r=E\int_t^\tau\, d\langle M\rangle_r
\end{align}
For $k\in\BN$ write
$\tau_k=\sigma_k\wedge\delta_k$, where $\{\sigma_k\}$ is a
fundamental sequence for the local martingale
$\sum_{i=1}^{\infty}\int_0^\cdot Y_{r-} Z_r^i\,dM_r^i$
and
\[
\delta_k=\inf\{t\ge 0,\int_0^t \|Z_r\|^2_{M_r}\,dr\ge k\}.
\]
Substituting  (\ref{eq7.19}) into (\ref{eq7.13})
and then replacing $\tau$ by $\tau_k$ in (\ref{eq7.13}),
integrating and  letting $k\rightarrow\infty$  we get
\begin{equation}
\label{eq7.14} E|Y_t|^2+\frac12
E\sum_{i=1}^\infty\int_0^T|Z^i|^2_r\,d\langle M^i\rangle_r\le EX,
\end{equation}
where
$X=|\xi|^2+2\int_0^T|Y_r|f_r\,dr+2\int_0^T|Y_r|\,d|V|_r$.
Furthermore, by (\ref{eq7.13}),
\begin{align*}
E\sup_{t\le T}|Y_t|^2&\le EX+2E\sup_{t\le T}
\Big|\sum_{i=1}^\infty\int_t^T
Y_{r-}Z^i_r\,dM^i_r\Big|\\
&\le EX+c_1 2E\sup_{t\le T}|Y_t|
\Big(\sum_{i=1}^\infty\int_t^T
|Z^i_r|^2\,d[M^i]_r\Big)\\
& \le EX+\beta E\sup_{t\le T}|Y_t|^2+\beta^{-1} c_1 2
E\sum_{i=1}^\infty\int_t^T |Z^i_r|^2\,d\langle M^i\rangle_r.
\end{align*}
Taking $\beta>0$ sufficiently small and using (\ref{eq7.14}) we
obtain
\[
E\sup_{t\le T}|Y_t|^p+\frac12
E\sum_{i=1}^\infty\int_0^T|Z^i|^2_r\,d\langle M^i\rangle_r\le
c_2EX.
\]
Combining this with the estimate
\[
EX\le\beta E\sup_{t\le T}|Y_t|^2+\beta^{-1}c_4
E\Big(|\xi|^2+\big(\int_0^Tf_r\,dr\big)^2+\big(\int_0^Td|V|_r\big)^2\Big)
\]
we get the desired result.
\end{dow}
\medskip

\begin{uw}
If $(Y,Z,R)=(Y^1,Z^1,R^1)-(Y^2,Z^2,R^2)$, where $(Y^i,Z^i,R^i)$,
is a solution of RBSDE$(\xi^i,f^i+dV^i,L,U)$, $i=1,2$, then  from
Propositions \ref{stw5.1}, \ref{stw5.2} and condition (c) of the
definition of a solution of RBSDE it follows that for every
$\alpha\in\mathbb{R}$ the pair $(Y^{\alpha},Z^{\alpha})$ satisfies
(\ref{eq.al7})  with $\xi=\xi^1-\xi^2$,
$f(r,y,z)=f^1(r,y+Y^2_r,z+Z^2_r)-f^2(r,Y^2_r,Z^2_r)$, $V=V^1-V^2$.
We will use this fact in the sequel of the paper without further
explanations.
\end{uw}

\begin{stw}
Assume \mbox{\rm{(A4)}}. Then there exists at most one solution
$(Y,Z,R)$ of RBSDE $(\xi,f+dV,L,U)$ such that $Y\in \mathcal{S}^2$.
\end{stw}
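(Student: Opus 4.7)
The plan is: take two candidate solutions $(Y^1,Z^1,R^1)$ and $(Y^2,Z^2,R^2)$, set $\bar Y=Y^1-Y^2$, $\bar Z=Z^1-Z^2$, $\bar R=R^1-R^2$, and prove $\bar Y\equiv 0$; then $\bar R=0$ and $\bar Z=0$ will follow from standard decomposition uniqueness. The machinery is already prepared by the Remark immediately preceding the statement, which guarantees that $(\bar Y^\alpha,\bar Z^\alpha)$ satisfies inequality (\ref{eq.al7}) for every $\alpha$, with zero data ($\xi=0$, $V=0$) and effective generator
\[
\tilde f(r,y,z)=f(r,y+Y^2_r,z+Z^2_r)-f(r,Y^2_r,Z^2_r).
\]

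The key technical point buried in that Remark is the sign $\int_s^t \bar Y_{r-}\,d\bar R_r\le 0$. I would prove it by choosing the càdlàg processes $\hat L:=Y^1\wedge Y^2$ and $\hat U:=Y^1\vee Y^2$, both of which sandwich each $Y^i$ between $L$ and $U$, and then applying condition (c) of the definition to each solution separately. Using the pointwise identities $(Y^1-Y^2)^+=Y^1-\hat L=\hat U-Y^2$ and their symmetric counterparts for $(Y^1-Y^2)^-$, and decomposing $d\bar R=dR^{1,+}-dR^{1,-}-dR^{2,+}+dR^{2,-}$, each of the eight resulting pieces is either zero (by (c)) or minus an integral of a non-negative integrand against a non-negative measure; their sum is $\le 0$.

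To apply Proposition \ref{stw7.2} to $(\bar Y^\alpha,\bar Z^\alpha)$ I need $\tilde f$ to satisfy (A${}^*$) with $f_t\equiv 0$. (A4) controls the $z$-increment of $\tilde f$ by $\lambda\|z\|_{M_r}$, and monotonicity of $f$ in $y$ at the frozen value $z=Z^2_r$ yields $\hat y[f(r,y+Y^2_r,Z^2_r)-f(r,Y^2_r,Z^2_r)]\le\mu|y|$, so combined $\hat y\tilde f(r,y,z)\le\mu|y|+\lambda\|z\|_{M_r}$. Since $\bar Y\in\mathcal{S}^2$, Proposition \ref{stw7.2} with $\alpha\ge\mu+\lambda^2$ and zero data then gives $E\sup_{t\le T}|\bar Y^\alpha_t|^2=0$, whence $Y^1=Y^2$.

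With $\bar Y\equiv 0$ the two BSDEs subtract to $\bar R_t=\sum_{i=1}^\infty\int_0^t\bar Z^i_r\,dM^i_r$ for all $t$, equating a predictable finite-variation process to a local martingale that both vanish at $0$; uniqueness of the Doob--Meyer decomposition forces both to be identically zero, and orthogonality of $\{M^i\}$ then yields $Z^1=Z^2$ in $M^0$. The main obstacle is the non-positivity of $\int\bar Y_{r-}\,d\bar R_r$: the reflecting processes do not cancel, and extracting the sign genuinely requires the minimality condition (c) against the non-obvious comparison barriers $Y^1\wedge Y^2$ and $Y^1\vee Y^2$; once that is in hand, the remainder is the standard Itô--Gronwall argument packaged inside Proposition \ref{stw7.2}.
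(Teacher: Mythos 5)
Your argument is correct and is exactly the route the paper takes: the published proof is a one-line appeal to Proposition \ref{stw7.2} via the preceding Remark, and you have simply made explicit the two points that Remark hides, namely the sign of $\int_s^t \bar Y_{r-}\,d\bar R_r\le 0$ obtained from condition (c) with $\hat L=Y^1\wedge Y^2$, $\hat U=Y^1\vee Y^2$ (the same comparison barriers the paper itself uses, e.g.\ in Proposition \ref{stw1.3}), and the verification of \mbox{\rm(A${}^*$)} for the shifted generator, which, as you correctly observe, needs the monotonicity (A1) in addition to the stated (A4). One cosmetic point: after subtracting the two equations the drift term $\int_0^t\bigl(f(r,Y^1_r,Z^1_r)-f(r,Y^2_r,Z^2_r)\bigr)\,dr$ does not disappear on its own, but Proposition \ref{stw7.2} already yields $\bar Z=0$ in $M^2$ alongside $\bar Y=0$ (its conclusion bounds the $Z$-term by the same vanishing right-hand side), so that drift vanishes by (A4) and $\bar R=0$ follows at once.
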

\begin{dow}
Follows immediately from Proposition \ref{stw7.2}.
\end{dow}

\begin{tw}
\label{tw5.1} Assume $\mbox{\rm{(A1)--(A6)}}$.
Then there exists a solution $(Y,Z,R)\in \mathcal{S}^2\otimes
\mathcal{M}^2\otimes \mathcal{V}^2$ of
$\mbox{\rm{RBSDE}}(\xi,f+dV,L,U)$.
\end{tw}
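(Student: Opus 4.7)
The plan is to reduce the problem to the $z$-independent case via a Picard iteration on $M^2$. Define a map $\Phi\colon M^2\to M^2$ as follows: for $W\in M^2$, set $\hat f_W(t,y)=f(t,y,W_t)$. I would first check that $\hat f_W$ satisfies \mbox{(H1)--(H3)} (immediate from (A1)--(A3) together with the continuity in $z$ implied by (A4)) as well as (H4${}^*$) and (H6${}^*$) for $p=2$. Indeed (A4) gives $|\hat f_W(t,0)|\le|f(t,0,0)|+\lambda\|W_t\|_{M_t}$, and by Cauchy--Schwarz
\[
E\big(\int_0^T\|W_r\|_{M_r}\,dr\big)^2\le T\,E\int_0^T\|W_r\|_{M_r}^2\,dr\le T\,E\sum_{i=1}^{\infty}\int_0^T|W^i_r|^2\,d\langle M^i\rangle_r<\infty,
\]
which together with (A5) yields (H4${}^*$); (A6) handles (H6${}^*$) analogously with $X$ in place of $0$. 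Then Theorem \ref{tw3.p} produces a unique solution $(Y,M,R)\in\mathcal{S}^2\otimes\mathcal{M}^2\otimes\mathcal{V}^2_0$ of RBSDE$(\xi,\hat f_W+dV,L,U)$, and the representation theorem \mbox{(\ref{eqi.5})} provides a unique $Z\in M^2$ with $M=\sum_i\int_0^\cdot Z^i\,dM^i$. Set $\Phi(W)=Z$.

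The core step is to show that $\Phi$ is a strict contraction on $M^2$ in the weighted norm $\|Z\|^2_\beta=E\sum_i\int_0^Te^{\beta r}|Z^i_r|^2\,d\langle M^i\rangle_r$ for $\beta$ large enough. Let $W^1,W^2\in M^2$ with associated triples $(Y^i,Z^i,R^i)$, and write $\bar Y=Y^1-Y^2$, $\bar Z=Z^1-Z^2$, $\bar R=R^1-R^2$. Choosing the c\`adl\`ag processes $\hat L=Y^1\wedge Y^2$ and $\hat U=Y^1\vee Y^2$ in condition (c) of the definition of RBSDE applied to each $(Y^i,Z^i,R^i)$ gives
\[
\int_0^T(Y^i_{t-}-(Y^1\wedge Y^2)_{t-})\,dR^{i,+}_t=0,\quad \int_0^T((Y^1\vee Y^2)_{t-}-Y^i_{t-})\,dR^{i,-}_t=0
\]
for $i=1,2$, so $R^{i,+}$ is carried by $\{Y^i_{t-}\le Y^{3-i}_{t-}\}$ and $R^{i,-}$ by $\{Y^i_{t-}\ge Y^{3-i}_{t-}\}$; expanding $\int_0^T\bar Y_{t-}\,d\bar R_t$ into the four pieces $\int\bar Y_{t-}\,dR^{1,+}$, $-\int\bar Y_{t-}\,dR^{2,+}$, $-\int\bar Y_{t-}\,dR^{1,-}$, $\int\bar Y_{t-}\,dR^{2,-}$, each is seen to be $\le 0$.

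Applying It\^o's formula to $e^{\beta t}\bar Y_t^2$, using the sign $\int_0^T\bar Y_{t-}\,d\bar R_t\le 0$ just established, the monotonicity bound (A1), and the Lipschitz bound $|\hat f_{W^1}(r,Y^1_r)-\hat f_{W^2}(r,Y^1_r)|\le\lambda\|W^1_r-W^2_r\|_{M_r}$ from (A4), together with Young's inequality $2\lambda|\bar Y_r|\|W^1_r-W^2_r\|_{M_r}\le\gamma|\bar Y_r|^2+\lambda^2\gamma^{-1}\|W^1_r-W^2_r\|_{M_r}^2$, then selecting $\beta>2\mu+\gamma$ with $\gamma=2\lambda^2$, after standard localization (exactly as in the proof of Proposition \ref{stw7.2}) one obtains
\begin{equation*}
\|\bar Z\|^2_\beta\le\frac{\lambda^2}{\gamma}\,E\int_0^Te^{\beta r}\|W^1_r-W^2_r\|^2_{M_r}\,dr\le\frac12\|W^1-W^2\|^2_\beta,
\end{equation*}
where the last step uses $\|\cdot\|_{M_r}^2\,dr\le\sum_i|\cdot|^2\,d\langle M^i\rangle_r$. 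The Banach fixed point theorem therefore yields a unique $Z^\ast\in M^2$ with $\Phi(Z^\ast)=Z^\ast$, and the triple $(Y,Z^\ast,R)$ corresponding to $W=Z^\ast$ is the desired solution, lying in $\mathcal{S}^2\otimes M^2\otimes\mathcal{V}^2$ by construction. The main obstacle is the non-positivity of $\int_0^T\bar Y_{t-}\,d\bar R_t$: with two-sided reflection one must carefully split into the four terms and invoke condition (c) for both barriers with the specific envelopes $Y^1\wedge Y^2$ and $Y^1\vee Y^2$; once this is in hand, the rest follows the pattern of the $z$-independent energy estimate of Proposition \ref{stw7.2}.
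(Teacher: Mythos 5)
Your proof is correct, and it follows the same overall strategy as the paper: freeze the $z$-argument, solve the resulting $z$-independent reflected equation via Theorem \ref{tw3.p}, and close the loop with Banach's fixed point theorem, the key estimate being the energy inequality for the difference of two solutions in which $\int_0^T\bar{Y}_{t-}\,d\bar{R}_t\le 0$. The differences are in the mechanics of the contraction. The paper defines $\Phi$ on $\mathcal{S}^2\otimes M^2$ and estimates the difference in the unweighted norm, obtaining a constant $\lambda^2CT$; this is a contraction only for small $T$, so the paper then pastes solutions over a finite partition of $[0,T]$. You instead work on $M^2$ alone with the exponentially weighted norm $\|\cdot\|_\beta$ and obtain a global-in-time contraction with constant $1/2$, which avoids the (routine but slightly delicate) gluing of reflected solutions across subintervals and the verification of the minimality condition on the concatenated process. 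You also make explicit, via condition (c) applied with $\hat{L}=Y^1\wedge Y^2$ and $\hat{U}=Y^1\vee Y^2$, why each of the four pieces of $\int_0^T\bar{Y}_{t-}\,d\bar{R}_t$ is nonpositive; the paper delegates exactly this point to Proposition \ref{stw7.2} together with the remark following it. Both routes are sound, and yours is arguably the cleaner packaging of the same idea.
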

\begin{dow}
Let us define
\[
\Phi=(\Phi^1,\Phi^2):\mathcal{S}^2\otimes M^2\mapsto \mathcal{S}^2\otimes M^2
\]
as follows: for every $(X,H)\in \mathcal{S}^2\otimes M^2$,
$(\Phi^1(X,H),\Phi^2(X,H))$ are the first two components of the
solution of RBSDE$(\xi,f_H+dV,L,U)$ with
$f_H(t,x,y)=f(t,x,y,H_t)$. By Theorem \ref{tw3.p} the mapping
$\Phi$ is well defined. Let $(X^i,H^i)\in \mathcal{S}^2\otimes
M^2$, $i=1,2$, and let $(X,H)=(X^1,H^1)-(X^2,H^2)$,
$(Y^i,Z^i)=\Phi(X^i,H^i)$, $i=1,2$ and
$(Y,Z)=(Y^1,Z^1)-(Y^2,Z^2)$. Observe that
\[
Y_t=\int_t^TF(r,Y_r)\,dr+\int_t^T\,dR_r^1-dR_r^2-
\sum_{i=1}^\infty\int_t^TZ^i_r\,dM_r^i,\quad t\in[0,T],
\]
where
\[
F(r,y)=f(r,y+Y^2_r,H^1_r)-f(r,Y^2_r,H^2_r)
\]
and $R^i$ is the finite variation process such that the triple
$(Y^i,Z^i,R^i)$ is a solution of RBSDE$(\xi,f_H+dV,L,U),\, i=1,2$.
By Proposition \ref{stw7.2} and (A4),
\begin{align}
\label{eq5.18} \nonumber &E\sup_{t\le
T}|Y_t|^2+E\Big(\sum_{i=1}^\infty\int_0^T|Z^i_r|^2\,d \langle
M^i\rangle_r\Big)\le CE\Big(\int_0^T|F(r,0)|\,dr\Big)^2\\
&\quad\le\lambda CE\Big(\int_0^T\|H_r\|_{M_r}\,dr\Big)^2\le
\lambda^2CTE\Big(\sum_{i=1}^\infty\int_0^T|H^i_r|^2\,d
\langle M^i\rangle_r\Big).
\end{align}
It follows that $\Phi$ is a contraction on $\mathcal{S}^2\otimes
M^2$ for a sufficiently small $T$, so  using Banach's principle we
can construct unique solutions on small intervals. Therefore
dividing the interval $[0,T]$ into a finite number of small
intervals and using the standard arguments we can construct a
solution $(Y,Z,R)$ of RBSDE$(\xi,f+dV,L,U)$ on the whole interval
$[0,T]$. Of course $(Y,Z)\in \mathcal{S}^2\otimes M^2$.
Consequently, $R\in \mathcal{V}^2_0$ by Theorem \ref{tw3.p}.
\end{dow}
\medskip

To define solutions of equations with generators depending on $z$
one can use other than (\ref{eq5.1}) types of representation
theorems for martingales. One possibility is outlined below.

\begin{uw}
\label{uw5.1} Put $E=\BR^l\setminus\{0\}$ for some $l\ge 1$. Let
$B$ be a $d$-dimensional Wiener process and $N$ be an independent
of $B$ Poisson random measure on $\BR_+\times E$ with the
compensator $\nu(dt,de)=dt\otimes\lambda(de)$ such that
$\int_{E}1\wedge|e|^{2}\,\lambda(de)<\infty$. For $t\in [0,T]$,
$B\in\mathcal{B}(E)$ let us put $\tilde{N}([0,t]\times
B)=N([0,T]\times B)-\nu([0,t]\times B)$.  It is known (see
\cite{TangLi}) that every locally square integrable martingale $M$
has the representation
\begin{equation}
\label{eq5.r} M_t=M_0+\int_0^t Z_r\,dB_r
+\int_0^t\!\int_{E}H_r(e)\,\tilde{N}(dr,de), \quad t\in[0,T]
\end{equation}
for some predictable $\BRD$-valued (resp. $L^2(E,\lambda)$-valued)
process $Z$ (resp.  $H$). It is also known that the filtration
$\FF$ generated by $(B,N)$ is quasi-left continuous. Let
\[
f:\Omega\times[0,T]\times\BR\times \BRD\times L^2(E,\lambda)\rightarrow\BR
\]
be a measurable function such that $f(\cdot,y,z,v)$ is
progressively  measurable for every $(y,z,v)\in\BR\times\BRD\times
L^2(E,\lambda)$. After replacing representation (\ref{eq5.1}) by
(\ref{eq5.r}), we may define a solution of RBSDE$(\xi,f+dV,L,U)$
as a quadruple $(Y,Z,H,R)$ such that the triple $(Y,M,R)$ with $M$
given by (\ref{eq5.r}) is a solution of
RBSDE$(\xi,\hat{f}+dV,L,U)$ with
\[
\hat{f}(t,y)=f(t,y,Z_t,H_t),\quad (t,y)\in [0,T]\times\BR.
\]
If we now replace the norm $\|\cdot\|_{M_t}$ by the norm
$\|\cdot\|_{\sim}$ on $\BRD\times L^2(E,\lambda)$ given by
\[
\|(z,v)\|_{\sim}=|z|+\|v\|_{L^2(E,\lambda)}, \quad
(z,v)\in\BRD\times L^2(E,\lambda)
\]
and then repeat step by step the proofs of Proposition
\ref{stw7.2} and Theorem \ref{tw5.1} (with obvious changes) we
will get the existence and uniqueness results for solutions of
reflected BSDEs in the set-up of the definition given above.
\end{uw}


\begin{thebibliography}{99}

\bibitem{BDHPS}
Briand, Ph., Delyon, B., Hu, Y., Pardoux, E. and  Stoica, L.
(2003). $L^{p}$ solutions of Backward Stochastic Differential
Equations. {\em Stochastic Process. Appl.} {\bf 108} 109--129.


\bibitem{CohenElliott}
Cohen, S.N. and Elliott, R.J. (2012). Existence, Uniqueness and
Comparisons for BSDEs in General Spaces. {\em  Ann. Probab.} {\bf
40} 2264--2297.

\bibitem{DavisVaraiya}
Davis, M.H.A. and Varaiya, P. (1974). The Multiplicity of an
Increasing Family of $\sigma$-Fields. {\em Ann. Probab.} {\bf 2}
958--963.


\bibitem{ElKaroui}
El Karoui, N. (1981). Les aspects probabilistes du contr\^ole
stochastique. {\em Lecture Notes in Math. } {\bf 876} (1981)
73--238.

\bibitem{Hamadene}
Hamad\`ene, S. (2002).  Reflected BSDEs with discontinuous barrier
and applications. {\em Stochastics Stochastics Rep.} {\bf 74}
571--596.

\bibitem{HamadeneHassani}
Hamad\`ene, S. and  Hassani, M. (2006). BSDEs with two reflecting
barriers driven by a Brownian and a Poisson noise and related
Dynkin game. {\em Electron. J. Probab.} {\bf 11} 121--145.

\bibitem{HamadenePopier}
Hamad\`ene, S. and Popier, A. (2012) $L^p$-solutions for reflected
backward stochastic differential equations. {\em Stoch. Dyn.} {\bf
12} 1150016, 35 pp.

\bibitem{HamadeneWang}
Hamad\`ene, S. and Wang, H. (2009). BSDEs with two RCLL reflecting
obstacles driven by Brownian motion and Poisson measure and
related mixed zero-sum game. {\em Stochastic Process.  Appl.} {\bf
119} 2881--2912.

\bibitem{Jacod}
Jacod, J. (1981). Convergence en loi de semimartingales et
variation quadratique, {\em Lecture Notes in Math.} {\bf 850}
547--560.

\bibitem{Kl:EJP}
Klimsiak, T. (2012) Reflected BSDEs with monotone generator. {\em
Electron. J. Probab.} {\bf 17}, no. 107, 1--25.

\bibitem{Kl:BSM}
Klimsiak, T. (2013). BSDEs with monotone generator and two
irregular reflecting barriers. {\em Bull. Sci. Math.} {\bf 137}
268--321.

\bibitem{Kl:JFA}
Klimsiak, T. (2014). Semi-Dirichlet forms, Feynman-Kac functionals 
and the Cauchy problem for semilinear parabolic equations. Available at
arXiv:1401.3643.

\bibitem{KR:JFA}
Klimsiak, T. and Rozkosz, A. (2013). Dirichlet forms and
semilinear elliptic equations with measure data. {\em J. Funct.
Anal.}  {\bf 265} 890--925.


\bibitem{KR:SM}
Klimsiak, T. and Rozkosz, A. (2013). Semilinear elliptic equations
with measure data and quasi-regular Dirichlet forms. Available at
arXiv:1307.0717.

\bibitem{LepeltierMatoussiXu}
Lepeltier, J.P., Matoussi, A. and Xu, M. (2005). Reflected
bakcward stochastic differential equations under monotonicity and
general increasing growth conditions. {\em Adv. in Appl. Probab.}
{\bf 37} 134--159.

\bibitem{Lepeltier}
Lepeltier, J.P. and Xu, M. (2007) Reflected BSDEs with two rcll
barriers. {\em ESAIM Probab. Stat.} {\bf 11} 3--22.

\bibitem{LLQ}
Liang, G., Lyons, T. and Qian, Z. (2011). Backward Stochastic
Dynamics on a Filtered Probability Space. {\em Ann. Probab.} {\bf
39} 1422--1448.

\bibitem{MeminSlominski}
M\`emin, J. and S\l omi\'nski, L. (1991).  Condition UT et
stabilit\`e en loi des solutions d'\`equations diff\`erentialles
stochastiques. In: {\em  S\`eminaire de Probabilit\`es XXV,
Lecture Notes in Math.} {\bf 1485}, Springer, Berlin.

\bibitem{NualartSchoutens}
Nualart, D. and Schoutens, W. (2001). Backward stochastic
differential equations and Feynman-Kac formula for L\`evy
processes, with applications in finance. {\em Bernoulli} {\bf 7}
761--776.

\bibitem{Peng}
Peng, S. (1999). Monotonic Limit Theorem of BSDE and Nonlinear
Decomposition Theorem of Doob-Meyers Type. {\em Probab. Theory
Related Fields} {\bf 113} 473--499.

\bibitem{PengXu}
Peng, S. and Xu, M. (2005). The Smallest g-Supermartingale and
Reflected BSDE with Single and Double L2 obstacles. {\em   Ann.
Inst. H. Poincar\'e Probab. Statist.}  {\bf 41} 605--630.


\bibitem{Protter}
Protter, P. (2004). {\em Stochastic Integration and Differential
Equations}. 2nd ed. Springer, Berlin.

\bibitem{RevuzYor}
Revuz, D. and Yor, M. (1991). {\em Continuous Martingales and
Brownian Motion.} Springer, Berlin.

\bibitem{RS:SPA}
Rozkosz, A. and S\l omi\'nski, L. (2012). $L^p$ solutions of
reflected BSDEs under monotonicity condition. {\em Stochastic
Process. Appl.} {\bf 122} 3875--3900.

\bibitem{Stoyanov}
Stoyanov, J. (1997). Counterexamples in Probability. 2ed., Wiley,
Chichester.

\bibitem{TangLi}
Tang, S. and Li, X. (1994). Necessary Conditions for Optimal
Control of Stochastics Systems with Random Jumps. {\em  SIAM J.
Control Optim.} {\bf 32} 1447--1475.

\end{thebibliography}
\end{document}